\newtheorem{mydef}{Definition}[section]
\newtheorem{theorem}{Theorem}[section]
\newtheorem{lemma}{Lemma}[section]
\newtheorem{remark}{Remark}[section]
\newtheorem{assumptions}[theorem]{Assumptions}
\newtheorem{cor}{Corollary}[section]
\newcommand{\fishu}{\scrI_U}
\newcommand{\entab}{\scrH_{U}}
\newcommand{\id}{\mathbf{id}}
\newcommand{\otgrad}{\nabla^{\mathcal{W}}}
\newcommand{\otcovcur}{\frac{\bD}{dt}}
\newcommand{\fish}{\mathscr{I}_U}
\newcommand{\vol}{\mathbf{vol}}
\newcommand{\tsp}{\mathbf{T}}
\newcommand{\ent}{\scrH_U}
\newcommand{\ric}{\mathbf{\mathfrak{Ric} } }
\newcommand{\entcost}{ \cT^{\sigma}_{U} }
\newcommand{\fwh}{h_f}
\newcommand{\bwh}{h_b}
\newcommand{\divg}{\mathbf{div}}
\newcommand{\inv}{\mathbf{m}}
\newcommand{\supp}{\text{supp}}
\newcommand{\hess}{\mathbf{Hess}^{\mathcal{W}}}
\newcommand{\cinfp}{\cC^{+}_{\infty}}
\newcommand{\cinf}{\cC_{\infty}}
\newcommand{\cinfc}{\cC^{b}_{\infty}}
\newcommand{\cinfcp}{\cC^{b,+}_{\infty}}
\newcommand{\cinfcs}{\cC^{c}_{\infty}}
\newcommand{\hQ}{\hat{\bQ}}
\newcommand{\levciv}{\overline{\nabla}}
\newcommand{\R}{\mathbb{R}}
\newcommand{\RD}{\mathbb{R}^d}
\newcommand{\N}{\mathbb{N}}
\newcommand{\cC}{\mathcal{C}}
\newcommand{\cP}{\mathcal{P}}
\newcommand{\cX}{\mathcal{X}}
\newcommand{\cT}{\mathcal{T}}
\newcommand{\tf}{\tilde{f}}
\newcommand{\tg}{\tilde{g}}
\newcommand{\scrE}{\mathscr{E}}
\newcommand{\scrF}{\mathscr{F}}
\newcommand{\scrH}{\mathscr{H}}
\newcommand{\scrI}{\mathscr{I}}
\newcommand{\scrU}{\mathscr{U}}
\newcommand{\scrL}{\mathscr{L}}
\newcommand{\bes}{\begin{equation*}}
\newcommand{\ees}{\end{equation*}}
\newcommand{\beas}{\begin{eqnarray*}}
\newcommand{\eeas}{\end{eqnarray*}}
\newcommand{\bea}{\begin{eqnarray}}
\newcommand{\eea}{\end{eqnarray}}
\newcommand{\be}{\begin{equation}}
\newcommand{\ee}{\end{equation}}
\newcommand{\veps}{\varepsilon}
\newcommand{\bd}{\mathbf{d}}
\newcommand{\bD}{\mathbf{D}}
\newcommand{\bT}{\mathbf{T}}
\newcommand{\bP}{\mathbf{P}}
\newcommand{\bQ}{\mathbf{Q}}
\newcommand{\bbl}{\begin{block}}
\newcommand{\ebl}{\end{block}}
\newcounter{mylabelcounter}
\newcommand{\labelText}[2]{%
#1\refstepcounter{mylabelcounter}%
\immediate\write\@auxout{%
  \string\newlabel{#2}{{1}{\thepage}{{\unexpanded{#1}}}{mylabelcounter.\number\value{mylabelcounter}}{}}%
}%
}
\author{Giovanni Conforti}
\address{D\'epartement de Math\'ematiques Appliqu\'ees, \'Ecole Polytechnique,
Route de Saclay, 91128, Palaiseau Cedex, France}
\email{giovanniconfort@gmail.com}
\begin{document}
\title[Schr\"odinger bridges, hot gas experiment and entropic transportation cost]{A second order equation for Schr\"odinger bridges with applications to the hot gas experiment and entropic transportation cost}

\begin{abstract}
	The \emph{Schr\"odinger problem} is obtained  by replacing the mean square distance with the relative entropy in the Monge-Kantorovich problem. It was first addressed by  Schr\"odinger as the problem of describing the most likely evolution of a large number of Brownian particles conditioned to reach an ``unexpected configuration". Its optimal value, the \textit{entropic transportation cost}, and its optimal solution, the \textit{Schr\"odinger bridge}, stand as the natural probabilistic counterparts to the transportation cost and displacement interpolation.  Moreover, they provide a natural way of lifting from the point to the measure setting the concept of Brownian bridge. In this article, we prove that the Schr\"odinger bridge solves a second order equation in the Riemannian structure of optimal transport. Roughly speaking, the equation says that its acceleration is the gradient of the Fisher information. Using this result, we obtain a fine quantitative description of the dynamics, and a new functional inequality for the entropic transportation cost, that generalize Talagrand's transportation inequality.
Finally, we study the convexity of the Fisher information along Schr\"odigner bridges, under the hypothesis that the associated \textit{reciprocal characteristic} is convex. The techniques developed in this article are also well suited to study the  \emph{Feynman-Kac penalisations} of Brownian motion.
\end{abstract}
\maketitle
\section{Introduction and statement of the main results}\label{sec:1}
 The \emph{Schr\"odinger problem}  \eqref{eq:SPs} is the problem of finding the best approximation in the relative entropy sense of a stationary dynamics $\bP$  under constraints  on the marginal laws. It originates from the early works \cite{Schr32,Schr} and is now an active field of research with ties to many others. Without being exhaustive, we mention  \cite{Foll88,follmer1997entropy,cattiaux1994minimization,dawson1990schrodinger,dawson1994multilevel} for connections with large deviations and \cite{DP91,Mik04,benamou2015iterative,leonard2012schrodinger,gentil2015analogy,chen2014relation,Wak89,gigli2017second}
 for the relations with optimal transport and stochastic control. In \cite{Kre88,Kre97,LeKre93,Th93,CruZa91,Zam86} Schr\"odinger bridges are used to develop Euclidean Quantum Mechanics (EQM) and second order calculus for diffusions, while
 in the article \cite{leonard2016lazy} they are employed to construct interpolations between probability measures on discrete spaces. We refer to \cite{chen2016optimalA,chen2016optimalB,Galichon2016Schr,solomon2015convolutional,li2017computations} for applications in engineering, economics and graphics.  In this article we prove that the \emph{Schr\"odinger bridge} $\hat{\bQ}$, i.e. the solution to \ref{eq:SPs},
solves a second order differential equation in the Riemannian structure of optimal transport and  obtain quantitative results for its dynamics. In particular, since the evolution of the empirical measure of $N$ Brownian particles whose initial and final configurations are known is described in the limit for $N\rightarrow + \infty$ by a Schr\"odinger bridge, our results  bring answers to the basic question (see also section \ref{sub:hotgas} )
\begin{itemize}
\item[\labelText{\textbf{Q}}{secondquest}] Suppose you observe the configuration at times $t=0,1$ of $N \gg 1$ independent Brownian particles. How well does their configuration at time $t\in (0,1)$  resemble the equilibrium configuration?
\end{itemize}

Moreover, we shall establish a new connection between the so called \emph{reciprocal characteristics} associated with a potential, and the convexity of the Fisher information along Schr\"odinger bridges. Although the main objective of this paper is to understand the dynamics of Schr\"odinger bridges from a probabilistic viewpoint, our results can be seen as ``stochastic" generalizations of well known results of optimal transport, such as Talagrand's inequality and displacement convexity of the entropy. Indeed, these classical results can be recovered through a small noise argument.

\subsection*{Organization of the paper}   
The paper is organized as follows: in section \ref{sec:1} we state the results; in section \ref{sec:2} we recall some notions of second order calculus in Wasserstein space, and in section \ref{sec:3} we provide proofs. Some technical lemmas are in the Appendix.

 \subsection{The Schr\"odinger problem}\label{sub:hotgas}
In this subsection we give an introduction to \ref{eq:SPs} and collect some useful facts.
 To formulate \ref{eq:SPs}, let $\Omega$ be the space of continuous paths over the time interval $[0,1]$ with values in a smooth complete connected Riemannian Manifold $(M,g)$ without boundary. We consider the law $\bP$ on $\Omega$ of the only stationary Markov measure\footnote{$\bP$ may not be a probability measure, but an infinite measure. This won't be a problem as long as $\bQ$ is a probability measure. The note \cite{leonard2014some} takes care of this issue in detail.
} for the generator
\bes
\scrL  = \sigma \Big( \frac{1}{2} \Delta - \nabla U \cdot \nabla \Big),
\ees
where $\sigma$ is a positive constant, $\Delta$ the Laplace-Beltrami operator and $U$ a smooth Lipschitz potential. 
The Schr\"odinger problem is the problem of finding the best approximation of $\bP$ in the relative entropy sense in the set of probabilities with prescribed marginals $\mu,\nu$  at times $t=0,1$.
 \be\label{eq:SPd}\tag{SPd}  \nonumber \min \scrH(\bQ | \bP) \quad 
 \bQ \in \cP(\Omega)\,, {X_0}_{\#} \bQ =\mu, {X_1}_{\# } \bQ= \nu ,
 \ee
 where $\cP(\Omega)$ is the space of probabilities over $\Omega$, $X_t(\omega) = \omega_t$ is the canonical projection map, $\#$ the push-forward and $\scrH( \cdot |\bP)$ the relative entropy functional.
 We refer to this formulation as the \emph{dynamic formulation}. The optimal measure  $\hat{\bQ}$ is  the \emph{Schr\"odinger  bridge}  \labelText{SB($\scrL,\mu,\nu$)}{schrbr} between $\mu$ and $\nu$.
The \emph{static formulation} is obtained by projecting onto the endpoint marginals. 
   \be\label{eq:SPs}\tag{SP}  \nonumber \min \scrH(\pi | (X_0,X_1)_{\#} \bP), \quad 
 \pi \in \Pi(\mu,\nu),  \ee
 where $\Pi(\mu,\nu)$ is the set of couplings of $\mu$ and $\nu$.
We call the optimal value $\entcost(\mu,\nu)$ of \ref{eq:SPd} the \emph{entropic transportation cost}. It is known \cite[Prop. 2.4]{LeoSch} that an optimal solution $\hat{\bQ}$ of \ref{eq:SPd} exists whenever $\entcost(\mu,\nu)<+\infty$ and that the two formulations are equivalent, see \cite{Foll88}: the Schr\"odinger bridge $\hat{\bQ} $ is obtained lifting the optimal coupling $\hat{\pi}$ of \ref{eq:SPs} to path space by mean of bridges. We have
\be\label{eq:bridgedec} \hat{\bQ}(\cdot ) = \int_{M \times M} \bP^{xy}(\cdot) \hat{\pi}(dx \, dy),  \ee 
where $\bP^{xy}$ is the \emph{xy bridge} of $\bP$ (\cite{LeoSch} for details). Using this decompositon, one can show that $\entcost(\mu,\nu)$ is also the optimal value of \ref{eq:SPs}, and therefore that the optimal values of \ref{eq:SPd} and \ref{eq:SPs} coincide. 

\subsubsection*{The $\emph{fg}$ representation of the optimal solution}
It has been shown at \cite[Th 2.8]{LeoSch} that, under some mild assumptions on $\mu,\nu$ and $\bP$, there exist two measurable functions $f,g: M \rightarrow \R_{\geq 0}$ such that $E_{\bP}(f(X_0)g(X_1))=1$ and the SB takes the form
\be\label{eq:fgdec}  \hat{\bQ} = f(X_0)g(X_1) \bP. \ee
 $f,g$ are found by solving the \emph{Schr\"odinger system}, see \cite{LeoSch},\cite{rus93}. In the rest of the article we will often deal with the functions $f_t,g_t$ defined for $t \in [0,1]$, by
\be\label{def:fg} f_t(x) := E_{\bP}[f(X_0)|X_t=x], \quad g_t(x) := E_{\bP}[g(X_1)|X_t=x] .   \ee

The following ``hot gas experiment" provides an heuristic motivation for the formulation of \ref{eq:SPs}, and helps building intuition.
\subsubsection*{Hot gas experiment}
 At time $t=0$ we are given $N$ independent particles  $(X^i_t)_{t \leq 1, i \leq N}$ whose configuration $\mu$ is
\[ \mu := \frac{1}{N} \sum_{i=1}^N \delta_{X^i_0} \quad .\] 
We then let the particles travel independently following the Langevin dynamics for $\scrL$ and look at their configuration $\nu$ at $t=1$,
 \[ \nu:= \frac{1}{N}\sum_{i=1}^N \delta_{X^i_1} \quad . \]
If the Langevin dynamics has good ergodic properties and $N$ is very large, one expects $\nu$ to be very close to the solution at time $1$ of the Fokker-Planck equation associated with the generator $\scrL$ and the initial datum $\mu$. However, although very unlikely, it is still possible to observe an \emph{unexpected configuration}\footnote{Schr\"odinger writes in \cite{Schr32} \emph{``un \'ecart spontan\'e et considerable par rapport \`a cette uniformit\'e" }} . Schr\"odinger's question is to find the most likely evolution of the particle system conditionally on the fact that such a rare event happened. Letting $N \rightarrow + \infty$ and using Sanov's  Theorem, this question can rigorously be formulated as \ref{eq:SPd} (see also \cite[sec.6]{LeoSch}). The Schr\"odinger problem can be viewed as a stochastic counterpart to the ``lazy gas experiment" of optimal transport \cite[Ch. 16]{villani2008optimal}. Indeed
\begin{itemize}
	\item Particles choose their final destination minimizing the relative entropy instead of the mean square distance.
	\item Particles travel along Brownian bridges instead of geodesics, see \eqref{eq:bridgedec}.
\end{itemize}
We refer to \cite[sec. 6]{leonard2013convexity} for an extensive treatment of this analogy.
Quite remarkably, the description of the hot gas experiment we gave is very similar to the original formulation of the problem that Schr\"odinger proposed back in 1932, (see \cite{Schr32}) when the modern tools of probability theory were not available.

\subsection{An equation for the Schr\"odinger bridge.}
 The first result of this article is that the marginal flow $(\mu_t)$ of \nameref{schrbr} solves a second order differential equation, when viewed as a curve in the space $\cP_2(M)$ of probability measures with finite second moment endowed  with the Riemannian-like structure of optimal transport, \cite{otto2001geometry,sturm2006geometry,lott2009ricci}. Let us, for the moment, provide the minimal notions to state the result; we  postpone to Section 2 a summary based on \cite{ambrosio2013user,giglisecond} of second order calculus in $\cP_2(M)$. As said, we view $(\mu_t)$ as a curve in a kind of Riemannian manifold. Therefore, provided it is regular enough, we can speak of its velocity and acceleration. If $(\mu_t)$ is a \textit{regular curve} (Def.\ref{def:reg}), starting from the continuity equation
\[\partial_t \mu_t + \nabla \cdot(\mu_t v_t)=0, \]
 one can define a Borel family of vector fields, which is the \textit{velocity field} of $(\mu_t)$. Furthermore, if $v_t$ is \textit{absolutely continuous} along $(\mu_t)$ (Def \ref{def:smooth}) we can compute its \textit{covariant derivative} (Def. \ref{def:covder}) $\frac{\bD}{dt} v_t$, which plays the role of the acceleration of $(\mu_t)$. We consider the \textit{Fisher information} functional $\fish$ on $\cP_2(M)$
\be\label{eq:fishdef} \fish(\mu) = \begin{cases} \int_M |\nabla (\log \mu + 2 U) |^2 d \mu \quad & \mbox{if $\mu \ll \vol $} \\ + \infty \quad & \text{otherwise}  \end{cases}\ee
In the definition above, and in the rest of the paper we make no distinction between a measure $\mu$ and its Radon-Nykodym density $\frac{d \mu}{ d \vol}$ w.r.t. the volume measure $\vol$ on $M$. Moreover, if $v$ is a vector field on $M$, we abbreviate $|v(z)|_{\mathbf{T}_zM}$ with $|v|$. In Definition \ref{def:otgrad} we provide the definition of gradient of a functional over $\cP_2(M)$. According to this notion, the gradient of $\fish$ is the vector field
\be\label{eq:fishgrad} \otgrad \fish (\mu) := \nabla \Big( - |\nabla \log \mu|^2 - 2 \Delta \log \mu   + 8 \scrU \Big) , \quad \text{where} \quad \scrU = \frac{1}{2}(|\nabla U|^2- \Delta U).
\ee
 The content of the next Theorem is that under some suitable regularity assumptions, the acceleration 
of $(\mu_t)$ is $\otgrad \fish (\mu_t)$. 
 \begin{assumptions}\label{ref:mainass}
$M,U,\mu,\nu$ satisfy one of the following 
\begin{enumerate}[(A)]
\item  $M$ is compact, $U \in \cC_{\infty}(M)$, $\ent(\mu),\ent(\nu)<+\infty $. 
\item $M =\RD$, $U(z) = \frac{\alpha}{2} |z|^2  $ for some $\alpha>0$, $\mu,\nu \ll \vol $ have compact support and bounded density against $\vol$. 
\end{enumerate}
\end{assumptions}
\begin{theorem}\label{thm:2ndorderODE}
Let $M,U,\mu,\nu$ be such that Assumption \ref{ref:mainass} holds. Then the marginal flow $(\mu_t)$ of \nameref{schrbr} is a regular curve and its velocity field $(v_t)$ is absolutely continuous.  Moreover, $(\mu_t)$ satisfies the equation
\be\label{eq:2ndorderODE}
\forall \, 0<t<1, \quad	\frac{\bD}{dt}  v_t = 	 \frac{\sigma^2}{8}\otgrad  \fish 	(\mu_t),
\ee
where $\frac{\bD}{dt} v_t$ is the \emph{covariant derivative} of $(v_t)$ along $(\mu_t)$.
\end{theorem}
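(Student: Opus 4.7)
My plan is to exploit the $fg$-representation \eqref{eq:fgdec} to obtain an explicit gradient velocity field for $(\mu_t)$, and then to reduce the computation of the acceleration to a Hamilton--Jacobi-type identity on its potential. Under Assumption \ref{ref:mainass}, the Schr\"odinger system produces strictly positive functions $f,g$ with $\hat{\bQ} = f(X_0)g(X_1)\bP$. Define $f_t, g_t$ as in \eqref{def:fg}. The generator $\scrL$ is symmetric with respect to $\pi := e^{-2U}\vol$, so $\bP$ is reversible, and by the Markov property
\[
\partial_t f_t = \scrL f_t, \qquad \partial_t g_t = -\scrL g_t,\qquad \mu_t = f_t g_t \pi.
\]
A direct calculation, using that $\nabla \log \pi = -2\nabla U$ and hence $\tfrac{1}{2}\nabla\cdot(\pi h) = \pi\bigl(\tfrac{1}{2}\nabla\cdot h - \nabla U \cdot h\bigr)$ for any vector field $h$, rewrites $\partial_t \mu_t$ as $-\nabla\cdot(\mu_t v_t)$ with
\[
v_t = \nabla \phi_t, \qquad \phi_t := \tfrac{\sigma}{2}\log(g_t/f_t),
\]
which identifies a smooth gradient velocity field for $(\mu_t)$.

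Next, I invoke the Otto-calculus identity recalled in Section \ref{sec:2}: for a sufficiently regular curve with gradient velocity $v_t = \nabla \phi_t$, the covariant derivative equals
\[
\frac{\bD}{dt} v_t \;=\; \nabla\!\left(\partial_t \phi_t + \tfrac{1}{2}|\nabla \phi_t|^2\right).
\]
Substituting the explicit $\phi_t$ and writing $a_t := \log f_t$, $b_t := \log g_t$ so that $a_t + b_t = \log \mu_t + 2U + \mathrm{const}$, the PDEs above give after a few lines of algebra
\[
\partial_t \phi_t + \tfrac{1}{2}|\nabla\phi_t|^2 \;=\; \tfrac{\sigma^2}{8}\Bigl(-|\nabla \log \mu_t|^2 - 2\Delta \log \mu_t + 8\,\scrU\Bigr),
\]
with $\scrU = \tfrac{1}{2}(|\nabla U|^2-\Delta U)$. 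The right-hand side is exactly the potential in \eqref{eq:fishgrad}; taking $\nabla$ yields \eqref{eq:2ndorderODE}.

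The main obstacle is not the algebra above but the rigorous verification that $(\mu_t)$ is a regular curve and that $v_t$ is absolutely continuous along it, in the precise sense of Definitions \ref{def:reg}--\ref{def:smooth}. Under (A) this is obtained from the interior parabolic regularisation of the semigroup of $\scrL$ on the compact manifold $M$, which upgrades the bounded terminal data $f,g$ of the Schr\"odinger system to $\cC^\infty$ densities bounded away from $0$ and $\infty$ on each $[\delta,1-\delta]\subset(0,1)$, together with uniform bounds on all spatial derivatives and on $\partial_t v_t$. Under (B) the explicit Mehler kernel provides the analogous Gaussian estimates on $f_t, g_t$ and their space--time derivatives, controlling the compactly supported densities $\mu, \nu$ and the resulting marginals. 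These estimates are technical but standard; I would collect them as lemmas in the appendix and then feed them into the regularity statements of Section \ref{sec:2} to legitimate the computations of the preceding paragraph.
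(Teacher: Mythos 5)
Your proposal follows essentially the same route as the paper's proof: identify the velocity field $v_t=\tfrac{\sigma}{2}\nabla(\log g_t-\log f_t)$ from the $fg$-representation, invoke the covariant-derivative formula $\tfrac{\bD}{dt}v_t=\nabla(\partial_t\phi_t+\tfrac12|\nabla\phi_t|^2)$ (Lemma \ref{covident}), reduce to an HJB computation using the forward/backward Kolmogorov equations and the decomposition $\log f_t+\log g_t=\log\mu_t+2U$, and relegate the quantitative regularity (parabolic smoothing on a compact $M$ in case (A), explicit Mehler-kernel estimates in case (B)) to appendix lemmas; the paper does exactly this, merely phrasing the HJB step in terms of the auxiliary functions $\tilde f_t=e^{-U}f_t$, $\tilde g_t=e^{-U}g_t$ rather than $\log f_t$, $\log g_t$ directly. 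Your algebra checks out and the technical plan matches Lemmas \ref{lm:speedbound}--\ref{lm:regcurv} and \ref{lm:logest}--\ref{lm:compactcase}.
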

Assumption \ref{ref:mainass} essentially says that either $M$ is compact or $\bP$ is a stationary Ornstein-Uhlenbeck process on $\RD$. We impose this strong assumptions because we do not want to assume any regularity on the solution to \ref{eq:SPd}, but only on the data of the problem. If we accept to do so, Assumption \ref{ref:mainass} can be dropped. 
\begin{theorem}\label{thm:2ndorderODEb}
Assume that the dual representation \eqref{eq:fgdec} of \nameref{schrbr} holds, that the marginal flow $(\mu_t)$ is such that 
\be\label{eq:2ndorderODEb1}  \forall \, \veps \in (0,1) \quad \sup_{t \in [\veps,1-\veps] } |\otgrad \fish(\mu_t)|_{L^2_{\mu_t}}<+\infty,  \ee
and that $f_t,g_t$ are such that for all $\veps \in (0,1)$
\be\label{eq:2ndorderODEb2}  \sup_{t \in  [\veps,1-\veps]} |\nabla (\log g_t-\log f_t)|_{L^2_{\mu_t}}<+\infty ,\quad \sup_{\substack{t \in  [\veps,1-\veps]\\ x \in M}} |\mathbf{Hess}_x (\log g_t - \log f_t) |_{op}<+\infty. \ee
Then the marginal flow $(\mu_t)$  is a regular curve and its velocity field $(v_t)$ is absolutely continuous. Moreover, $(\mu_t)$ satisfies the equation
\be\label{eq:2ndorderODEb}
\forall \, 0<t<1, \quad	\frac{\bD}{dt}  v_t = 	 \frac{\sigma^2}{8} \otgrad  \fish (\mu_t),
\ee
where $\frac{\bD}{dt} v_t$ is the \emph{covariant derivative} of $(v_t)$ along $(\mu_t)$.   
\end{theorem}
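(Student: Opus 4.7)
The plan is to use the dual representation \eqref{eq:fgdec} to obtain an explicit formula for the velocity field of $(\mu_t)$ as the gradient of a time-dependent potential, and then to evaluate its Wasserstein covariant derivative via the Hamilton--Jacobi-type identity that characterises $\frac{\bD}{dt}$ along gradient curves. The martingale properties of $f_t(X_t)$ and $g_t(X_t)$ under $\bP$, combined with reversibility, yield the heat-type equations $\partial_t f_t = \scrL f_t$ and $\partial_t g_t = -\scrL g_t$. A direct computation shows that the density $\mu_t = f_t g_t e^{-2U}$ satisfies the continuity equation with velocity field
\[ v_t = \nabla \phi_t, \qquad \phi_t := \tfrac{\sigma}{2}(\log g_t - \log f_t). \]
Because $v_t$ is a gradient, the results recalled in Section~\ref{sec:2} identify its covariant derivative along $(\mu_t)$ as $\frac{\bD}{dt} v_t = \nabla\bigl(\partial_t \phi_t + \tfrac{1}{2}|\nabla\phi_t|^2\bigr)$, provided sufficient regularity is in force.

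Next I translate the hypotheses \eqref{eq:2ndorderODEb1}--\eqref{eq:2ndorderODEb2} into the abstract regularity properties of Section~\ref{sec:2}. The uniform $L^2_{\mu_t}$-bound on $\nabla(\log g_t - \log f_t)$ is precisely what is needed to establish that $(\mu_t)$ is a regular curve in the sense of Def.~\ref{def:reg} with tangent field $v_t$. The uniform operator-norm bound on $\mathbf{Hess}_x(\log g_t - \log f_t)$ gives spatial Lipschitz control on $v_t$ on every interval $[\veps,1-\veps]$; together with a standard argument producing $\partial_t v_t$ in $L^2_{\mu_t}$ from the two underlying PDEs, this furnishes the absolute continuity of the velocity field in the sense of Def.~\ref{def:smooth}.

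The algebraic core of the argument is then a pointwise Hamilton--Jacobi identity. Using $\partial_t \log g_t = -\scrL g_t / g_t$ and $\partial_t \log f_t = \scrL f_t / f_t$, the elementary rewriting $\scrL h / h = \tfrac{\sigma}{2}\bigl(\Delta\log h + |\nabla\log h|^2\bigr) - \sigma\nabla U\cdot\nabla\log h$, and the factorisation $\log \mu_t = \log f_t + \log g_t - 2U$, a short computation produces
\[
\partial_t\phi_t + \tfrac{1}{2}|\nabla\phi_t|^2 \;=\; \tfrac{\sigma^2}{8}\bigl(-|\nabla\log\mu_t|^2 - 2\Delta\log\mu_t + 8\scrU\bigr) + C(t),
\]
where $C(t)$ is an additive spatial constant. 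Taking gradients eliminates $C(t)$ and, by \eqref{eq:fishgrad}, yields $\tfrac{\sigma^2}{8}\otgrad\fish(\mu_t)$ on the right; on the left one obtains $\frac{\bD}{dt} v_t$, establishing \eqref{eq:2ndorderODEb}.

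The main obstacle will be justifying that the pointwise identity above indeed descends to the Wasserstein covariant derivative in the non-smooth setting of Theorem~\ref{thm:2ndorderODEb}: the characterisation $\frac{\bD}{dt}\nabla\phi_t = \nabla(\partial_t\phi_t + \tfrac{1}{2}|\nabla\phi_t|^2)$ is most cleanly formulated for smooth gradient fields, so one expects to regularise $\phi_t$ (either by time mollification or by an approximation of $f,g$ at the level of the Schr\"odinger system), compute on the approximants, and pass to the limit. The $L^2_{\mu_t}$-bound on $\otgrad\fish(\mu_t)$ in \eqref{eq:2ndorderODEb1} is the quantitative assumption that permits closing this limiting procedure, while the Lipschitz bound from \eqref{eq:2ndorderODEb2} ensures that the chain rule producing $|\nabla\phi_t|^2$ is valid in the limit.
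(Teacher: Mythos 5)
Your proposal follows the same route as the paper: write $v_t=\nabla\phi_t$ with $\phi_t=\tfrac{\sigma}{2}(\log g_t-\log f_t)$ via the $fg$ decomposition, verify the continuity equation, perform the Hamilton--Jacobi computation to obtain $\partial_t v_t+\tfrac12\nabla|v_t|^2=\tfrac{\sigma^2}{8}\otgrad\fish(\mu_t)$, and identify the left-hand side with $\tfrac{\bD}{dt}v_t$. The one substantive misstep is in your final paragraph, where you treat the passage from this pointwise PDE identity to the covariant derivative as an ``obstacle'' requiring mollification of $\phi_t$ and a limiting argument, with hypothesis \eqref{eq:2ndorderODEb1} serving to close that limit. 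That is not what the hypothesis does. Under \eqref{eq:fgdec}, Malliavin calculus already gives $f_t,g_t\in\cinfp$ on every $D_\veps$, so $v_t$ is a genuinely smooth vector field and no regularisation is needed. The paper instead invokes Lemma \ref{covident}, whose hypothesis \eqref{eq:covident1} is a uniform $L^2_{\mu_t}$ bound on $\partial_t v_t+\levciv_{v_t}v_t$; since the HJB computation shows this quantity equals $\tfrac{\sigma^2}{8}\otgrad\fish(\mu_t)$, hypothesis \eqref{eq:2ndorderODEb1} is \emph{precisely} that bound, and Lemma \ref{covident} then delivers both the absolute continuity of $(v_t)$ and the formula $\tfrac{\bD}{dt}v_t=\partial_t v_t+\tfrac12\nabla|v_t|^2$ in one stroke.

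You also slightly jumble the roles of the two conditions in \eqref{eq:2ndorderODEb2}. The $L^2_{\mu_t}$ bound on $\nabla(\log g_t-\log f_t)$ is what identifies this gradient field as \emph{the} velocity field and supplies \eqref{eq:myreg1}; the Hessian operator-norm bound supplies $\mathrm{L}(v_t)<\infty$, i.e.\ \eqref{eq:myreg2}. Both are needed for regularity in the sense of Definition \ref{def:reg} — the $L^2$ bound alone does not make $(\mu_t)$ a regular curve, contrary to what you state. Absolute continuity of the velocity field is then a \emph{conclusion} of Lemma \ref{covident}, not something you need to manufacture separately via a ``standard argument producing $\partial_t v_t$ in $L^2_{\mu_t}$.'' These are bookkeeping issues rather than a gap in the core computation, but they are exactly the points where the paper's use of Lemma \ref{covident} keeps the argument tight.
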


 After some manipulations, it is possible to reinterpret the fluid dynamic formulations of \ref{eq:SPd} obtained in
   \cite{gentil2015analogy} and \cite{chen2014relation} as variational problems in the Riemannian manifold of optimal transport: equation \eqref{eq:2ndorderODEb} is then the associated Euler-Lagrange equation. Thus, in principle we could link \eqref{eq:2ndorderODEb} with the theory of Hamiltonian system in $\cP_2(M)$, see \cite{ambrosio2008hamiltonian}. However, our proof is based on probabilistic arguments, and does not use that framework.   
By changing the sign in the right hand side of \eqref{eq:2ndorderODE}, one gets a nice connection with the Schr\"odinger equation, see \cite{von2012optimal,chow2017discrete}. The \emph{gradient flow} of the Fisher information has been studied in \cite{gianazza2009wasserstein} as a model for the quantum drift diffusion. Using the hot gas experiment, we can give an heuristic for equation \eqref{eq:2ndorderODE} to hold. The marginal flow $(\mu_t)$ of SB models the empirical measure of a particle system which minimizes the relative entropy on path space while going from $\mu$ to $\nu$. Therefore, if there was no constraint on the final end point, $(\mu_t)$ would be the gradient flow of the \emph{marginal entropy} started at $\mu$
\be\label{eq:gflowent} v_t  = - \frac{\sigma}{2}\otgrad \ent (\mu_t), \quad \mu_0 = \mu, \ee
where $v_t$ is the velocity field of $\mu_t$ and $\scrH_{U}$ the relative entropy on $\cP_2(M)$:
\be\label{eq:ent}
\ent(\mu) = \begin{cases} \int_M (\log \mu + 2U) d\mu \quad & \mbox{if $\mu \ll \vol $}  \\  + \infty & \mbox{otherwise}  \end{cases}
\ee
If we differentiate once more this relation to be allowed to impose the terminal condition, we formally obtain 
\beas \frac{\bD}{dt} v_t &\stackrel{\eqref{eq:gflowent}}{=}& -\frac{\sigma}{2} \frac{\bD}{dt} \otgrad \ent(\mu_t) \\
&=&  -\frac{\sigma}{2}\hess_{\mu_t} \ent (v_t) \\ &\stackrel{\eqref{eq:gflowent}}{=}&  \frac{\sigma^2}{4} \hess_{\mu_t} \ent (\otgrad \ent )  \\ &=& \frac{\sigma^2}{8} \otgrad \fish (\mu_t),\eeas

where to get the last equation we used the fact that the Fisher information is the squared norm of the gradient of the entropy,
 \[\fish(\mu) = |\otgrad \ent(\mu)|^2_{L^2_{\mu}} .\]

Theorem \ref{thm:2ndorderODE} gives an answer to the problem of determining what second order equation should the bridge of a diffusion satisfy. Other authors (see e.g. \cite{Kre88,Kre97,CruZa91,Th93,Zam86}) have proven results in this direction. In this respect, equation \eqref{eq:2ndorderODE} has some nice features. The first one is that the acceleration we consider here is a ``true" acceleration, in the sense that it can be constructed as the covariant derivative associated with a Riemannian structure. Moreover, as we shall see in Theorem \ref{thm:entropybound}, using the Riemannian formalism we can obtain quantitative results for the dynamics of Schr\"odinger bridges. The following kind of conservation law is also immediately derived: it can be compared with similar results obtained in the above mentioned articles.
\begin{cor}\label{cor:encons}
	If either Theorem \ref{thm:2ndorderODE} or Theorem \ref{thm:2ndorderODEb} holds,  there exists a finite constant $c(\mu,\nu)$ such that
\be\label{eq:encons1}	
\forall \, 0 < t < 1, \quad \frac{1}{\sigma^2} |v_t|^2_{\tsp_{\mu_t}} - \frac{1}{4} \fish(\mu_t) =c(\mu,\nu),
\ee
where $|\cdot|_{\tsp_{\mu_t}} = |\cdot |_{L^2_{\mu_t}}$ is the norm taken in the tangent space at $\mu_t$ (see section 2.3 for details).
\end{cor}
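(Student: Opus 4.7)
The proof is a direct translation to the Wasserstein setting of the classical energy conservation for a Newtonian system $\ddot{x} = \nabla V(x)$, which preserves $\frac{1}{2}|\dot{x}|^2 - V(x)$. Here the ``force'' is $\frac{\sigma^2}{8}\otgrad \fish$, so the natural candidate invariant is exactly
\[ E(t) := \frac{1}{\sigma^2}|v_t|^2_{\tsp_{\mu_t}} - \frac{1}{4}\fish(\mu_t), \]
and the plan is to show $E'(t) \equiv 0$ on $(0,1)$ by means of two chain rules from the second order calculus in $\cP_2(M)$ recalled in Section~\ref{sec:2}.

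First, since $(\mu_t)$ is regular and $(v_t)$ is absolutely continuous along it, which is granted by Theorem~\ref{thm:2ndorderODE} or \ref{thm:2ndorderODEb}, the standard identity
\[ \frac{d}{dt}|v_t|^2_{\tsp_{\mu_t}} = 2\,\langle v_t, \tfrac{\bD}{dt} v_t\rangle_{\tsp_{\mu_t}} \]
is available. Second, the chain rule for the functional $\fish$ along the regular curve $(\mu_t)$ reads
\[ \frac{d}{dt}\fish(\mu_t) = \langle \otgrad \fish(\mu_t), v_t\rangle_{\tsp_{\mu_t}}. \]
Substituting equation \eqref{eq:2ndorderODE} (respectively \eqref{eq:2ndorderODEb}) into the first identity and combining with the second yields
\[ E'(t) = \frac{2}{\sigma^2}\cdot\frac{\sigma^2}{8}\,\langle v_t, \otgrad \fish(\mu_t)\rangle_{\tsp_{\mu_t}} - \frac{1}{4}\langle \otgrad \fish(\mu_t), v_t\rangle_{\tsp_{\mu_t}} = 0, \]
so that $E(t)\equiv c(\mu,\nu)$ on $(0,1)$. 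Finiteness of $c(\mu,\nu)$ follows by evaluating at any single $t_0 \in (0,1)$, since under the hypotheses of either theorem both $|v_{t_0}|_{L^2_{\mu_{t_0}}}$ and $\fish(\mu_{t_0})$ are finite.

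The main obstacle is to rigorously justify the chain rule for $\fish$ along $(\mu_t)$, since $\fish$ is neither continuous nor bounded below on $\cP_2(M)$ in general. In the setup of Theorem~\ref{thm:2ndorderODEb} the explicit control \eqref{eq:2ndorderODEb1} places $\otgrad \fish(\mu_t)$ in $L^2_{\mu_t}$ uniformly on every $[\veps,1-\veps]$, and combined with the smoothness of $\mu_t$ supplied by the representation \eqref{eq:fgdec} this is enough to differentiate under the integral in \eqref{eq:fishdef}. Under Assumption~\ref{ref:mainass} the analogous regularity of the marginal flow should be available from intermediate estimates in the proof of Theorem~\ref{thm:2ndorderODE}, so that the same chain rule applies and the conservation law follows identically.
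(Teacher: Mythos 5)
Your argument is correct and it is essentially the intended one: the paper states the corollary as ``immediately derived'' from the second-order equation without writing out a proof, and the standard energy-conservation computation you give is precisely that derivation. Concretely, you combine the compatibility of the metric \eqref{eq:metcomp} for $\tfrac{d}{dt}|v_t|^2_{\tsp_{\mu_t}}$ with the chain rule $\tfrac{d}{dt}\fish(\mu_t)=\langle\otgrad\fish(\mu_t),v_t\rangle_{\tsp_{\mu_t}}$ (the paper invokes the same chain rule for $\fishu$ in the proof of Theorem~\ref{thm:fishdecay}), substitute \eqref{eq:2ndorderODE}, and cancel; the arithmetic checks out. Your remark about rigorously justifying the chain rule for $\fish$ is fair; an alternative that stays entirely inside the paper's toolkit is to write $\fish(\mu_t)=|\otgrad\ent(\mu_t)|^2_{\tsp_{\mu_t}}$ via \eqref{eq:fishent}, differentiate with \eqref{eq:metcomp}, use $\tfrac{\bD}{dt}\otgrad\ent(\mu_t)=\hess_{\mu_t}\ent(v_t)$ and the self-adjointness of the Hessian together with \eqref{eq:entropybound2}, which lands on the same identity while leaning only on Lemma~\ref{covident} and the regularity already established in Lemma~\ref{lm:regcurv}.
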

\subsection{Quantitative results for Schr\"odinger bridges}
 Going back once more to the hot gas experiment is useful to give a qualitative description of the dynamics of SB. Indeed, the marginal flow $(\mu_t)$ is the empirical measure of a particle system that, at the same time
\begin{itemize}
\item has to minimize entropy: this means that particles are willing to arrange according to the equilibrium configuration for the Langevin dynamics, which we denote $\inv$ (recall that $d\inv = \exp(-2U)d\vol$).
\item has to reach an \textit{unexpected} final configuration $\nu$, which looks very different from $\inv$.
\end{itemize}
Thus, we expect the dynamics to be divided into two phases. In the first one, entropy minimization dominates and $\mu_t$ relaxes towards $\inv$. In the second phase, the necessity to attain the configuration $\nu$ at $t=1$ prevails: particles start arranging according to $\nu$ and $(\mu_t)$ drifts away from $\inv$.  From this sketch, it is clear that \nameref{secondquest} is the crucial question to address when studying the dynamics of Schr\"odinger bridges. Our answer is in the following Theorem, where we show that the Bakry \'Emery condition is equivalent to an upper bound for the relative entropy  of $\mu_t$ w.r.t. $\inv$, under the additional assumption that $M$ is compact. The proofs we present are also \emph{formally} correct in the non compact case, and we expect the compactness assumption not to be necessary. We make use of it to justify some integration by parts under $\inv$. 
\begin{theorem}\label{thm:entropybound}
Let $M$ be compact and $\scrL = \frac{1}{2} \Delta -\nabla U \cdot \nabla$. The following are equivalent:
\begin{enumerate}[(i)]
\item The Bakry \'Emery condition
\be\label{eq:Bakem}  \forall x \in M, \quad \ric_x +2 \mathbf{Hess}_x\, U \geq \lambda \, \id \ee  
\item For any $\mu,\nu$ such that $\ent(\mu),\ent(\nu)<+\infty$ and any $\sigma>0$, the marginal flow of $SB(\mu,\nu,\sigma \scrL)$ is such that
\bea\label{eq:entropybound}
\nonumber \forall t \in [0,1] \quad \entab(\mu_t)  \leq \frac{1-\exp(- \sigma\lambda(1-t))}{ 1 -\exp(-  \sigma \lambda )}\entab(\mu)+ \frac{1-\exp(- \sigma \lambda t)}{ 1 -\exp(- \sigma \lambda )}\ent(\nu) \\
-  \frac{\cosh(\frac{\sigma\lambda }{2})- \cosh(-\sigma\lambda(t-\frac{1}{2}))}{\sinh(\frac{\sigma\lambda }{2})}\entcost(\mu,\nu).
\eea	
\end{enumerate}
\end{theorem}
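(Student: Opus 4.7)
My plan is to reduce the direction (i)$\Rightarrow$(ii) to a sharp comparison problem for two scalar quantities attached to the $fg$--decomposition~\eqref{eq:fgdec} of $\hat{\bQ}$. Setting
\[ E_+(t):=\int|\nabla\log g_t|^2\,d\mu_t, \qquad E_-(t):=\int|\nabla\log f_t|^2\,d\mu_t,\]
the identity $\log\mu_t+2U=\log f_t+\log g_t$ together with $v_t=\tfrac{\sigma}{2}\nabla(\log g_t-\log f_t)$ gives by direct chain rule $\tfrac{d}{dt}\ent(\mu_t)=\tfrac{\sigma}{2}(E_+(t)-E_-(t))$. Moreover, since under $\hat{\bQ}$ the forward drift differs from that of $\bP$ by exactly $\sigma\nabla\log g_t$, Girsanov's formula combined with the stationarity of $\bP$ produces the two energy--cost identities
\[ \entcost(\mu,\nu)=\ent(\mu)+\frac{\sigma}{2}\int_0^1 E_+(s)\,ds=\ent(\nu)+\frac{\sigma}{2}\int_0^1 E_-(s)\,ds,\]
the second one by time--reversing the bridge.

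The central step, and the main obstacle, is to derive from Bakry--\'Emery the monotonicity differential inequalities $E_+'(t)\ge \sigma\lambda E_+(t)$ and $E_-'(t)\le -\sigma\lambda E_-(t)$. I would combine (a) the Fokker--Planck equation for $\mu_t$ with drift $b_t=\sigma\nabla(\log g_t-U)$, which gives $\tfrac{d}{dt}\int\varphi\,d\mu_t=\int(\partial_t+b_t\cdot\nabla+\tfrac{\sigma}{2}\Delta)\varphi\,d\mu_t$, with (b) the backward Hamilton--Jacobi--Bellman equation $\partial_t\alpha_t+\sigma\scrL_0\alpha_t+\tfrac{\sigma}{2}|\nabla\alpha_t|^2=0$ for $\alpha_t:=\log g_t$, where $\scrL_0=\tfrac12\Delta-\nabla U\cdot\nabla$. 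The specific interaction of (a) and (b) forces all first--order terms in $\nabla\alpha_t$ in the pointwise expansion of $(\partial_t+b_t\cdot\nabla+\tfrac{\sigma}{2}\Delta)|\nabla\alpha_t|^2$ to cancel, leaving exactly the Bochner $\Gamma_2$ form
\[\sigma|\mathbf{Hess}\,\alpha_t|^2+\sigma(\ric+2\mathbf{Hess}\,U)(\nabla\alpha_t,\nabla\alpha_t)\ge \sigma\lambda|\nabla\alpha_t|^2.\]
Integration against $\mu_t$ yields $E_+'\ge \sigma\lambda E_+$; compactness of $M$ justifies the boundary--free integrations by parts. The estimate for $E_-$ then follows by time--reversing the bridge, which exchanges $f\leftrightarrow g$ while preserving the setup since $\bP$ is reversible.

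With the two ODIs in hand, the conclusion is a deterministic extremal problem in one real variable. The inequality $E_+'\ge\sigma\lambda E_+$ is equivalent to $s\mapsto e^{-\sigma\lambda s}E_+(s)$ being non--decreasing, and coupling this with the normalization $\int_0^1 E_+=\tfrac{2(\entcost-\ent(\mu))}{\sigma}$ by bounding $\int_0^t E_+$ and $\int_t^1 E_+$ in terms of $E_+(t)$ and eliminating the latter gives the sharp inequality
\[\int_0^t E_+(s)\,ds\le \frac{e^{\sigma\lambda t}-1}{e^{\sigma\lambda}-1}\int_0^1 E_+(s)\,ds,\]
with equality on $E_+(s)\propto e^{\sigma\lambda s}$; symmetrically $\int_0^t E_-(s)\,ds\ge \tfrac{1-e^{-\sigma\lambda t}}{1-e^{-\sigma\lambda}}\int_0^1 E_-(s)\,ds$. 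Substituting into $\ent(\mu_t)=\ent(\mu)+\tfrac{\sigma}{2}\int_0^t(E_+-E_-)$ produces
\[\ent(\mu_t)\le \ent(\mu)+(\entcost-\ent(\mu))\frac{e^{\sigma\lambda t}-1}{e^{\sigma\lambda}-1}-(\entcost-\ent(\nu))\frac{1-e^{-\sigma\lambda t}}{1-e^{-\sigma\lambda}},\]
which reorganises into $A(t)\ent(\mu)+B(t)\ent(\nu)-C(t)\entcost$ upon using the identities $\tfrac{e^{\sigma\lambda}-e^{\sigma\lambda t}}{e^{\sigma\lambda}-1}=\tfrac{1-e^{-\sigma\lambda(1-t)}}{1-e^{-\sigma\lambda}}=A(t)$ and $\cosh(\tfrac{\sigma\lambda}{2})-\cosh(\sigma\lambda(\tfrac12-t))=2\sinh(\tfrac{\sigma\lambda t}{2})\sinh(\tfrac{\sigma\lambda(1-t)}{2})$.

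For the converse (ii)$\Rightarrow$(i) I would pass to the small--noise limit $\sigma\to 0^+$ with $\mu,\nu$ fixed: standard $\Gamma$--convergence results give $\sigma\entcost(\mu,\nu)\to\tfrac12 W_2^2(\mu,\nu)$ and convergence of $\mu_t$ to the displacement interpolant, while $A(t)\to 1-t$, $B(t)\to t$, $C(t)/\sigma\to \lambda t(1-t)$. In the limit, \eqref{eq:entropybound} degenerates into the $\lambda$--geodesic convexity of $\ent$ on $(\cP_2(M),W_2)$, which is equivalent to BE$(\lambda)$ by the von Renesse--Sturm theorem.
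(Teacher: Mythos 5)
Your proposal is correct and is, at bottom, the same argument as the paper's second proof of (i)$\Rightarrow$(ii) (the $\Gamma$-calculus proof), though re-packaged. The paper splits $\ent(\mu_t)=h_f(t)+h_b(t)$ with $h_f(t)=\int \tilde f_t\tilde g_t(\log\tilde f_t+U)\,d\vol$, $h_b(t)=\int\tilde f_t\tilde g_t(\log\tilde g_t+U)\,d\vol$; your $E_\pm$ are exactly $E_-=-\tfrac{2}{\sigma}\,\partial_t h_f$ and $E_+=\tfrac{2}{\sigma}\,\partial_t h_b$, so the differential inequalities $E_+'\ge\sigma\lambda E_+$ and $E_-'\le-\sigma\lambda E_-$ are precisely the paper's $\partial_{tt}h_b\ge\lambda\sigma\,\partial_t h_b$ and $\partial_{tt}h_f\ge-\lambda\sigma\,\partial_t h_f$; your Bochner computation for $E_+'$ is exactly the $\Gamma_2$ identity $\partial_{tt}h_b=2\int\Gamma_2(\log g_t,\log g_t)\,f_tg_t\,d\inv$ of Lemma~\ref{lm:gamma2}; your Girsanov "energy--cost identities'' are the paper's boundary relation $h_f(0)+h_b(1)=\entcost(\mu,\nu)$; and your elementary elimination bounds $\int_0^tE_+\le\frac{e^{\sigma\lambda t}-1}{e^{\sigma\lambda}-1}\int_0^1E_+$ (respectively for $E_-$) are exactly what Lemma~\ref{lm:diffineq} delivers when applied to $h_b$ (respectively $h_f$). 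The converse direction is verbatim the paper's small-noise argument via von Renesse--Sturm. In short: same decomposition, same key inequality, same ingredients; your version streamlines the final comparison step by avoiding the explicit separate statement of Lemma~\ref{lm:diffineq}, but you silently assume, as the paper does and handles via a density argument, enough regularity of $f,g$ (e.g.\ $f,g\in\cC^{b,+}_\infty$) to make $E_\pm$ continuous up to $t=0,1$ and the boundary identities legitimate.
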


\begin{remark}
A simple case when our proof works beyond compactness is when Assumption \ref{ref:mainass} (B) holds, i.e. when $\bP$ is a stationary Ornstein Uhlenbeck process.
\end{remark}

In the article \cite{leonard2013convexity} the author proves a representation of the first and second derivative of the entropy along SBs using the operators $\Gamma$ and $\Gamma_2$. From this, he deduces that the entropy is convex under the condition \eqref{eq:Bakem}. However, no quantitative estimate such as \eqref{eq:entropybound} is proven there. Having equation \eqref{eq:2ndorderODE} at hand, we can give a geometric interpretation to the results of \cite{leonard2013convexity},  turn them into quantitative estimates and gives some answers to the questions raised there. 
We shall give a first ``geometric" proof of $(i) \Rightarrow (ii)$ at Theorem  \ref{thm:entropybound}, and then a second proof based on $\Gamma$-calculus, which follows the blueprint of the first proof. 
\begin{remark}
	Since the functions $\frac{1-\exp(-\sigma \lambda(1-t))}{ 1 -\exp(- \sigma \lambda t)}$ and $\frac{1-\exp(-\sigma \lambda t)}{ 1 -\exp(- \sigma \lambda t)}$ are increasing in $\lambda$ and only the term involving $\entcost(\mu,\nu)$ is decreasing, one might think that the bound \eqref{eq:entropybound} could get worse by increasing $\lambda$. This is false because of how $\entcost(\mu,\nu)$ depends on the marginal entropies. 
	It can be read off the proof of Theorem \ref{thm:entropybound} that for any $\mu,\nu$ fixed the right hand side of \eqref{eq:entropybound} is strictly decreasing in $\lambda$. 
\end{remark}
\begin{figure}[h]
\includegraphics[scale=0.5]{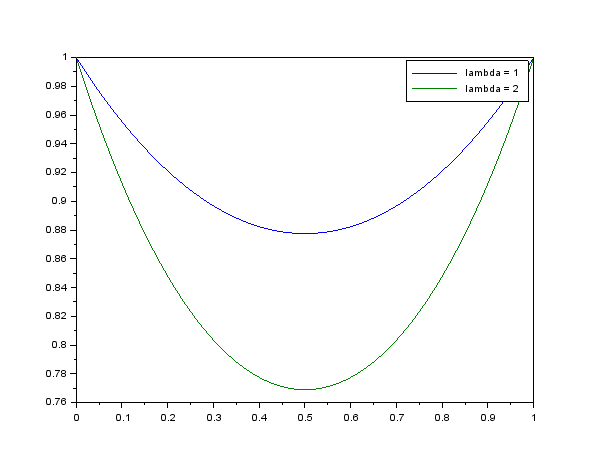}
\caption{If $M=\R, \scrL = \frac{1}{2} \Delta - x \cdot  \nabla $, then $\bP$ is a stationary OU process and the condition \eqref{eq:Bakem} holds with $\lambda =1$. In blue, we plot the corresponding bound \eqref{eq:entropybound} for $\sigma=1$. We chose $\mu = \nu$ to  be a Gaussian law with mean zero and variance $0.25$. The entropic cost turns out to be around 0.6841. In green the same bound for $ \scrL = \frac{1}{2} \Delta - 2x  \cdot \nabla $. In this case $\lambda =2$, and the bound is thus stronger.} 
\end{figure}

\subsection{A functional inequality for the entropic transportation cost}
Setting $t=\frac{1}{2}$ in \eqref{eq:entropybound}, we obtain
\begin{cor}\label{cor:enttrans1}
	Let $M$ be compact and \eqref{eq:Bakem} hold for some $\lambda>0$. Then, for any $\mu,\nu \in \cP(M)$ and $\sigma>0$
	\be\label{eq:sensitivitybound}
	\entcost(\mu,\nu) \leq \frac{1}{1-\exp(-\frac{\sigma \lambda}{2})} (\ent(\mu)+\ent(\nu)).
	\ee
\end{cor}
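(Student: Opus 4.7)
The proof amounts to specializing Theorem \ref{thm:entropybound} at the midpoint $t=1/2$, simplifying the coefficients, and discarding a nonnegative term. The plan is as follows.

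First I would set $t=1/2$ in \eqref{eq:entropybound}. The two marginal-entropy coefficients coincide and both equal
\[
\frac{1-\exp(-\sigma\lambda/2)}{1-\exp(-\sigma\lambda)} \;=\; \frac{1}{1+\exp(-\sigma\lambda/2)},
\]
after factoring $1-\exp(-\sigma\lambda)=(1-\exp(-\sigma\lambda/2))(1+\exp(-\sigma\lambda/2))$. For the coefficient of $\entcost(\mu,\nu)$, the identities $\cosh x - 1 = 2\sinh^2(x/2)$ and $\sinh x = 2 \sinh(x/2)\cosh(x/2)$ give
\[
\frac{\cosh(\sigma\lambda/2)-1}{\sinh(\sigma\lambda/2)} \;=\; \tanh(\sigma\lambda/4).
\]
Thus \eqref{eq:entropybound} at $t=1/2$ becomes
\[
\entab(\mu_{1/2}) \;\leq\; \frac{1}{1+\exp(-\sigma\lambda/2)}\bigl(\entab(\mu)+\entab(\nu)\bigr) \;-\; \tanh(\sigma\lambda/4)\,\entcost(\mu,\nu).
\]

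Next I would use that $\entab(\mu_{1/2})\geq 0$. Since shifting $U$ by an additive constant leaves $\scrL$, and hence $\bP$ up to a scalar, invariant, and since one checks that all the terms in \eqref{eq:entropybound} transform consistently under this shift, we may without loss of generality normalize $\inv$ to be a probability measure, so that $\entab(\cdot)=\scrH(\cdot|\inv)\geq 0$. Dropping the nonnegative term $\entab(\mu_{1/2})$ and rearranging yields
\[
\tanh(\sigma\lambda/4)\,\entcost(\mu,\nu) \;\leq\; \frac{1}{1+\exp(-\sigma\lambda/2)}\bigl(\entab(\mu)+\entab(\nu)\bigr).
\]

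Finally, using the elementary identity
\[
\tanh(s/2)\bigl(1+e^{-s}\bigr) \;=\; \frac{1-e^{-s}}{1+e^{-s}}\bigl(1+e^{-s}\bigr) \;=\; 1-e^{-s}
\]
with $s=\sigma\lambda/2$, the product of the two constants on the left and right collapses to $1-\exp(-\sigma\lambda/2)$, giving \eqref{eq:sensitivitybound}. There is essentially no obstacle here beyond the bookkeeping of the hyperbolic simplifications; the only subtle point is the harmless normalization convention for $\inv$ that ensures $\entab\geq 0$.
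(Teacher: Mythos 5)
Your proof is correct and follows exactly the route the paper indicates (the paper's entire proof of this corollary is the sentence ``Setting $t=\frac12$ in \eqref{eq:entropybound}, we obtain\dots''): you set $t=1/2$, simplify the hyperbolic coefficients, and drop the nonnegative term $\entab(\mu_{1/2})$. The extra care you take in checking that \eqref{eq:entropybound} is invariant under additive shifts of $U$ (so that one may normalize $\inv$ to a probability and use $\entab\geq 0$) is a step the paper leaves implicit but is indeed needed; your verification that the coefficients satisfy $\alpha(t)+\beta(t)-\gamma(t)=1$ is the right way to justify it.
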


To interpret this inequality, recall that if $\hat{\bQ}$ is the Schr\"odinger bridge between $\mu$ and $\nu$ then
\[ \scrH(\hat{\bQ}| \bP) =\entcost(\mu,\nu). \]
The bound \eqref{eq:sensitivitybound}  tells that the entropic cost is at most linear in the marginal entropies with a constant that improves with curvature. Since $\entcost(\mu,\nu)$ is a relative entropy on path space, \eqref{eq:sensitivitybound} may as well be seen as a partial converse to the fact that joint entropies dominate marginal entropies. Such bound may be of particular interest in practice. Indeed, it is a very hard task to compute $\entcost(\mu,\nu)$. However,
 $\ent(\mu),\ent(\nu)$ can be computed directly from the data of the problem.
  With some simple computation we also derive from Theorem \ref{thm:entropybound} the following Corollary.
\begin{cor}\label{cor:enttrans2}
Let $M$ be compact and \eqref{eq:Bakem} hold for some $\lambda>0$. Then, for any $\mu \in \cP(M)$ and $\sigma>0$
\be\label{eq:enttransineq2}
	\ent (\mu) \leq \entcost(\mu,\inv) \leq \frac{1}{1-\exp(-\sigma\lambda)} \ent(\mu).
	\ee
\end{cor}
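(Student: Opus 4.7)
The plan is to combine two independent arguments, one for each inequality. For the lower bound $\ent(\mu) \leq \entcost(\mu,\inv)$, I would invoke the chain rule for relative entropy along the initial time $t=0$. Since $\bP$ is stationary with $(X_0)_\#\bP = \inv$, any probability $\bQ$ on $\Omega$ with $(X_0)_\#\bQ = \mu$ satisfies
\[ \scrH(\bQ\,|\,\bP) \;=\; \scrH(\mu\,|\,\inv) \;+\; \int \scrH\bigl(\bQ(\cdot\,|\,X_0=x)\,\big|\,\bP(\cdot\,|\,X_0=x)\bigr)\,d\mu(x), \]
where the integral is non-negative and $\scrH(\mu\,|\,\inv) = \ent(\mu)$ by the definition $d\inv = e^{-2U}d\vol$. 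Minimizing the left hand side over all $\bQ$ with marginals $(\mu,\inv)$ yields $\entcost(\mu,\inv) \geq \ent(\mu)$.

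For the upper bound I would specialize Theorem \ref{thm:entropybound} to $\nu = \inv$. Two simplifications then occur simultaneously: $\ent(\inv) = 0$, since its integrand $\log\inv + 2U$ vanishes identically; and $\entab(\mu_t) = \scrH(\mu_t\,|\,\inv) \geq 0$ for every $t \in [0,1]$. Substituting these into \eqref{eq:entropybound} collapses it to
\[ 0 \;\leq\; A(t)\,\ent(\mu) \;-\; C(t)\,\entcost(\mu,\inv), \qquad t \in [0,1], \]
with $A(t) := (1-e^{-\sigma\lambda(1-t)})/(1-e^{-\sigma\lambda})$ and $C(t) := (\cosh(\sigma\lambda/2)-\cosh(\sigma\lambda(t-1/2)))/\sinh(\sigma\lambda/2)$. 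Since $C(t)>0$ on $(0,1)$, this rearranges to $\entcost(\mu,\inv) \leq (A(t)/C(t))\,\ent(\mu)$ for every such $t$. The final step is to pass to the limit $t \to 1^-$: both $A(t)$ and $C(t)$ vanish, and a L'Hopital computation with $A'(1) = -\sigma\lambda/(1-e^{-\sigma\lambda})$ and $C'(1) = -\sigma\lambda$ identifies $\lim_{t\to 1^-} A(t)/C(t) = 1/(1-e^{-\sigma\lambda})$, which is the claimed constant.

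The only technical point is the limit computation at $t=1^-$, which is elementary once one notices that the choice $\nu = \inv$ forces the bound \eqref{eq:entropybound} to degenerate at the right endpoint (both sides vanish there), so the useful information is encoded in the first order expansion near $t=1$ rather than at any interior time. No real obstacle is expected.
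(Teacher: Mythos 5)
Your proof is correct and the upper bound argument is essentially the same as the paper's: specialize Theorem \ref{thm:entropybound} to $\nu=\inv$ (so $\ent(\inv)=0$), use $\ent(\mu_t)\geq 0$, and extract the constant by taking $t\to 1^-$ (the paper divides by $1-t$ and uses the symmetry of $\cosh$; you use L'H\^opital, which gives the same limit). The paper does not spell out the lower bound, treating it as a standard consequence (it only invokes it implicitly when dispatching the $\ent(\mu)=+\infty$ case); your chain-rule disintegration of $\scrH(\bQ\,|\,\bP)$ at $t=0$ is a clean, correct way to make it explicit.
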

We shall see in the next section how \eqref{eq:enttransineq2} is connected with Talagrand's inequality. Let us remark here that the entropic transportation cost falls into the class of generalized costs considered in \cite{gozlan2017Kantorovich}. 
\subsection{Small noise limits and relations with classical optimal transport}\label{sec:smallnoise}
In this section we ``slow down"  Brownian motion, i.e. consider the generator
\[ \scrL^{\varepsilon} = \frac{\varepsilon}{2} \Delta, \]
assuming for simplicity that $U=0$. Because of this choice, $\ent$ is equal to the relative entropy against the volume measure, which we abbreviate with $\scrH$. Our aim is to provide an heuristic showing that the bounds obtained above are consistent with the classical results of optimal transport. We will make this argument rigorous in the proof of Theorem \ref{thm:entropybound}. In particular, we shall see how the convexity of the entropy along displacement interpolations is the small noise limit of the estimate \eqref{eq:entropybound}, and Talagrand's transportation inequality (see \cite{talagrand1996transportation}, \cite{otto2000generalization}) is the small noise limit of \eqref{eq:enttransineq2}. Let \eqref{eq:Bakem} hold and $(\mu^{\veps}_t)$ be the marginal flow of $\mathrm{SB}(\mu,\nu,\scrL^{\veps})$; the bound \ref{thm:entropybound} says that
\bea\label{eq:smallnoiseentropybound}
\nonumber \scrH(\mu^{\varepsilon}_t )  \leq \frac{1-\exp(- \veps \lambda (1-t))}{ 1 -\exp(-  \varepsilon \lambda)}\scrH(\mu )+ \frac{1-\exp(- \veps \lambda t)}{ 1 -\exp(-  \veps\lambda)}\scrH(\nu ) \\-  \frac{\cosh(\frac{\veps\lambda }{2})- \cosh(-\veps\lambda  (t-\frac{1}{2}))}{\sinh(\frac{\veps\lambda}{2})} \cT_{\scrH}^{\veps}(\mu,\nu),
\eea
 
Moreover, Corollary \ref{cor:enttrans2} is
\be\label{eq:smallnoise2} \forall \mu \in \cP(M),\quad  \cT^{\veps}(\mu,\inv) \leq \frac{1}{1-\exp(- \lambda \veps)} \scrH(\mu ). \ee
In the articles \cite{Mik04,leonard2012schrodinger,gigli2017second} connections between the Schr\"odinger problem and the Monge Kantorovich problem were established. In particular, in \cite[Th 3.6]{leonard2012schrodinger} a $\Gamma$-convergence result for the ``small noise" Schr\"odinger problem towards the Benamou-Brenier formulation of the Monge Kantorovich problem is proven. Roughly speaking, this implies that
\[ (\mu^{\varepsilon}_t )\rightarrow (\mu^{0}_t), \quad \varepsilon \cT^{\varepsilon}(\mu,\nu) \rightarrow \frac{1}{2} W^2_2(\mu,\nu), \]
 where $(\mu^{0}_t)$ is the displacement interpolation between $\mu$ and $\nu$.
Using these results, and letting $\varepsilon \rightarrow 0$ in \eqref{eq:smallnoiseentropybound}, one recovers the convexity of the entropy along displacement interpolations, whereas in \eqref{eq:smallnoise2} one recovers Talagrand's inequality. 

\subsection{Reciprocal characteristics and Fisher information}
\subsubsection*{Reciprocal characteristics}
In this section we take $M=\RD,\sigma =1$. $\bP$ is then the stationary Langevin dynamics for the generator
\[ \scrL = \frac{1}{2}\Delta - \nabla U \cdot \nabla \quad .  \]
The aim of this section is to point out some connections between convexity of the \emph{reciprocal characteristic} and convexity of the Fisher information along SBs. Let us recall the definition of \emph{reciprocal characteristic} associated with a smooth potential $U$: it is the vector field $\nabla \scrU$, where we recall that 
\[ \scrU(x) := \frac{1}{2} |\nabla U|^2(x)- \frac{1}{2} \Delta U(x).\]

In \cite{Kre88,Cl91} it is shown how the reciprocal characteristic is an invariant for the family of bridges of the Langevin dynamics, and, more generally, for the associated  \emph{reciprocal processes}. We refer to the survey \cite{LRZ} and the articles \cite{Kre97,RT02,RT05,CL15} for more information about reciprocal processes and reciprocal characteristics.
In Theorem \ref{thm:2ndorderODEb} the reciprocal characteristic appears as one of the terms expressing the acceleration of a Schr\"odinger bridge.
  In the recent work \cite{conforti2016gradient}, a number of quantitative results about the bridges of the Langevin dynamics were obtained, under the hypothesis that $\scrU$ is uniformly convex. In particular at Theorem 1.1, an equivalence between $\alpha^2$ convexity of $\scrU$, the possibility of constructing certain couplings for bridges with different endpoints, and a gradient estimate along the bridge semigroup is proven. We want to add one more motive of interest for the case when $\scrU $ is convex showing that it implies lower bounds for the second derivative of the modified Fisher information along SBs. 
\subsubsection*{Convexity of the Fisher information}
If $\scrU$ is convex, we are able to show that $t \mapsto \fishu(\mu_t)$ is convex, provided some technical assumptions hold and the optimal coupling $\hat{\pi}$ of \ref{eq:SPs} is a log-concave measure. In the next Theorem, since we are in the flat case, we prefer to write $D v \cdot w$ instead of $\nabla_w v$ for the derivatives of vector fields.
\begin{theorem}\label{thm:fishdecay}
Let $U,\mu,\nu$ be such that
\begin{enumerate}[(i)]
\item  The hypothesis of Theorem \ref{thm:2ndorderODEb} are satisfied.
\item The vector field \[ \partial_t \otgrad \fishu(\mu_t)+D \otgrad \fishu(\mu_t) \cdot v_t \] is bounded in $L^2_{\mu_t}$ uniformly in $t\in [0,1]$.
\item If we define $A_t,B_t$ by
\[ A_t(x) := \sup_{\substack{1 \leq k \leq 4 \\ i_1..i_k \in \RD}}  |\partial_{x_{i_k}}..\partial_{x_{i_1}} \log \mu_t(x) |, \quad B_t(x):= \sup_{\substack{1 \leq k \leq 4 \\ 1\leq j \leq d \\ i_1..i_k \in \RD}}  |\partial_{x_{i_k}}..\partial_{x_{i_1}} v^j_t(x) |,\]
then
\bes
\sup_{t \in [0,1]} \int_{\RD}  ( A_t B_t)^2( \max_{1\leq i \leq d}|\partial_i \mu_t|+\mu_t) d \vol <+\infty.
\ees 
\item The optimal coupling $\hat{\pi}$ in \ref{eq:SPs} is log-concave.
\item $\scrU$ is $\alpha^2$ convex.
\end{enumerate}
Then $t \mapsto \fishu(\mu_t)$ is convex. Moreover, we have the following bound:
\be\label{eq:fishsecder} \partial_{tt} \fishu(\mu_t) \geq  8 \alpha^2 |v_t|^2_{\tsp_{\mu_t}}+  \frac{1}{8}  |\otgrad \fishu(\mu_t) |^2_{\tsp_{\mu_t}}.\ee

\end{theorem}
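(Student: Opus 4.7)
The plan is to combine the second-order equation of Theorem \ref{thm:2ndorderODEb} with the chain rule for functionals along curves in $\cP_2(\RD)$, reduce the claim to a pointwise lower bound on the Wasserstein Hessian of $\fishu$ evaluated at $v_t$, and then produce this bound by exploiting separately the $\alpha^2$-convexity of $\scrU$ and the log-concavity of $\hat{\pi}$.

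Hypothesis (i) places us directly in the scope of Theorem \ref{thm:2ndorderODEb}, so that $(\mu_t)$ is a regular curve, $(v_t)$ is absolutely continuous and satisfies
\[ \frac{\bD}{dt} v_t \;=\; \tfrac{1}{8}\,\otgrad\fishu(\mu_t).\]
Hypotheses (ii) and (iii) furnish precisely the $L^2_{\mu_t}$-boundedness and pointwise smoothness required to differentiate $t\mapsto \fishu(\mu_t)$ twice under the integral sign and to apply the second-order chain rule
\[ \partial_{tt} F(\mu_t)\;=\;\hess_{\mu_t} F(v_t,v_t)+\Big\langle \otgrad F(\mu_t),\,\tfrac{\bD}{dt} v_t\Big\rangle_{\tsp_{\mu_t}}\]
with $F=\fishu$. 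Substituting the bridge ODE into this identity yields
\[ \partial_{tt}\fishu(\mu_t)\;=\;\hess_{\mu_t}\fishu(v_t,v_t)\;+\;\tfrac{1}{8}\,\big|\otgrad\fishu(\mu_t)\big|^2_{\tsp_{\mu_t}},\]
so that \eqref{eq:fishsecder} is equivalent to the single pointwise inequality
\[ \hess_{\mu_t}\fishu(v_t,v_t)\;\geq\;8\alpha^2\,|v_t|^2_{\tsp_{\mu_t}}.\]

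Next I would split the Fisher information, via an integration by parts justified by assumption (iii), into a bare piece and a potential piece,
\[ \fishu(\mu)\;=\;\int_{\RD}|\nabla\log\mu|^2\,d\mu\;+\;8\int_{\RD}\scrU\,d\mu,\]
so that its Wasserstein Hessian splits correspondingly. The potential contribution is the standard Otto Hessian of a potential energy, $v\mapsto\int\langle 8\,\mathbf{Hess}\,\scrU\cdot v, v\rangle\,d\mu$, which by assumption (v) is at least $8\alpha^2|v_t|^2_{\tsp_{\mu_t}}$ when evaluated at $v_t$, delivering exactly the constant in the claim. The entire burden of the proof then falls on showing that the Wasserstein Hessian of the bare Fisher information $\scrI(\mu)=\int |\nabla\log\mu|^2 d\mu$ at $v_t$ is nonnegative.

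This nonnegativity is the main obstacle, and it is where assumption (iv) comes in. My approach is to combine the bridge decomposition \eqref{eq:bridgedec} with log-concavity of the Langevin-bridge transition kernels (a property available for $\bP$ in the convex regime, in the spirit of \cite{conforti2016gradient}) to transfer log-concavity of $\hat{\pi}$ onto log-concavity of $\mu_t$ and of the Schr\"odinger potentials $f_t,g_t$. Writing the velocity field in its $fg$ form $v_t=\tfrac{1}{2}\nabla\log(g_t/f_t)-\nabla U$ and expanding $\hess_{\mu_t}\scrI(v_t,v_t)$ via Otto calculus, the integrations by parts permitted by (iii) should reorganize the expression into an $L^2(\mu_t)$-square of a second-order differential operator acting on $v_t$ and $\log\mu_t$, plus a remainder of the form $-\int\langle\mathbf{Hess}\log\mu_t\cdot w,w\rangle\,d\mu_t$ for some vector field $w$ built from $v_t$ and its first derivatives. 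Concavity of $\log\mu_t$ forces this remainder to be nonnegative and the square is manifestly nonnegative, which yields $\hess_{\mu_t}\scrI(v_t,v_t)\geq 0$, hence the bound \eqref{eq:fishsecder} and convexity of $t\mapsto\fishu(\mu_t)$. The technical heart of this paragraph, controlling fourth-order derivatives of $\log\mu_t$ and justifying all integrations by parts under (iii), is where I expect the real difficulty to lie.
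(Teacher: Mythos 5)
Your overall architecture matches the paper's: you invoke Theorem~\ref{thm:2ndorderODEb} and the compatibility of the metric to write $\partial_{tt}\fishu(\mu_t)=\langle\frac{\bD}{dt}\otgrad\fishu(\mu_t),v_t\rangle_{\tsp_{\mu_t}}+\frac{1}{8}|\otgrad\fishu(\mu_t)|^2_{\tsp_{\mu_t}}$, separate a potential contribution $8\int\langle v_t,\mathbf{Hess}\,\scrU\cdot v_t\rangle\,d\mu_t$ that is handled by assumption~(v), and reduce everything to nonnegativity of the remaining ``bare Fisher'' Hessian term using log-concavity of $\mu_t$. The route to log-concavity of $\mu_t$ --- the bridge decomposition \eqref{eq:bridgedec} plus log-concavity of $\bP^{xy}(X_t\in\cdot)$ under convex $\scrU$ and of $\hat\pi$ --- is exactly the paper's Lemma~\ref{lm:logmix}. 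So in broad strokes you have found the right proof.

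However, there is a genuine gap where you write that the integrations by parts ``should reorganize'' the Hessian into a square plus a $\mathbf{Hess}\log\mu_t$ remainder. This is the entire content of the paper's Lemma~\ref{lm:covdevfish} and its proof occupies the bulk of the argument: it is a lengthy explicit computation of $\langle\partial_t\otgrad\fishu(\mu_t)+D\otgrad\fishu(\mu_t)\cdot v_t,v_t\rangle_{\tsp_{\mu_t}}$ by repeated integration by parts (justified by~(iii)) landing on identity~\eqref{eq:covdevfish1}, $2\int|Dv_t\cdot\nabla\log\mu_t+\nabla\divg v_t|^2\,d\mu_t-4\int\sum_{k,j}(Dv_t\cdot Dv_t)_{kj}\partial_{kj}\log\mu_t\,d\mu_t+8\int\langle v_t,\mathbf{Hess}\,\scrU\cdot v_t\rangle\,d\mu_t$. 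You should not expect the remainder to take the form $-\int\langle\mathbf{Hess}\log\mu_t\cdot w,w\rangle\,d\mu_t$ for a single vector field $w$: what actually emerges is the \emph{trace} of the product of the PSD matrices $Dv_t\cdot Dv_t$ and $-\mathbf{Hess}\log\mu_t$, whose nonnegativity is settled by Schur's product theorem (or equivalently $\mathrm{tr}(AB)\geq 0$ for $A,B\succeq 0$). Also, you do not need --- and it is not established --- log-concavity of the Schr\"odinger potentials $f_t,g_t$; only log-concavity of $\mu_t$ enters. Without carrying out the computation yielding \eqref{eq:covdevfish1}, the proof is incomplete at precisely its central step, so the proposal as written is a plan rather than a demonstration.
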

A simple example to which the Theorem can be applied is when $\bP$ is a stationary Ornstein Uhlenbeck process and $\mu,\nu$ are Gaussian laws. Concerning the assumptions of the Theorem, we believe that (iii) could be largely weakened , and that the assumption that $\hat{\pi}$ is log concave cannot be significantly weakened. Indeed, the key argument in the proof of Theorem \ref{thm:fishdecay} is to identify the log concave measures as ``convexity points" for the Fisher information functional $\scrI$ (corresponding to the case $U=0$). Let us explain this. In Lemma \ref{lm:covdevfish} we prove that \footnote{We recall that $(v_t)$ is the velocity field of $(\mu_t)$, $\langle \cdot, \cdot \rangle_{\tsp_{\mu_t}}$ is the inner product in $L^2_{\mu_t}$ and $\frac{\bD}{dt}$ the covariant derivative. We also denote $Dv_t$ the Jacobian matrix of $v_t$. Finally, we abbreviate $\partial_{x_i}$ with $\partial_i$, and adopt the same convention for higher-order derivatives.}
\bea\label{eq:rec3} \nonumber \langle \frac{\bD}{dt} \otgrad \scrI (\mu_t) , v_t \rangle_{\tsp_{\mu_t}} &=&    2 \int | D v_t \cdot \nabla \log \mu_t  +  \nabla \mathbf{div}(v_t) |^2 d\mu_t  \\&-& 4 \int \sum_{k,j=1}^d (D v_t\cdot Dv_t)_{kj} \partial_{kj} \log \mu_t d\mu_t . \eea
 Schur's product theorem \cite[Th 7.5.3]{horn2012matrix} makes sure that if $ -\mathbf{Hess} \log \mu_t$ is positive definite then $\sum_{k,j=1}^d (D v_t\cdot D v_t)_{kj} \partial_{kj} \log \mu_t >0$. Since $\langle \frac{\bD}{dt} \otgrad \scrI (\mu_t) , v_t \rangle_{\tsp_{\mu_t}} $ morally stands for 
$\langle \hess_{\mu_t}(v_t),v_t\rangle_{\tsp_{\mu_t}} $, equation \eqref{eq:rec3} indicates that log-concave measures are convexity points for $\scrI$. The same formula suggests that the Fisher information is not a convex function in general. 
For the modified Fisher information $\fishu$ we get
\bea\label{eq:rec2} \nonumber \langle \frac{\bD}{dt} \otgrad \fishu (\mu_t) , v_t \rangle_{\tsp_{\mu_t}} &=&    2 \int | D v_t \cdot \nabla \log \mu_t  +  \nabla \mathbf{div}(v_t) |^2 d\mu_t  \\&-& 4 \int \sum_{k,j=1}^d (D v_t\cdot Dv_t)_{kj} \partial_{kj} \log \mu_t d\mu_t \\
&+& 8\int \langle v_t , \mathbf{Hess}\scrU\cdot v_t \rangle d\mu_t. \eea
 This formula clearly indicates that the convexity of $\scrU$ contributes to the convexity of $\scrI_U$. 
\begin{remark}
If we reintroduce the dependence on $\sigma$ in Theorem \ref{thm:fishdecay} the analog of \eqref{eq:fishsecder} is
\[  \partial_{tt} \fish(\mu_t) \geq  8 \alpha^2 |v_t|^2_{\tsp_{\mu_t}}+  \frac{\sigma^2}{8}  |\otgrad \fish(\mu_t) |^2_{\tsp_{\mu_t}}.\]
Letting $\sigma \rightarrow 0$ and using the convergence of the SB towards the displacement interpolation, it is natural to guess that the modified Fisher information is convex along a geodesic $(\mu_t)$ provided $\mu_t$ is log concave for all $t$. At the moment of writing, we have no rigorous proof of this.
\end{remark}

\subsection{Feynman-Kac penalisation of Brownian motion.}
In this section, the setting is $M=\RD$,\,$\sigma=1,U=0$. We look at a family of stochastic processes called \emph{Feynman-Kac penalisations} of Brownian motion, see  the monograph \cite{roynette2009penalising}. If $\bP$ is the stationary Brownian motionon $\RD$ (which is not a probability measure), a Feynman-Kac penalisation $\bQ$ \cite[Ch. 2]{roynette2009penalising} is a probability measure  on $\Omega$ which can be written as 
\[  d \bQ = \frac{1}{Z} f(X_0) \exp( - \int_{0}^1 K(X_s) ds ) d \bP, \]
where $K$ is a smooth lower bounded potential, $f$ a positive integrable function and $Z$ a normalization constant. As before, for any $t\in [0,1]$ we define the functions
\bea\label{def:fFKpen}
f_t(x) = E_{\bP}\big[ f(X_0)\exp( - \int_{0}^t K(X_s) ds) \big| X_t=x \big], \\
\nonumber g_t(x) = E_{\bP}\big[ \exp( - \int_{t}^1 K(X_s) ds)  \big| X_t=x \big].
\eea
We will also consider the energy functional $\scrE_K$ defined as
\be\label{eq:endef} \scrE_K(\mu) = \int_M K d \mu.\ee
\begin{theorem}\label{thm:FK}
Assume that the marginal flow $(\mu_t)$ of $\bQ$  is such that 
\be\label{eq:FK1} \forall \veps \in (0,1) \quad \sup_{t \in [\veps,1-\veps] } |\otgrad \scrI(\mu_t) + \scrE_K(\mu_t)|_{L^2(\mu_t)}<+\infty  \ee
and $f_t,g_t$ are such that for all $\veps \in (0,1)$
\be\label{eq:FK2}  \sup_{t \in [\veps,1-\veps]} |\nabla (\log g_t-\log f_t)|_{L^2_{\mu_t}}<+\infty ,\quad \sup_{\substack{t \in [\veps,1-\veps]\\ x \in M}} |\mathbf{Hess}_x (\log g_t - \log f_t) |_{op}<+\infty. \ee
Then the marginal flow $(\mu_t)$ is a regular curve and its velocity field $(v_t)$ is absolutely continuous. Moreover, $(\mu_t)$ satisfies the equation
\bes \frac{\bD}{dt}  v_t =\otgrad (\frac{1}{8} \scrI + \scrE_K) 	(\mu_t),\ees
where $\frac{\bD}{dt} v_t$ is the \emph{covariant derivative} of $(v_t)$ along $(\mu_t)$.   
\end{theorem}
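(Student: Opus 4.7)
The plan is to adapt the proof of Theorem \ref{thm:2ndorderODEb}, with the killing potential $K$ playing a role analogous to the drift potential $U$ in the Langevin setting. The starting point is the pair of parabolic PDEs satisfied by the forward and backward functions defined in \eqref{def:fFKpen}: the Markov property of $\bP$ together with the Feynman-Kac formula yields $\partial_t f_t = \frac{1}{2}\Delta f_t - K f_t$ and $\partial_t g_t = -\frac{1}{2}\Delta g_t + K g_t$. Since $\bP$ is stationary Brownian motion on $\RD$ and hence has Lebesgue marginals (up to normalization), a conditioning argument like the one giving the $fg$-representation for Schr\"odinger bridges shows that the marginal density is $\mu_t = Z^{-1} f_t g_t$.

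With this representation in hand, the velocity field is read off from the continuity equation: combining the two PDEs gives $\partial_t \mu_t = \frac{1}{2}\nabla \cdot (g_t \nabla f_t - f_t \nabla g_t)$, so that $v_t = \frac{1}{2}\nabla (\log g_t - \log f_t)$. In particular $v_t$ is a gradient field, and the covariant derivative $\frac{\bD}{dt} v_t$ therefore coincides with the Eulerian acceleration $\partial_t v_t + D v_t \cdot v_t$ without any tangent-space projection.

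The core computation is algebraic. Using $\Delta h / h = \Delta \log h + |\nabla \log h|^2$ to rewrite the PDEs above as equations for $\partial_t \log f_t$ and $\partial_t \log g_t$, taking the gradient of their difference, and using $\log \mu_t = \log f_t + \log g_t$ together with the identity $|\nabla \log f_t|^2 + |\nabla \log g_t|^2 = \frac{1}{2}|\nabla \log \mu_t|^2 + 2|v_t|^2$ (which follows from $2 v_t = \nabla(\log g_t - \log f_t)$), one eliminates $f_t$ and $g_t$ to get an expression for $\partial_t v_t$ in terms of $\mu_t$, $v_t$ and $K$. Adding $D v_t \cdot v_t = \frac{1}{2}\nabla |v_t|^2$ (which holds because $v_t$ is a gradient), the $|v_t|^2$ terms cancel exactly, leaving
\begin{equation*}
\frac{\bD}{dt} v_t = -\frac{1}{8} \nabla \bigl( 2 \Delta \log \mu_t + |\nabla \log \mu_t|^2 \bigr) + \nabla K,
\end{equation*}
which by \eqref{eq:fishgrad} (with $U \equiv 0$), and recognising $\nabla K$ as the Wasserstein gradient of the linear functional $\scrE_K$, is exactly $\otgrad\bigl(\frac{1}{8}\scrI + \scrE_K\bigr)(\mu_t)$.

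The remaining task is to justify the regularity claims. The bound on $\nabla(\log g_t - \log f_t)$ in \eqref{eq:FK2} translates into $\sup_t |v_t|_{L^2_{\mu_t}} < \infty$ on each $[\veps,1-\veps]$, yielding regularity of $(\mu_t)$ in the sense of Definition \ref{def:reg}, while the Hessian bound controls $D v_t$ uniformly. Assumption \eqref{eq:FK1}, together with the formula for the acceleration derived above, gives the $L^2_{\mu_t}$-bound on $\frac{\bD}{dt} v_t$ needed for absolute continuity of $(v_t)$ in the sense of Definition \ref{def:smooth}. The main obstacle---just as in Theorem \ref{thm:2ndorderODEb}---is to make these distributional manipulations rigorous in the Wasserstein covariant framework of Section \ref{sec:2}; assumptions \eqref{eq:FK1}--\eqref{eq:FK2} are designed precisely to make this interpretation work.
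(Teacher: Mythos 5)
Your proof follows the same route as the paper's: derive the Feynman--Kac PDEs for $f_t,g_t$, pass to the Hamilton--Jacobi--Bellman equations for their logarithms, read off $v_t=\tfrac{1}{2}\nabla(\log g_t-\log f_t)$ from the continuity equation, compute $\partial_t v_t+\tfrac{1}{2}\nabla|v_t|^2$ algebraically to obtain the Wasserstein gradient of $\tfrac{1}{8}\scrI+\scrE_K$, and invoke \eqref{eq:FK1}--\eqref{eq:FK2} with Lemma \ref{covident} for regularity and the identification of the covariant derivative. The one stylistic difference is that you combine the $|\nabla\log f_t|^2$ and $|\nabla\log g_t|^2$ terms via the identity $|\nabla\log f_t|^2+|\nabla\log g_t|^2=\tfrac{1}{2}|\nabla\log\mu_t|^2+2|v_t|^2$, whereas the paper expands the cross terms directly; both collapse to the same cancellation, so this is the same argument packaged slightly differently.
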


\section{The Riemannian structure of optimal transport}\label{sec:2}
\subsection{Preliminaries and notation} We consider a smooth Riemannian Manifold $(M,g)$ which is complete, connected, closed, without boundary, and of finite dimension. We denote the Levi-Civita connection of $(M,g)$ by $\levciv$. For any $\veps \in (0,1) $, we define $D_{\veps}:=[\veps,1-\veps]\times M$ and let $\cinf$ be the space of smooth functions on $M$. $\cinfp,\cinfc \subseteq \cinf$ are the subset of positive function and the subset of bounded functions whose derivatives are all bounded.  We also set $\cinfcp := \cinfc \cap \cinfp$. Finally $\cC^{c}_{\infty}\subseteq \cinf $ is the subset of compactly supported functions and $\cC^{c,+}_{\infty}:= \cC^{c}_{\infty} \cap \cC^{+}_{\infty}$.
The definition of $\cinf,\cinfc,\cinfcs$ naturally extends to vector fields over $M$, to functions whose domain is $[0,1] \times M$ instead of $M$, and to families of vector fields over $M$ indexed by time.
We call $\cP_2(M)$ the space of probability measures on $M$ admitting second moment. The Wasserstein distance $W_2(\mu,\nu)$ between $\mu,\nu\in \cP_2(M)$ is given by
 
\[ W_2^2(\mu,\nu) := \inf_{\pi \in \Pi(\mu,\nu)} \int_{M^2} d_M(x,y) \pi(dxdy), \]
where $d_M$ is the Riemannian distance on $M$.
In the rest of the paper, curves are always defined on the time interval $[0,1]$, unless otherwise stated.
In this section, we shall describe a kind of Riemannian structure on $\cP_2(M)$ associated with $W_2(\cdot,\cdot)$. We do not prove new results and simply extract definitons and results from \cite{ambrosio2013user} and \cite{giglisecond}, to which we refer for the proofs.

\subsection{Geodesics, Velocity fields}

Recall that if $(x_t)$ is a curve in a generic metric space $(\cX,d)$, we say that it is \emph{absolutely continuous} over $[\veps,1-\veps]$ provided that for some integrable function $f$ 
\[\forall \veps \leq r<s \leq 1-\veps \quad d(x_r,x_s) \leq \int_{r}^s f(t)dt. \]
In all what follows, we will write "absolutely continuous curve" and mean "absolutely continuous over $[\veps,1-\veps]$ for all $\veps \in (0,1) $ ". A curve is a \emph{constant speed geodesic} if and only if
\[\forall s,t \in [0,1] \quad  d(x_s,x_t) = |t-s| d(x_0,x_1). \]
$(\cX,d) $ is said to be a geodesic space provided that for any pair of points there exist a constant speed geodesic connecting them.
It turns out  that (\cite[Th 2.10]{ambrosio2013user}) $(\cP_2(M), W_2(\cdot,\cdot))$ is a geodesic space.
If $(\mu_t)$ is an absolutely continuous curve on $(\cP_2(M),W_2(\cdot,\cdot))$ then one can show (\cite[Th. 2.29]{ambrosio2013user}) that there exists a Borel family of vector fields $(v_t)$ such that the \emph{continuity equation} 
\be\label{eq:coneq} \partial_t \mu_t + \nabla \cdot (\mu_t v_t) =0 \ee
holds in the sense of distributions. Moreover, it can be shown that there exists a unique up to a negligible set of times family of vector fields $(v_t)$ satisfying \eqref{eq:coneq} and such that
\[    v_t \in \overline{\{ \nabla \varphi, \varphi \in \cC^c_{\infty} \}}^{L^2_{\mu_t}} \quad t- \text{a.e. \ .} \]
We call this the \emph{velocity field} of $(\mu_t)$. Conversely, if $(v_t)$ is a Borel family of vector fields satisfying \eqref{eq:coneq} such that  $\forall \veps \in (0,1)$  $\int_{\veps}^{1-\veps}|v_t|_{L^2_{\mu_t}}dt <+\infty$ and $v_t \in \overline{\{ \nabla \varphi, \varphi \in \cC^c_{\infty} \, \}}^{L^2_{\mu_t}} t- \text{a.e.}$, then $(\mu_t)$ is absolutely continuous and $v_t$ is its velocity field.
\subsection{Tangent space and Riemannian metric}
The tangent space at $\mu \in \cP_2(M)$ is generated by the ``space of directions"
\bes \mathit{Geod}_{\mu} = \{ \text{const. speed geodesics starting from $\mu$ defined on some interval $[0,T]$} \} /\approx \ees

where $(\mu_t) \approx (\mu'_t)$ provided they coincide on some right neighborhood of $0$. We equip  $\mathit{Geod}_{\mu}$ with the distance
\[ D((\mu_t),(\mu'_t)) = \lim_{t \downarrow 0} \frac{1}{t}W_2(\mu_t,\mu'_t). \]
The \emph{Tangent space} $\tsp_{\mu}$ is defined as the completion of $\mathit{Geod}_{\mu}$ w.r.t. $D$.
The tangent space can be nicely described using the completion in $L^2_{\mu}$ of the ``space of gradients "
by considering the map $ \iota_{\mu}$
\[\iota_{\mu} : \Big(\overline{\{ \nabla \varphi, \varphi \in \cC^c_{\infty} \}}^{L^2_{\mu}}, d_{L^2_{\mu}}\Big) \rightarrow (\tsp_{\mu},D), \quad \iota_{\mu} \nabla \varphi  
\mapsto (\mu^{\varphi}_t),\]
where $(\mu^{\varphi}_t)$ is the unique (modulo $ \approx$) constant speed geodesic originating from $\mu$ and such that $v_0=\nabla \varphi$. If $\mu$ is a regular measure in the sense of \cite[Def 1.25]{ambrosio2013user}, then $\iota_{\mu}$ can be extended to a bijective isometry. Therefore, $\tsp_{\mu}$ inherits the inner product $\langle .,.\rangle_{\tsp_{\mu}}$ from that of $L^{2}_{\mu}$
\[ \langle \iota_{\mu} \nabla \varphi,\iota_{\mu}\nabla \psi \rangle_{\tsp_{\mu}} := \langle \nabla \varphi,\nabla \psi\rangle_{L^2_{\mu}} . \]

For this reason, in what follows we make no distinction between the elements of the two different spaces, and write 
$\langle \nabla \varphi,\nabla \psi \rangle_{\tsp_{\mu}}$ instead of $\langle \iota_{\mu} \nabla \varphi , \iota_{\mu} \nabla \psi \rangle_{\tsp_{\mu}}$. Similarly, we write $|\nabla \varphi|_{\tsp_{\mu}}$ instead of $ |\iota_{\mu} \nabla \varphi|_{\tsp_{\mu}} $.

The Benamou-Brenier formula\footnote{In the original result of Benamou and Brenier $v_t$ is not the velocity field of $(\mu_t)$, but just an arbitrary weak solution to the continuity equation. However, it is easy to see that the representation formula for the Wasserstein distance remains true if we restrict the minimization to the couples $(\mu_t,v_t)$ such that $(v_t)$ is the velocity field of $(\mu_t)$.} \cite{benamou2000computational} shows how the metric we have just introduced  is morally the Riemannian metric for $(\cP_2(M),W_2(\cdot,\cdot))$. Indeed it says that
\bes
W^2_2(\mu,\nu) = \inf_{\mu_t,v_t} \int_{0}^1| v_t|^2_{\tsp_{\mu_t}} dt,
\ees
where $(\mu_t)$ varies in the set of absolutely continuous curves joining $\mu$ and $\nu$ and $(v_t)$ is the velocity field of $(\mu_t)$.
\subsection{Regular curves and flow maps}
We give the definition of regular curve following closely \cite[Def. 2.8]{giglisecond}, the only difference being that we define regularity over $[\veps,1-\veps]$ for $\veps \in (0,1)$, instead of looking at $[0,1]$. 
\begin{mydef}\label{def:reg}
For $\veps \in (0,1)$, an absolutely continuous curve $(\mu_t)$ is \emph{regular} over $[\veps,1-\veps]$ provided
\be\label{eq:myreg1} \int_{\veps}^{1-\veps} |v_t|^2_{\tsp_{\mu_t}} dt < + \infty \ee
and 
\be\label{eq:myreg2} \int_{\veps}^{1-\veps} \text{L} (v_t) dt < + \infty, \ee
where for a smooth vector field $\xi$, $\text{L}(\xi)$ is defined as 
\[  \text{L}(\xi) = \sup_{\substack{x\in M \\ w:|w|=1}} |\levciv_w \xi(x)|  .\]
\end{mydef}
For non smooth vector fields, the general definition of $\text{L}$  can be found at \cite[Def 2.1]{giglisecond};  in this article we will only be concerned with the smooth case. In all what follows, by regular curve we mean ``\emph{regular over } $[\veps, 1-\veps]$ \emph{for all} $\veps \in (0,1)$". 
If $(\mu_t)$ is a regular curve and $(v_t)$ its velocity field, there exists a unique family of maps, called \emph{flow maps}, $\bT(t,s,\cdot): \supp \mu_t \rightarrow \supp \mu_s $ such that for any $\veps \in (0,1)$, $t \in (0,1)$, $x \in \supp \mu_t$  the curve $s \mapsto \bT(t,s,x)$ is absolutely continuous over $(\veps,1-\veps)$  and satisfies
\be\label{eq:flowmaps} 
\begin{cases}
\frac{d}{ds} \bT(t,s,x) = v_s(\bT(t,s,x)) \quad a.e. \ s \in (\veps, 1-\veps), \\
\bT(t,t,x)= x.
\end{cases}
 \ee
 The flow maps enjoy the following properties:
\be\label{eq:flowppty} \bT(r,s,\cdot) \circ \bT(t,r,\cdot) = \bT(t,s,\cdot) , \quad \bT(t,s,\cdot)_{\#} \mu_t = \mu_s .\ee

\subsection{The maps $ (\tau_x)^t_s $ and $\tau^t_s(u) $}
These maps are needed to define the covariant derivative. The following are Definition 2.9 and 2.12 in \cite{giglisecond}.
\begin{mydef}\label{def:trmap1}
Let $(\mu_t)$ be a regular curve and $\bT(t,s,\cdot)$ its flow maps. Given	$t,s \in [0,1]$ and $x\in \supp \mu_t$, we let 
$(\tau_x)^t_s: T_{\bT(t,s,x)}M \rightarrow T_xM $ be the map which associate to $v \in T_{\bT(t,s,x)}M$ its parallel transport along the absolutely continuous curve $r \mapsto \bT(t,r,x)$ from $r=s$ to $r=t$.
\end{mydef}
The map $(\tau_x)^t_s$ is used to translate vectors along regular curves in $M$.  In the next definition we shall see how the maps $\tau^t_s$ do the same for vector fields along regular curves in $\cP_2(M)$. It should be stressed that the next definition is \emph{not} the parallel transport. Indeed, in general, $\tau^t_s(u)$ may not be in $\tsp_{\mu_t}$. 

\begin{mydef}\label{def:trmap2}
	Let $(\mu_t)$ be a regular curve, $\bT(t,s,\cdot)$ its flow maps and $u$ a vector field in $L^2_{\mu_s}$.
Then $\tau^t_s(u)$ is the vector field in $L^2_{\mu_t}$
defined by
\[ \tau^t_s(u)(x) = (\tau_x)^t_s (u \circ \bT(t,s,x)).  \]
\end{mydef}
The maps $\tau^t_s$ have similar properties to the flow maps: they enjoy the group property 
\be\label{eq:grouppty} \tau^t_s =\tau^t_r \circ \tau^r_s.
\ee
 Moreover $\tau^t_s$ is an isometry from $L^2_{\mu_s}$ to $L^2_{\mu_t}$, i.e.
\be\label{eq:isometry} 
\forall u \in L^2_{\mu_s}, \quad \int_{M} |u|^2 d \mu_s = \int_M  |\tau^t_s(u)|^2  d \mu_t,
\ee
or equivalently
\bes
|u|_{\tsp_{\mu_s}} = |\tau^t_s(u)|_{\tsp_{\mu_t}}.
\ees
\subsection{Vector fields along a curve}
\begin{mydef}
A vector field along a curve $(\mu_t)$ is a Borel map $(t,x) \mapsto u_t(x) \in \bT_xM$ such that $u_t \in L^2_{\mu_t}$ for a.e. $t$. It will be denoted by $(u_t)$. 
\end{mydef}
Observe that also non tangent vector fields are considered in this definition, i.e. $u_t$ may not be a gradient. Here is the definition of absolutely continuous vector field along a curve, see \cite[Def. 3.2]{giglisecond}.
\begin{mydef}\label{def:smooth}
Let $(u_t)$ be a vector field along the regular curve $(\mu_t)$ and $\tau^t_s(u)$ be given by Definition \ref{def:trmap2} . We say that $(u_t)$ is absolutely continuous over $[\veps,1-\veps]$ provided the map
\[ t \mapsto \tau^{t_0}_t(u_t) \in L^2_{\mu_{t_0}} \] is absolutely continuous in $L^2_{\mu_{t_0}}$ for all $t_0 \in [\veps, 1-\veps]$.
\end{mydef}
It can be seen that choice of $t_0$ is irrelevant. One could then set $t_0=\veps$ in the definition above. As before, by absolutely continuous vector field we mean ``\emph{absolutely continuous over} $[\veps,1-\veps]$ \emph{for all} $\veps \in (0,1)$".
\subsection{Total derivative and covariant derivative}
We are ready to define the total derivative of an absolutely continuous vector field  as in \cite[Def 3.6]{giglisecond}. Note that this is not yet the covariant derivative.
\begin{mydef}\label{def:totder} Let $(u_t)$ be an absolutely continuous vector field along the regular curve $(\mu_t)$. Its total derivative is defined as
\be\label{eq:totder} \frac{\bd}{dt}u_t := \lim_{h \rightarrow 0} \frac{\tau^t_{t+h}(u_{t+h})- u_t}{h} \quad  t-\text{a.e.},\ee
where the limit is intended in $L^2_{\mu_t}$.
\end{mydef}
To define the covariant derivative, we consider the orthogonal projection $P_{\mu}: L^2_{\mu} \rightarrow \overline{\{ \nabla \varphi, \varphi \in \cC^{c}_{\infty} \}}^{L^2_{\mu}}$. The following is \cite[Def. \, 6.8]{ambrosio2013user} for the flat case. For the general case, we refer to Definition 5.1 and discussion thereafter in  \cite{giglisecond}.

\begin{mydef}\label{def:covder}
	Let $(\mu_t)$ be an absolutely continuous and \emph{tangent} vector field along the regular curve $(\mu_t)$. Its covariant derivative is defined as
\bes
\frac{\bD}{dt} u_t := P_{\mu_t} ( \frac{\bd}{dt} u_t ) \quad t-\text{a.e.}.	
\ees
\end{mydef}
\subsection{Levi-Civita connection}
The covariant derivative just defined is the Levi-Civita connection, in the sense that it satisfies the \emph{compatibility of the metric} and \emph{torsion free identity}. The compatibility of the metric says that if $(u^1_t),(u^2_t)$ are tangent absolutely continuous vector fields along the regular curve $(\mu_t)$, then
\be\label{eq:metcomp} \partial_t \langle u^1_t,u^2_t \rangle_{\tsp_{\mu_t}} = \langle \frac{\bD}{dt} u^1_t,u^2_t \rangle_{\tsp_{\mu_t}}+ \langle  u^1_t,\frac{\bD}{dt}u^2_t \rangle_{\tsp_{\mu_t}}.\ee
For the torsion free identity, we refer to \cite[Eq. (6.10)]{ambrosio2013user} and \cite[Sec. 5.1]{giglisecond}.
\subsection{Gradient of a function}
Here, we do not seek for a very general definition, but rather give a customary one, well-adapted to our scopes, following \cite[Eq 3.50]{ambrosio2013user}. In this definition and in the rest of the paper by $\mu \in  \cC^+_{\infty}$ we mean that $\mu \ll \vol$ and $\frac{d\mu}{d\vol} \in \cC^+_{\infty}$.
\begin{mydef}\label{def:otgrad}
Let $\mu \in  \cC^+_{\infty}$. We say that $\scrF: \cP_2(M) \rightarrow \R \cup \{ \pm \infty \} $ is differentiable at $\mu$ if there exists a vector field $w \in \tsp_{\mu}$ such that for all $\varphi \in C^{c}_{\infty}$
\be\label{eq:graddef} \lim_{h \rightarrow 0} \frac{\scrF(\mu^{\varphi}_h  ) - \scrF(\mu) }{h} = \langle  w , \nabla \varphi \rangle_{\tsp_{\mu}}, \ee 
where $\mu^{\varphi}_h$ is the constant speed geodesic (modulo $\approx $ ) such that $\mu_0=\mu$ and $v_0=\nabla \varphi$.
\end{mydef}
It follows from our definition that if $\scrF$ is differentiable at $\mu$, then there exists a unique $w$ fulfilling \eqref{eq:graddef}. We then call $w$ the \emph{gradient} of $\scrF$ at $\mu$, and denote it $\otgrad \scrF(\mu)$.
Let us consider some functionals of interest. The gradient of the relative entropy $\ent$ is known to be
\be\label{eq:entgrad} \otgrad\ent(\mu) = \nabla \log \mu + 2\nabla U ,\ee
provided $\nabla \log \mu + 2 \nabla U \in L^2_{\mu}$ and $\mu$ is regular enough.
Thus, using \eqref{eq:fishdef} we obtain that
\be\label{eq:fishent} \fish(\mu) = |\otgrad \ent|^2_{\tsp_{\mu}}. \ee
The gradient of the modified Fisher information at $\mu$ is the vector field 
	\be\label{eq:fishgrad2} \otgrad \fish (\mu) = \nabla \Big( - |\nabla \log \mu |^2 - 2 \Delta \log \mu   + 8 \scrU \Big),  \ee
	provided the right hand side is in $L^2_{\mu}$ and $\mu$ regular enough (recall the definition of $\scrU$ given at \eqref{eq:fishgrad}).
All these computations can be found in \cite{von2012optimal}. We shall derive the expression \eqref{eq:fishgrad2} in the Appendix.
\subsection{Convexity of the entropy}
Recall that on a smooth finite dimensional Riemannian manifold whose Levi-Civita connection is $\overline{\nabla}$, the Hessian of $f$ at $x$ applied to $v \in T_{x}M $ is defined through (see e.g. \cite[Ex. 11 , pg. 141]{do1992riemannian})
\[ \mathbf{Hess}_xf(v) = \overline{\nabla}_{v} \nabla f (x) = \frac{\bD}{dt} \nabla f(x_t) |_{t=0}, \]
where $(x_t)$ is any curve such that $x_0=x$, $\dot{x}_0=v$. In this article we are only interested in defining a kind of Hessian for the entropy functional $\ent$. Therefore, as we did before, to simplify the definitions, we restrict to a very special setting.

\begin{mydef}\label{def:hessent}
Let $M$ be compact. Consider a measure $\mu \in \cC^{+}_{\infty}$ and a vector field $v \in \cC^b_{\infty} \cap \tsp_{\mu}$. We define
\[ \hess_{\mu} \ent (v) = \frac{\bD}{dt} \otgrad \ent (\mu_t)\Big|_{t=0}, \]
where $(\mu_t)$ is \emph{any} regular curve such that $\mu_0=\mu$, $v_0=v$ and $\mu_t \in \cC^{+}_{\infty}([0,\veps] \times M)$ for some $\veps >0$.
\end{mydef}
  It is easy to see that this is a good definition in the sense that there is always one curve fulfilling the requirements
and the value of the Hessian does not depend on the specific choice of the curve. The well known convexity of the entropy \cite{von2005transport} implies, in particular, that under the assumption \eqref{eq:Bakem} we have
\be\label{eq:entconv}
\forall \mu \in \cinfp,  v \in \cC^b_{\infty} \cap \tsp_{\mu}, \quad \langle \hess_{\mu} \ent (v),v \rangle_{\tsp_{\mu}} \geq \lambda  |v|^2_{\tsp_{\mu}}.
\ee
We shall give in the Appendix a formal proof of this fact, following \cite{otto2000generalization}.
\section{Proofs of the results}\label{sec:3}
\subsection{Proof of Theorems \ref{thm:2ndorderODE} and \ref{thm:2ndorderODEb} }
 This section is structured as follows: we first prove some preparatory Lemmas (Lemma \ref{lm:speedbound}, \ref{lm:speed}, \ref{covident}, \ref{lm:regcurv}). To prove these Lemmas, we rely on the technical Lemmas \ref{lm:logest} and \ref{lm:compactcase}, which we put in the Appendix. Finally, we prove the two Theorems. In the proofs of the Lemmas and of the Theorems, we assume for simplicity that $\sigma=1$. There is no difficulty in extending the proof to the general case. In the Lemmas \ref{lm:logest} and \ref{lm:compactcase} it is proven that if Assumption \ref{ref:mainass} holds, then the dual representation \eqref{eq:fgdec} holds as well. Thus, we will take it for granted in the next Lemmas and in the proof of the Theorem.
  In what follows, it is sometimes useful to use the functions $\tf_t$ and $\tg_t$ defined by (recall the definition of $f_t,g_t$ at \eqref{def:fg})
\be\label{def:tftg}
\tf_t(x) :=  \exp(-U(x)) f_t(x), \quad \tg_t(x) := \exp(-U(x)) g_t(x),
\ee
 The first Lemma is useful to identify the velocity field of $(\mu_t)$.
\begin{lemma}\label{lm:speedbound}
 Let Assumption \ref{ref:mainass} hold. If $f,g$ are given by \eqref{eq:fgdec} and $f_t,g_t$ by \eqref{def:fg}, then
\[\forall \veps \in (0,1) \quad  \sup_{t \in [\varepsilon,1-\varepsilon] } \int_{M} |\nabla \log g_t -\nabla \log f_t |^2 f_t g_t d \inv <+\infty. \] 
\end{lemma}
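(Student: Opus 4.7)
The starting point is the identification $\mu_t = f_t g_t \inv$ for the marginal of the Schr\"odinger bridge, which comes from the Markov property applied to $\hat{\bQ} = f(X_0) g(X_1) \bP$ together with the definition of $f_t, g_t$. Thus the integral in the statement is exactly $\int_M |\nabla \log g_t - \nabla \log f_t|^2 \, d \mu_t$, and by the parallelogram inequality is bounded by
\[ 2 \int_M \Big( \frac{|\nabla g_t|^2}{g_t} f_t + \frac{|\nabla f_t|^2}{f_t} g_t \Big) d\inv. \]
Reversibility of $\bP$ with respect to $\inv$ identifies $f_t = P_t f$ and $g_t = P_{1-t} g$, where $(P_s)$ is the Markov semigroup generated by $\scrL$. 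The task therefore reduces, symmetrically in $(f,g)$, to uniform two-sided pointwise bounds on $P_s f, P_s g$ and uniform bounds on $\nabla P_s f, \nabla P_s g$ for $s$ bounded away from $0$ and $1$.

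In case (A), $M$ is compact and $U$ smooth. The heat kernel $p_s(x,y)$ of $\scrL$ and its spatial gradient are uniformly bounded above, and $p_s$ is bounded uniformly below by a positive constant, for $s \in [\varepsilon, 1-\varepsilon]$. Via Lemma \ref{lm:compactcase} one may assume that $f, g$ satisfy suitable integrability against $\inv$, which combined with these heat kernel estimates yields $c_\varepsilon \leq f_t, g_t \leq C_\varepsilon$ and $|\nabla f_t|, |\nabla g_t| \leq C_\varepsilon$ on $[\varepsilon,1-\varepsilon] \times M$; hence $|\nabla \log f_t|$ and $|\nabla \log g_t|$ are uniformly bounded in $L^\infty$ and the integral is finite as $\mu_t$ is a probability. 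In case (B), the Mehler formula for the stationary OU semigroup on $\RD$ provides explicit expressions for $P_s f$ and $\nabla P_s f$ as Gaussian convolutions; combined with the fact that $\mu, \nu$ are compactly supported with bounded density (transferred via Lemma \ref{lm:logest} into structural bounds on $f,g$), one obtains Gaussian-type pointwise upper bounds on $f_t, g_t, \nabla f_t, \nabla g_t$ and lower bounds of the form $f_t(x), g_t(x) \geq C(\varepsilon) \exp(-\alpha |x|^2)$. These combine with the Gaussian character of $\inv$ to yield finiteness of the integral uniformly in $t$.

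The principal difficulty is obtaining the uniform positivity of $f_t$ and $g_t$ away from the endpoints $t=0,1$: the functions $f, g$ from the $fg$-decomposition are only guaranteed to be measurable and nonnegative and may in principle vanish on large sets, so strict positivity of $P_s f$ and $P_s g$ is not automatic and is precisely what the technical Appendix Lemmas \ref{lm:logest} and \ref{lm:compactcase} are designed to supply. Once this is in hand, the remaining estimates on gradients of the semigroup are classical, and the result follows from the displayed inequality above.
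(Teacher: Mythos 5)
Your proposal is correct and follows essentially the same strategy as the paper: both cases reduce to the estimates supplied by the Appendix lemmas (Lemma \ref{lm:compactcase} for (A), Lemma \ref{lm:logest} for (B)), and in case (B) the paper obtains the conclusion by citing directly the linear bound $|\nabla\log f_t|,\,|\nabla\log g_t| \leq A + B|x|$ from Lemma \ref{lm:logest}(ii), combined with the global boundedness of $f_t g_t$ (Lemma \ref{lm:logest}(i)) and the Gaussian tails of $\inv$. One caution in your framing of case (B): splitting via the parallelogram inequality into $\int \frac{|\nabla f_t|^2}{f_t}\, g_t\, d\inv + \int \frac{|\nabla g_t|^2}{g_t}\, f_t\, d\inv$ and then combining a Gaussian lower bound on $f_t$ with a separate upper bound on $|\nabla f_t|$ only works because both estimates carry the \emph{same} Gaussian rate (they come from integrating the same kernel over the compact support of $f$), a cancellation that the linear bound on $\nabla\log f_t$ from Lemma \ref{lm:logest}(ii) encodes automatically and that is easy to lose if the two pointwise bounds are treated as independent.
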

\begin{proof}
Assume that Assumption \ref{ref:mainass} (B)  holds. Then using point (ii) of Lemma \ref{lm:logest} we see that there exist constants $A,B$ such that
\[ \forall x \in \RD \sup_{t \in [\varepsilon,1-\varepsilon]}|\nabla \log g_t -\nabla \log f_t |^2 \leq A+B|x|^2.\]
The conclusion then follows from the boundedness of $f_t,g_t$ (see (i) in Lemma \ref{lm:logest}) and the fact that $\inv$ has Gaussian tails. If (A) holds, the conclusion follows directly from Lemma \ref{lm:compactcase}.\end{proof}
The next result is a representation for $(\mu_t)$ and its velocity field. It is a minor modification of the analogous results \cite[Sec. 5]{gentil2015analogy} and \cite[Sec. VI]{chen2014relation}, which build on the notion of \emph{current velocity} of a Markov diffusion process,  as introduced by Nelson in \cite{Nel67}. Thus, we postpone its proof the Appendix.
\begin{lemma}\label{lm:speed}
Let $f,g$ be given by \eqref{eq:fgdec} and $f_t,g_t,\tf_t,\tg_t$ by \eqref{def:fg},\eqref{def:tftg}. Then 
\be\label{eq:fgdect} \forall t \in [0,1], \quad d \mu_t = f_t g_t \, d\inv = \tf_t \tg_t d \vol  . \ee
Moreover, if the conclusion of Lemma \ref{lm:speedbound} holds,  $(\mu_t)$ is an absolutely continuous curve and its velocity field is 
\be\label{eq:velfield} (t,x) \mapsto \frac{\sigma}{2} \nabla(\log g_t - \log f_t) .\ee
\end{lemma}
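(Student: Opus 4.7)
The plan is to first compute the density of $\mu_t$ using the $fg$-decomposition of $\hat{\bQ}$, then derive the continuity equation by exploiting the parabolic PDEs solved by $f_t$ and $g_t$, and finally invoke the characterization of absolute continuity recalled in Section \ref{sec:2}.

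For the density identity, I would start from $\hat{\bQ} = f(X_0) g(X_1) \bP$ — valid under Assumption \ref{ref:mainass} by Lemmas \ref{lm:logest} and \ref{lm:compactcase} — and test it against an arbitrary bounded Borel $\phi$. The Markov property of $\bP$ makes $X_0$ and $X_1$ conditionally independent given $X_t$, so
\begin{equation*}
E_{\hat{\bQ}}[\phi(X_t)] = E_{\bP}[f(X_0) g(X_1) \phi(X_t)] = E_{\bP}[f_t(X_t) g_t(X_t) \phi(X_t)].
\end{equation*}
Stationarity of $\bP$ with invariant measure $\inv$ rewrites the right-hand side as $\int \phi f_t g_t \, d\inv$, yielding $d\mu_t = f_t g_t \, d\inv$; the second identity $f_t g_t \, d\inv = \tf_t \tg_t \, d\vol$ then follows from $d\inv = e^{-2U} d\vol$ and \eqref{def:tftg}.

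Next I would identify the evolution equations for $f_t$ and $g_t$. Reversibility of $\bP$ with respect to $\inv$ yields $f_t = P_t f$ for the $\scrL$-semigroup $(P_t)$, so $\partial_t f_t = \scrL f_t$; the plain Markov property gives $g_t = P_{1-t} g$, hence $\partial_t g_t = -\scrL g_t$. Writing $\scrL$ in divergence form against $\inv$ as $\scrL h = \tfrac{\sigma}{2} e^{2U} \nabla \cdot(e^{-2U} \nabla h)$, a short calculation produces
\begin{equation*}
\partial_t(f_t g_t e^{-2U}) = \tfrac{\sigma}{2}\bigl[g_t \nabla \cdot(e^{-2U}\nabla f_t) - f_t \nabla \cdot(e^{-2U}\nabla g_t)\bigr].
\end{equation*}
On the other hand, setting $v_t := \tfrac{\sigma}{2} \nabla(\log g_t - \log f_t)$ one has $\mu_t v_t = \tfrac{\sigma}{2} e^{-2U}(f_t \nabla g_t - g_t \nabla f_t)$; expanding $\nabla \cdot(\mu_t v_t)$ and cancelling the two copies of the cross term $e^{-2U}\nabla f_t \cdot \nabla g_t$ shows that this divergence equals the right-hand side above with opposite sign. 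Hence $(\mu_t, v_t)$ solves the continuity equation \eqref{eq:coneq}, pointwise and therefore in the distributional sense.

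To upgrade to absolute continuity of $(\mu_t)$ with velocity field $v_t$, I would use the converse criterion recalled after \eqref{eq:coneq}: it suffices to check that $v_t$ lies in the $L^2_{\mu_t}$-closure of $\{\nabla \varphi : \varphi \in \cC^c_\infty\}$ for a.e. $t$, and that $\int_\veps^{1-\veps} |v_t|_{L^2_{\mu_t}} dt < \infty$ for every $\veps \in (0,1)$. The second item is exactly the conclusion of Lemma \ref{lm:speedbound}, since $|v_t|^2_{L^2_{\mu_t}} = \tfrac{\sigma^2}{4} \int |\nabla \log g_t - \nabla \log f_t|^2 f_t g_t \, d\inv$. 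The first is immediate in case (A) of Assumption \ref{ref:mainass} where $\cC^c_\infty = \cC_\infty$; in case (B) the smoothness of $v_t$, combined with the Gaussian-type tails of $\mu_t$ and the bound of Lemma \ref{lm:speedbound}, allows a standard cut-off approximation. The main technical obstacle is the divergence-form bookkeeping leading to the continuity equation and the closure check in the non-compact Ornstein--Uhlenbeck case; once these are in place, the absolute continuity statement and the identification of the velocity field follow by the uniqueness of the gradient-valued velocity field.
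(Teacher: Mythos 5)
Your proof is correct and follows essentially the same route as the paper: derive $d\mu_t = f_t g_t\, d\inv$ from the Markov property and stationarity, use the forward/backward Kolmogorov equations for $f_t,g_t$ to check the continuity equation for $(\mu_t,v_t)$ pointwise, and conclude absolute continuity from the $L^2_{\mu_t}$-bound of Lemma \ref{lm:speedbound} plus the gradient structure of $v_t$. The only cosmetic difference is that you keep $f_t,g_t$ and write $\scrL$ in divergence form against $\inv$, whereas the paper switches to $\tf_t = e^{-U}f_t$, $\tg_t = e^{-U}g_t$, which converts $\scrL$ into $\tfrac{1}{2}\Delta - \scrU$ and makes the divergence bookkeeping a one-line Laplacian manipulation — both computations are equivalent after the change of variables.
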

The following Lemma gives a sufficient condition for the existence of the covariant derivative and its explicit form. It is a slight rearrangement of Example 6.7 in \cite{ambrosio2013user} and Equation (3.3) in \cite{giglisecond}, the only difference being that we do not assume that $\xi_t$ has compact support. Its proof is in the Appendix. 
\begin{lemma}\label{covident}
	Let $(\mu_t)$ be a regular curve, $(v_t)$ its velocity field and $(\xi_t)$ a $\cC_{\infty}$ vector field along $(\mu_t)$. If
	\be\label{eq:covident1} \forall \veps \in (0,1) \quad \sup_{ t \in [\varepsilon,1-\varepsilon]} |\partial_t \xi_t + \levciv_{v_t} \xi_t|_{\tsp_{\mu_t}}<+\infty,   \ee
	then $(\xi_t)$ is absolutely continuous along $(\mu_t)$ in the sense of Definition \ref{def:smooth} and
	\be\label{eq:covident2} \forall t \in (0,1), \quad \quad  \frac{\bd}{dt} \xi_t = \partial_t \xi_t + \levciv_{v_t} \xi_t .  \ee
	If $\xi_t=v_t$ then the we also have 
	\be\label{eq:covident3} \forall t \in (0,1), \quad \quad \frac{\bD}{dt} v_t = \frac{\bd}{dt} v_t = \partial_t v_t + \frac{1}{2} \nabla |v_t|^2 . \ee
\end{lemma}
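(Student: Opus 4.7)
The plan is to reduce everything to differentiating the $L^2$-valued map $t\mapsto \tau^{t_0}_t(\xi_t)$, by combining the group property \eqref{eq:grouppty}, the isometry \eqref{eq:isometry} of $\tau^t_s$, and a pointwise computation of $\frac{d}{ds}\bigl|_{s=t}\tau^t_s(\xi_s)$.

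First I would carry out the pointwise computation. By Definition \ref{def:trmap2}, one has $\tau^t_s(\xi_s)(x)=(\tau_x)^t_s(\xi_s(\bT(t,s,x)))$, where $(\tau_x)^t_s$ is parallel transport along the smooth curve $r\mapsto \bT(t,r,x)$ whose velocity at $r$ is $v_r(\bT(t,r,x))$ by \eqref{eq:flowmaps}. Since differentiating parallel transport applied to a smooth time-dependent section along a smooth curve produces the covariant derivative along the curve (pulled back to $T_xM$), and since at $s=t$ the transport $(\tau_x)^t_t=\id$ and $\bT(t,t,x)=x$, I obtain
$$\frac{d}{ds}\Big|_{s=t}\tau^t_s(\xi_s)(x) \;=\; \partial_t\xi_t(x)+\levciv_{v_t(x)}\xi_t(x).$$

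Next I would pass to the $L^2$ statement. Using $\tau^{t_0}_s=\tau^{t_0}_r\circ \tau^r_s$ and the isometry \eqref{eq:isometry} of $\tau^{t_0}_r$,
$$|\tau^{t_0}_s(\xi_s)-\tau^{t_0}_r(\xi_r)|_{L^2_{\mu_{t_0}}} \;=\; |\tau^r_s(\xi_s)-\xi_r|_{L^2_{\mu_r}}.$$
Combining the pointwise derivative formula above with smoothness of $(t,x)\mapsto \xi_t(x)$ and of the flow maps yields, via a fundamental-theorem-of-calculus argument in $L^2_{\mu_r}$ (using once more that $\tau^r_u$ is an isometry $L^2_{\mu_u}\to L^2_{\mu_r}$), the estimate
$$|\tau^r_s(\xi_s)-\xi_r|_{L^2_{\mu_r}} \;\le\; \int_r^s |\partial_u\xi_u+\levciv_{v_u}\xi_u|_{L^2_{\mu_u}}\,du,$$
which is absolutely continuous in $(r,s)$ thanks to assumption \eqref{eq:covident1}. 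This establishes absolute continuity of $t\mapsto \tau^{t_0}_t(\xi_t)$ in $L^2_{\mu_{t_0}}$ in the sense of Definition \ref{def:smooth}, and taking the limit $h\to 0$ in Definition \ref{def:totder} identifies $\frac{\bd}{dt}\xi_t=\partial_t\xi_t+\levciv_{v_t}\xi_t$, giving \eqref{eq:covident2}.

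Finally, for $\xi_t=v_t$, I would use that $v_t\in\overline{\{\nabla\varphi:\varphi\in\cC^c_\infty\}}^{L^2_{\mu_t}}$ together with the pointwise identity $\levciv_{\nabla\varphi}\nabla\varphi=\tfrac12\nabla|\nabla\varphi|^2$ (immediate from symmetry of the Hessian) extended to the closure, to deduce $\levciv_{v_t}v_t=\tfrac12\nabla|v_t|^2$. Since both $\partial_t v_t$ and $\tfrac12\nabla|v_t|^2$ are gradient-like, $\frac{\bd}{dt}v_t$ already lies in $\tsp_{\mu_t}$, so the projection $P_{\mu_t}$ in Definition \ref{def:covder} acts as the identity and \eqref{eq:covident3} follows. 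The main obstacle I expect is the rigorous $L^2$-valued fundamental theorem of calculus step: converting the pointwise $s$-derivative identity into the uniform $L^2_{\mu_r}$-integral bound on $|\tau^r_s(\xi_s)-\xi_r|_{L^2_{\mu_r}}$ requires careful dominated convergence along the flow on $[\veps,1-\veps]$, leveraging smoothness of $(\xi_t)$ and of the flow maps together with the integrability furnished by \eqref{eq:covident1}.
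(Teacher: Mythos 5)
Your proposal is correct and follows essentially the same route as the paper: a pointwise differentiation of $s\mapsto \tau^t_s(\xi_s)(x)$ using the flow maps and parallel transport to identify $\partial_t\xi_t+\levciv_{v_t}\xi_t$, followed by the group property, the isometry of $\tau^{t_0}_r$, and an $L^2$-valued fundamental theorem of calculus (the paper uses Jensen and a supremum bound rather than your slightly tighter integral bound) to establish absolute continuity and identify $\tfrac{\bd}{dt}\xi_t$. Your treatment of \eqref{eq:covident3}, reducing to $\levciv_{v_t}v_t=\tfrac12\nabla|v_t|^2$ and noting the projection $P_{\mu_t}$ is the identity on gradients, is likewise the paper's argument.
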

In this Lemma, we establish that $(\mu_t)$ has the desired regularity properties.
\begin{lemma}\label{lm:regcurv}
Let Assumption \ref{ref:mainass} hold and $(\mu_t)$ be the marginal flow of \nameref{schrbr}. Then we have
\begin{enumerate}[(i)]
\item $(\mu_t)$ is a regular curve.
\item The velocity field $(v_t)$ is such that
\be\label{eq:regcurv3} \forall \veps \in (0,1) \quad  \sup_{t \in [\varepsilon,1-\varepsilon]}  \big|  \partial_t v_t + \frac{1}{2} \nabla |v_t|^2 \big|_{\tsp_{\mu_t}}<+\infty . \ee
\end{enumerate}
\end{lemma}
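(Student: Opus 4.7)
By Lemma \ref{lm:speed}, $\mu_t = f_t g_t \inv$ and $v_t = \tfrac12 \nabla(\log g_t - \log f_t)$. The plan is to reduce both assertions to uniform pointwise bounds on the derivatives of $\log f_t$ and $\log g_t$ up to order three on $D_\veps$; such bounds are furnished by Lemma \ref{lm:compactcase} under Assumption (A) and by Lemma \ref{lm:logest} under Assumption (B).

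For (i), the $L^2_{\mu_t}$ integrability \eqref{eq:myreg1} of $v_t$ is exactly the content of Lemma \ref{lm:speedbound}. For \eqref{eq:myreg2}, since $v_t$ is smooth,
\[ \mathrm{L}(v_t) \;=\; \sup_{x \in M} |\levciv v_t(x)|_{op} \;=\; \tfrac12 \sup_{x \in M} |\mathbf{Hess}_x(\log g_t - \log f_t)|_{op}, \]
and uniform boundedness of the right hand side in $t \in [\veps,1-\veps]$ follows from the uniform Hessian estimates on $\log f_t$ and $\log g_t$ provided by Lemma \ref{lm:compactcase} in case (A) and by Lemma \ref{lm:logest} in case (B).

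For (ii), I would write $v_t = \nabla \varphi_t$ with $\varphi_t := \tfrac12(\log g_t - \log f_t)$, so that
\[ \partial_t v_t + \tfrac12 \nabla |v_t|^2 \;=\; \nabla\bigl(\partial_t \varphi_t + \tfrac12|\nabla \varphi_t|^2\bigr). \]
Using the dual semigroup identities $\partial_t f_t = \scrL f_t$ and $\partial_t g_t = -\scrL g_t$, combined with the algebraic identity $\scrL h/h = \tfrac12 \Delta \log h + \tfrac12|\nabla \log h|^2 - \nabla U \cdot \nabla \log h$, the scalar $\partial_t \varphi_t + \tfrac12|\nabla \varphi_t|^2$ reduces to a polynomial in $\Delta \log f_t$, $\Delta \log g_t$, $|\nabla \log f_t|^2$, $|\nabla \log g_t|^2$, $\nabla \log f_t \cdot \nabla \log g_t$, and $\nabla U \cdot \nabla \log f_t$, $\nabla U \cdot \nabla \log g_t$. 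Applying $\nabla$ produces an expression involving derivatives of $\log f_t$, $\log g_t$ of order at most three together with the first two derivatives of $U$. Every such term is controlled uniformly on $D_\veps$ by Lemma \ref{lm:compactcase} or Lemma \ref{lm:logest}; squaring and integrating against $\mu_t$ then yields the desired $L^2_{\mu_t}$ bound \eqref{eq:regcurv3}.

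The main obstacle is Assumption (B): the non-compactness of $\RD$ means the derivative bounds from Lemma \ref{lm:logest} come with polynomial-in-$|x|$ prefactors, so the passage from pointwise to $L^2_{\mu_t}$ estimates must be obtained by pairing those polynomial bounds against the Gaussian tails of $\inv$ (here the quadratic form of $U(x)=\tfrac{\alpha}{2}|x|^2$ enters crucially) together with the $L^\infty$ control of $\tf_t$ and $\tg_t$ on $D_\veps$ supplied by Lemma \ref{lm:logest}(i). Under (A), compactness promotes every pointwise estimate to an $L^2_{\mu_t}$ estimate for free.
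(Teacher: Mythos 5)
Your proof is correct and follows essentially the same route as the paper: reduce everything to the representation $v_t = \tfrac12\nabla(\log g_t - \log f_t)$ from Lemma \ref{lm:speed}, obtain \eqref{eq:myreg1} from Lemma \ref{lm:speedbound}, bound $\mathrm{L}(v_t)$ via the operator norm of $\mathbf{Hess}(\log g_t-\log f_t)$, and use the Kolmogorov/HJB equations to express $\partial_t v_t + \tfrac12\nabla|v_t|^2$ in third-order derivatives of $\log f_t,\log g_t$ controlled by Lemma \ref{lm:compactcase} in case (A) or by the polynomial estimates of Lemma \ref{lm:logest} paired with the Gaussian tails of $\inv$ in case (B). This is exactly the structure of the paper's argument, so no comparison is needed.
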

\begin{proof}
First, observe that, thanks to Lemma \ref{lm:speed}, we know that $v_t = \frac{1}{2}\nabla (\log g_t - \log f_t) $ and that $d \mu_t = f_t g_t d \inv$. Using this, if Assumption \ref{ref:mainass} (A) holds, the conclusion follows from Lemma \ref{lm:compactcase}. Therefore, let us assume that (B) holds. To prove that $(\mu_t)$ is regular we have to show \eqref{eq:myreg1} and \eqref{eq:myreg2}.  Therefore
\[\forall t \in (0,1), \quad | v_t |^2_{\tsp_{\mu_t}}  =\frac{\sigma^2}{4} \int_{M} |\nabla \log g_t -\nabla \log f_t |^2 f_t g_t d \inv \]
Using this identity, \eqref{eq:myreg1} follows from Lemma \ref{lm:speedbound}. To prove \eqref{eq:myreg2} observe that, since $v_t$ is of class $\cC_{\infty}$, we have that \[\text{L}(v_t) = \sup_{\substack{x\in \RD \\ w:|w|=1}} |\levciv_w v_t(x)|. \]
Using \eqref{eq:velfield}, we have for any $\veps \in (0,1)$:
\bea\label{eq:regcurv1} \sup_{ \substack{ t\in [\varepsilon,1-\varepsilon] \\ x \in \RD}} \sup_{w:|w|=1} |\nabla_w v_t(x)| \leq \frac{1}{2}  \sup_{ \substack{ t\in [\varepsilon,1-\varepsilon] \\ x \in \RD}} \sup_{\substack{w_2,w_1 \\ |w_2|,|w_1| \leq 1 }}|\partial_{w_2} \partial_{w_1} \log f_t | \\
\nonumber
+  \frac{1}{2}\sup_{ \substack{ t\in [\varepsilon,1-\varepsilon] \\ x \in \RD}} \sup_{\substack{w_2,w_1 \\ |w_2|,|w_1| \leq 1 }}|\partial_{w_2} \partial_{w_1} \log g_t |.\eea
Point (iii) of Lemma \ref{lm:logest} then immediately yields
\[ \sup_{t \in [\varepsilon,1-\varepsilon] } \text{L}(v_t) <+ \infty, \]
which proves \eqref{eq:myreg2} and that  $(\mu_t)$ is a regular curve. Let us now turn to the proof of \eqref{eq:regcurv3}. First, we observe that $\log f_t,\log g_t $ are classical solutions on $D_{\varepsilon} $ of the HJB equation
\be\label{eq:HJB}
\partial_t \log f_t = \scrL \log f_t + \frac{1}{2} |\nabla \log f_t|^2, \quad \partial_t \log f_t = -\scrL \log g_t- \frac{1}{2} |\nabla \log g_t|^2.
\ee
 Then, combining \eqref{eq:velfield} with point (ii) of Lemma \ref{lm:logest} and the HJB equation, we obtain that there exist constants $A,B$ such that uniformly on $D_{\veps}$
 \[ \Big| \partial_t v_t + \frac{1}{2} \nabla |v_t |^2 \Big|^2(x) \leq A+ B|x|^6. \]
 Moreover, point (i) in Lemma \ref{lm:logest} makes sure that $\sup_{D_{\veps}}f_t g_t\leq C$ for some $C<+\infty$. Thus,
\beas \sup_{t \in [\veps,1-\veps]} \int_M \Big| \partial_t v_t + \frac{1}{2} \nabla |v_t |^2 \Big|^2 d \mu_t &\stackrel{\eqref{eq:fgdect}}{=}& \sup_{t \in [\veps,1-\veps]}\int_M \Big| \partial_t v_t + \frac{1}{2} \nabla |v_t |^2 \Big|^2 f_t g_t  d \inv \\
&\leq & C \sup_{t \in [\veps,1-\veps]}\int_M A+B|x|^6   d \inv <+\infty,
 \eeas
 where to obtain the last inequality we used that $\inv$ has Gaussian tails. The desired conclusion follows. 
 \end{proof}

Let us now prove Theorem \ref{thm:2ndorderODE}.

\begin{proof}
Let $\varepsilon \in (0,1)$ and Assumption \ref{ref:mainass} hold. This entitles us to apply Lemma \ref{lm:speedbound} and \ref{lm:speed} to conclude that the velocity field of $(\mu_t)$ is given by \eqref{eq:velfield}. Lemma \ref{lm:regcurv} makes sure that $(\mu_t)$ is regular and that \eqref{eq:covident1} holds for $\xi_t=v_t$. Therefore we can apply Lemma \ref{covident} to conclude that $(v_t)$ is absolutely continuous and its covariant derivative is 
\[\frac{\bD}{dt} v_t = \partial_t v_t + \frac{1}{2} \nabla |v_t|^2 . \]
Next, we shall prove that 
\[ \partial_t v_t + \frac{1}{2} \nabla |v_t|^2 = \otgrad \scrI_U (\mu_t),\]
which concludes the proof. To do this, recall that from Lemma \ref{lm:speed}
\[ v_t = \frac{1}{2}\nabla( \log g_t - \log f_t) = \frac{1}{2}\nabla( \log \tg_t - \log \tf_t),\]
and that $\tf_t,\tg_t$ are classical solutions of \eqref{eq:FKpde} over $D_{\veps}$. Therefore, by taking $\log$ and using the positivity of $\tf_t,\tg_t$ we get 
\be\label{eq:HJBt} \partial_t \log \tf_t = \frac{1}{2}\Delta \log \tf_t + \frac{1}{2}|\nabla \log \tilde{f}_t|^2 -\scrU, \quad  \partial_t \log \tg_t=- \frac{1}{2}\Delta \log \tg_t -\frac{1}{2}|\nabla \log \tilde{g}_t|^2 +\scrU, \ee
where $\scrU$ is defined at \eqref{eq:fishgrad}. Combining these facts, we have
\beas
\partial_t v_t + \frac{1}{2}\nabla |v_t|^2 &=& -\frac{1}{2} \nabla \partial_t \log \tilde{f}_t +\frac{1}{2} \nabla \partial_t \log \tg_t + \frac{1}{2} \nabla \Big|\frac{1}{2} \nabla \log \tilde{g}_t - \frac{1}{2} \nabla \log \tilde{f}_t \Big|^2\\
&\stackrel{\eqref{eq:HJBt}}{=}& -\frac{1}{2} \nabla \big( \frac{1}{2}\Delta \log \tilde{f}_t + \frac{1}{2} |\nabla \log \tilde{f}_t|^2 -\scrU\big) \\
&+& \frac{1}{2} \nabla \big( - \frac{1}{2}\Delta \log \tilde{g}_t - \frac{1}{2} |\nabla \log \tilde{g}_t|^2 +\scrU \big)  \\
&+& \frac{1}{2} \nabla \big( \frac{1}{4}|\nabla \log \tilde{f}_t|^2 - \frac{1}{2} \langle \nabla \log \tilde{f}_t , \nabla \log \tilde{g}_t \rangle + \frac{1}{4} |\nabla \log \tilde{g}_t |^2  \big)\\
&=& -\frac{1}{4}\nabla (\Delta \log \tilde{f}_t+ \Delta\log \tilde{g}_t) -\frac{1}{8}\nabla \big ( |\nabla (\log \tilde{f}_t + \log \tilde{g}_t )|^2\big)+ \nabla \scrU \\
&\stackrel{\eqref{def:tftg}}{=}& -\frac{1}{4}\nabla \Delta \log \mu_t -\frac{1}{8}\nabla  |\nabla \log \mu_t |^2+ \nabla \scrU  \\
&\stackrel{\eqref{eq:fishgrad}}{=}& \frac{1}{8} \otgrad\scrI_U  (\mu_t).
\eeas 
\end{proof}
Finally, we prove Theorem \ref{thm:2ndorderODEb}. Given the proof of Theorem \ref{thm:2ndorderODE} and the Lemmas proven above, this proof is quite straightforward.
\begin{proof}
One can check as in the proof of Theorem \ref{thm:2ndorderODE} that $\frac{1}{2} \nabla (\log f_t - \log g_t) $ solves the continuity equation in the classical sense. Assumption \eqref{eq:2ndorderODE} ensures enough integrability to conclude that this vector field is indeed the velocity field of $(\mu_t)$. Using this representation of the velocity field, assumption \eqref{eq:2ndorderODEb2} also ensures that $(\mu_t)$ is a regular curve. Repeating the same calculation as in the proof of Theorem \ref{thm:2ndorderODE} we can prove that
\bes
\partial_t v_t + \frac{1}{2} \nabla |v_t|^2 = \frac{1}{8} \otgrad  \fishu .
\ees
Then, \eqref{eq:2ndorderODE} enables to apply \ref{covident} to conclude the covariant derivative $\frac{\bD}{dt}v_t$ exists and it has the desired form.
\end{proof}
\subsection{Proof of Theorem \ref{thm:entropybound}} 

\subsubsection{ First proof of (i) $\Rightarrow $ (ii)}
Let us note that the hypothesis of the Theorem imply Assumption \ref{ref:mainass}(B). Thus, we can use all the Lemmas proven in the previous section, as well as Theorem \ref{thm:2ndorderODE} and Lemma \ref{lm:compactcase} from the Appendix.
 The proof relies on the preparatory Lemmas \ref{lm:costmoto}, \ref{lm:firstder} and \ref{lm:secondder}. In the first Lemma we compute an expression for $c(\mu,\nu)$ in terms of $\tf_t,\tg_t$. 
\begin{lemma}\label{lm:costmoto}
	Let $c(\mu,\nu)$ be defined via \eqref{eq:encons1}. Then
\[\forall \, 0 < t < 1,\quad  c(\mu,\nu) =  -\int_M \,\nabla \log \tilde{f}_t \cdot \nabla \log \tilde{g}_t + 2 \scrU \, d \mu_t \]
\end{lemma}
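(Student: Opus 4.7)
The plan is to derive the identity directly from Corollary~\ref{cor:encons} (the conservation law), combined with the explicit representation of the velocity field $v_t$ from Lemma~\ref{lm:speed}. Recall that with $\sigma=1$ the conservation law reads
\[ c(\mu,\nu) = |v_t|^2_{\tsp_{\mu_t}} - \tfrac{1}{4}\fishu(\mu_t), \]
which already shows that the right-hand side does not depend on $t$; the content of the lemma is therefore to rewrite this $t$-independent quantity in the claimed form.

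The first step is to plug in the two pieces of information provided by Lemma~\ref{lm:speed}: that $d\mu_t = \tf_t \tg_t\, d\vol$ and that $v_t = \tfrac{1}{2}\nabla(\log \tg_t - \log \tf_t)$. Setting $a := \nabla \log \tf_t$, $b := \nabla \log \tg_t$, these give
\[ |v_t|^2_{\tsp_{\mu_t}} = \tfrac{1}{4}\int |b-a|^2\, d\mu_t, \qquad \nabla \log \mu_t = a + b, \]
so that, from the definition \eqref{eq:fishdef} of $\fishu$,
\[ \fishu(\mu_t) = \int |a+b+2\nabla U|^2\, d\mu_t. \]

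The second step is a short algebraic expansion: setting $c := \nabla U$, one computes
\[ |b-a|^2 - |a+b+2c|^2 = -4\, a\cdot b - 4\, c\cdot(a+b) - 4|c|^2, \]
so that
\[ |v_t|^2_{\tsp_{\mu_t}} - \tfrac{1}{4}\fishu(\mu_t) = -\int \nabla \log \tf_t \cdot \nabla \log \tg_t \, d\mu_t -\int \nabla U \cdot \nabla \log \mu_t \, d\mu_t - \int |\nabla U|^2\, d\mu_t. \]

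The final step is to apply integration by parts to the middle term: since $\nabla \log \mu_t\, d\mu_t = \nabla \mu_t\, d\vol$,
\[ \int \nabla U \cdot \nabla \log \mu_t\, d\mu_t = \int \nabla U \cdot \nabla \mu_t\, d\vol = -\int \Delta U\, d\mu_t, \]
so that the last two terms combine to $-\int (|\nabla U|^2 - \Delta U)\, d\mu_t = -\int 2\scrU\, d\mu_t$, yielding the claim.

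The main step requiring care is the integration by parts. In case (A) of Assumption~\ref{ref:mainass} there is no boundary contribution since $M$ is compact; in case (B), $\nabla U$ grows linearly, $\mu_t$ has a bounded density against $\inv$ (which has Gaussian tails, by Lemma~\ref{lm:logest}(i) combined with $d\mu_t = f_t g_t\, d\inv$), and $\nabla \mu_t$ has controlled growth by Lemma~\ref{lm:logest}(ii), so the formula is justified and no boundary term appears. The remainder of the argument is purely algebraic and presents no obstacle.
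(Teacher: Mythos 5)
Your proof is correct and follows essentially the same route as the paper's: start from the conservation law of Corollary~\ref{cor:encons}, substitute $v_t=\tfrac{1}{2}\nabla(\log\tg_t-\log\tf_t)$ and $\nabla\log\mu_t=\nabla\log\tf_t+\nabla\log\tg_t$ from Lemma~\ref{lm:speed}, expand the squares, and integrate by parts to convert $\int\nabla U\cdot\nabla\log\mu_t\,d\mu_t$ into $-\int\Delta U\,d\mu_t$. The only mild difference is presentational: you do the algebraic expansion in one step with the shorthand $a,b,c$ and explicitly discuss the integration-by-parts justification in both cases (A) and (B), whereas the paper performs the expansion in stages and cites Lemma~\ref{lm:compactcase} for the compact case only.
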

\begin{proof}
By Corollary \ref{cor:encons},
\[ c(\mu,\nu) =\frac{1}{\sigma^2} | v_t |^2_{\tsp_{\mu_t}} - \frac{1}{4} \fish(\mu_t) \]
We can exploit Lemma \ref{lm:speed} and an integration by parts under $\vol$ (justified by Lemma \ref{lm:compactcase}) to obtain
\beas
\frac{1}{\sigma^2}| v_t |^2_{\tsp_{\mu_t}} - \frac{1}{4} \fish(\mu_t)=\int_M|  \frac{1}{2} \nabla \log \tilde{g}_t-\frac{1}{2} \nabla \log \tilde{f}_t |^2 - \frac{1}{4}| \nabla \log \mu_t + 2\nabla U |^2  d\mu_t \\
= \int_{M} \frac{1}{4}|   \nabla \log \tilde{g}_t - \nabla \log \tilde{f}_t|^2 - \frac{1}{4}|  \nabla \log \tilde{f}_t +  \nabla \log \tilde{g}_t|^2 - \nabla \log \mu_t \cdot \nabla U -|\nabla U|^2 d \mu_t \\
=\int_{M} -\nabla \log \tilde{f}_t \cdot \nabla \log \tilde{g}_t - |\nabla U|^2  d \mu_t - \int_M \nabla  \mu_t \cdot \nabla U d \vol \\
= \int_{M} -\nabla \log \tilde{f}_t \cdot \nabla \log \tilde{g}_t + \Delta U- |\nabla U|^2 d \mu_t \\
= - \int_{M} \nabla \log \tilde{f}_t \cdot \nabla \log \tilde{g}_t + 2 \scrU \, d \mu_t ,
\eeas	
which is the desired conclusion.
\end{proof}
In the second Lemma we compute the first derivative of the entropy along $\text{SB}(\mathscr{L},\mu,\nu)$. 
\begin{lemma}\label{lm:firstder}
There exist functions $\fwh$ and $\bwh$ such that:
\[ \ent(\mu_t) = \fwh(t)+ \bwh(t)  \]	and for all $0<t<1$ 
\[ \partial_t \fwh(t) = -\frac{1}{2\sigma} | v_t -\frac{\sigma}{2} \otgrad \ent(\mu_t) |^2_{\tsp_{\mu_t}}  \quad \partial_t \bwh(t) = \frac{1}{2\sigma} | v_t + \frac{\sigma}{2} \otgrad \ent(\mu_t) |^2_{\tsp_{\mu_t}}. \]
\end{lemma}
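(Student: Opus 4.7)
The natural decomposition to try is the one induced by the $fg$ representation \eqref{eq:fgdec}: set
\[ h_f(t) := \int_M \log f_t \, d\mu_t, \qquad h_b(t) := \int_M \log g_t \, d\mu_t. \]
Since $\log\mu_t = \log f_t + \log g_t - 2U$ by \eqref{eq:fgdect}, one automatically has $h_f(t)+h_b(t) = \int_M(\log \mu_t + 2U)\,d\mu_t = \ent(\mu_t)$, so the additive decomposition is correct by construction and the whole task reduces to computing the two derivatives. The roles of $f_t$ and $g_t$ being symmetric, with one satisfying the forward and the other the backward HJB equation in \eqref{eq:HJB}, it is enough to explain the computation for $h_f$ in detail.

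I would proceed by differentiating under the integral sign. Writing $d\mu_t = f_t g_t\, d\inv$ and applying the continuity equation \eqref{eq:coneq} in weak form to the test function $\log f_t$,
\[ \partial_t h_f(t) = \int_M \partial_t \log f_t \, d\mu_t + \int_M \nabla \log f_t \cdot v_t\, d\mu_t. \]
Substituting the HJB equation \eqref{eq:HJB} for $\log f_t$ and the expression $v_t = \tfrac{1}{2}\nabla(\log g_t - \log f_t)$ from Lemma \ref{lm:speed} (using $\sigma=1$ as elsewhere in the proofs), the two quadratic terms in $|\nabla \log f_t|^2$ combine to yield
\[ \partial_t h_f(t) = \int_M \scrL \log f_t \, d\mu_t + \frac{1}{2} \int_M \nabla \log f_t\cdot \nabla \log g_t\, d\mu_t. \]
The key step is then a second integration by parts: expanding $\scrL \log f_t = \tfrac{1}{2}\Delta \log f_t - \nabla U\cdot \nabla \log f_t$ and integrating $\Delta \log f_t$ against the density $\mu_t$ with respect to $\vol$ produces the bulk term $-\int \nabla \log f_t \cdot \nabla \log \mu_t \, d\mu_t$; using $\nabla \log\mu_t = \nabla \log f_t + \nabla \log g_t - 2\nabla U$ yields
\[ \int_M \scrL \log f_t\, d\mu_t = -\frac{1}{2}\int_M |\nabla \log f_t|^2 \,d\mu_t - \frac{1}{2}\int_M \nabla \log f_t \cdot \nabla \log g_t\,d\mu_t, \]
so that the cross term cancels and $\partial_t h_f(t) = -\tfrac{1}{2}\int_M |\nabla \log f_t|^2\,d\mu_t$. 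To identify this with the claimed expression, one uses \eqref{eq:entgrad} together with $\log \mu_t = \log f_t + \log g_t - 2U$ to get $\otgrad\ent(\mu_t) = \nabla \log f_t + \nabla \log g_t$, which gives $v_t - \tfrac{1}{2}\otgrad\ent(\mu_t) = -\nabla \log f_t$. The computation for $h_b$ is entirely parallel, with the backward HJB equation producing the opposite sign in the final cancellation and the relation $v_t + \tfrac{1}{2}\otgrad\ent(\mu_t) = \nabla \log g_t$ used at the end.

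The main obstacle is not the algebra but rather the justification of the two integrations by parts, namely the weak form of the continuity equation applied to the unbounded test function $\log f_t$, and the identity $\int \Delta \log f_t\, d\mu_t = -\int \nabla \log f_t \cdot \nabla \log \mu_t \, d\mu_t$. Under Assumption \ref{ref:mainass}(A) the manifold $M$ is compact and both are immediate from Lemma \ref{lm:compactcase}. Under Assumption \ref{ref:mainass}(B) one works on $\RD$ and must check that the boundary terms at infinity vanish: here the polynomial growth estimates on $\log f_t, \log g_t$ and their first two derivatives supplied by Lemma \ref{lm:logest}, combined with the Gaussian decay of $\inv$, control the relevant integrals exactly as in the proof of Lemma \ref{lm:regcurv}.
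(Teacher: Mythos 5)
Your proposal is correct, and it uses the same decomposition as the paper (after noting that $\log\tf_t+U=\log f_t$, so the paper's $\fwh(t)=\int(\log\tf_t+U)\,d\mu_t$ coincides with your $\int\log f_t\,d\mu_t$), but the way you arrive at the derivative formula is genuinely more direct. The paper writes $\fwh(t)=\tfrac{1}{2}\ent(\mu_t)-\scrE_{\frac12\log\tg_t-\frac12\log\tf_t}(\mu_t)$, differentiates each piece, identifies the resulting energy term through the auxiliary Lemma \ref{lm:costmoto} as $-\tfrac{\sigma}{2}c(\mu,\nu)$, and then invokes the conservation law of Corollary \ref{cor:encons} (itself a consequence of the second-order ODE of Theorem \ref{thm:2ndorderODE}) to trade $c(\mu,\nu)$ for $\tfrac{1}{2\sigma}|v_t|^2-\tfrac{\sigma}{8}\fish(\mu_t)$ before completing the square. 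You instead substitute the HJB equation \eqref{eq:HJB} directly, integrate the generator term by parts against $\mu_t$, and observe that the cross-terms in $\nabla\log f_t\cdot\nabla\log g_t$ cancel, leaving $-\tfrac{1}{2}\int|\nabla\log f_t|^2\,d\mu_t$ on the nose, which is the claimed expression since $v_t-\tfrac{1}{2}\otgrad\ent(\mu_t)=-\nabla\log f_t$. This buys you independence from Theorem \ref{thm:2ndorderODE}, Corollary \ref{cor:encons}, and Lemma \ref{lm:costmoto}, so the chain of dependencies is shorter and the cancellation is more visible; the paper's route has the advantage of reusing the geometric machinery it has already built (the conservation law, which is of interest in its own right) and of highlighting the role of the constant $c(\mu,\nu)=\entcost(\mu,\nu)$ in the subsequent estimates. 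Your remarks on justifying the integrations by parts via Lemma \ref{lm:compactcase} in case (A) and via Lemma \ref{lm:logest} plus Gaussian decay in case (B) are exactly the points that need care, and they match what the paper itself relies on.
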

\begin{proof}
First, observe that equation \eqref{eq:fgdect} combined with Lemma \ref{lm:compactcase} make sure that all the exchanges of integral and derivatives which follow are justified and all the expressions make sense (i.e. all integrals are finite). We have
\bea\label{eq:hfhb} \ent (\mu_t) &\stackrel{\eqref{eq:ent},\eqref{eq:fgdect}}{=}&  \int_{M} \tilde{f}_t \tilde{g}_t (\log \tilde{f}_t \tilde{g}_t+2U) d\mu_t\\ \nonumber &=& \underbrace{\int_{M} \tilde{f}_t \tilde{g}_t (\log \tilde{f}_t+U) d\mu_t }_{:= \fwh(t)}+\underbrace{\int_{M} \tilde{f}_t \tilde{g}_t (\log  \tilde{g}_t+U) d\mu_t}_{:=\bwh(t)} . \eea	
We can write
\[ \fwh(t) = \frac{1}{2}\ent(\mu_t) - \mathscr{E}_{ \frac{1}{2} \log \tilde{g}_t - \frac{1}{2} \log \tilde{f}_t }(\mu_t), \]
where the definition of $\mathscr{E}_{ \frac{1}{2} \log \tilde{g}_t - \frac{1}{2} \log \tilde{f}_t }$ is given at \eqref{eq:endef}.
Therefore
\[ \partial_t \fwh (t)= \frac{1}{2}\langle \otgrad \ent(\mu_t) , v_t \rangle_{\tsp_{\mu_t}} -  \langle \frac{1}{2}\nabla(\log \tilde{g}_t -\log \tilde{f}_t),v_t\rangle_{\tsp_{\mu_t}} -\scrE_{\frac{1}{2}\partial_t
\big(\log \tilde{g}_t - \log \tilde{f}_t \big)} (\mu_t) \] 
By Lemma \ref{lm:speed} we have that $v_t = \frac{\sigma}{2}\nabla(\log \tilde{g}_t -\log \tilde{f}_t)$, so that 
\[\langle \nabla (\frac{1}{2}\log \tilde{g}_t -\frac{1}{2}\log \tilde{f}_t),v_t\rangle_{\tsp_{\mu_t}} =\frac{1}{\sigma} | v_t|^2_{\tsp_{\mu_t}}.\]
Moreover
\beas
 \scrE_{\frac{1}{2}\partial_t
\big(\log \tilde{g}_t - \log \tilde{f}_t \big)} (\mu_t) &=& \frac{1}{2}\int_M (\partial_t \log \tg_t - \partial_t \log \tf_t) d\mu_t \\ &\stackrel{\eqref{eq:fgdect}}{=}& \frac{1}{2}\int_M \partial_t \tg_t \tilde{f}_t -  \partial_t \tf_t \tg_t \, d\vol   \\
&\stackrel{ \sigma \times \eqref{eq:FKpde}}{=}& -\frac{\sigma}{4}\int_M \Delta \tilde{g}_t \tilde{f}_t + \Delta \tilde{f}_t \tilde{g}_t d\vol + \sigma \int_M 2\scrU  \tilde{f}_t \tilde{g}_t d\vol \\
&\stackrel{\text{Int. by parts}}{=}& \frac{\sigma}{2}\int_M (\nabla \log \tilde{g}_t \cdot \nabla \log \tilde{f}_t +2 \scrU ) d\mu_t \stackrel{\text{Lemma}\, \ref{lm:costmoto}}{=} -\frac{\sigma}{2} c(f,g).
 \eeas
 Therefore, we have
 \beas \partial_t \fwh (t) &=& \frac{1}{2}\langle \otgrad \ent(\mu_t) , v_t \rangle_{\tsp_{\mu_t}} - \frac{1}{\sigma}|v_t|^2_{\tsp_{\mu_t}}  +\frac{\sigma}{2} c(f,g)  \\
&
\stackrel{\text{Cor.} \,\ref{cor:encons}}{=}& \frac{1}{2}\langle \otgrad \ent(\mu_t) , v_t \rangle_{\tsp_{\mu_t}} - \frac{1}{2\sigma}|v_t|^2_{\tsp_{\mu_t}} -\frac{\sigma}{8}\fish(\mu_t) \\
 &\stackrel{\eqref{eq:fishent}}{=}&  \frac{1}{2}\langle \otgrad \ent(\mu_t) , v_t \rangle_{\tsp_{\mu_t}} - \frac{1}{2\sigma}|v_t|^2_{\tsp_{\mu_t}} -\frac{\sigma}{8} |\otgrad \ent (\mu_t)|^2_{\tsp_{\mu_t}} \\
 &=& -\frac{1}{2\sigma} | v_t - \frac{\sigma}{2}\otgrad \ent (\mu_t)|^2_{\tsp_{\mu_t}}.
 \eeas
 which is the desired conclusion. The proof of the other identity is analogous.
\end{proof}
In the last Lemma we compute the second derivative of the entropy.
\begin{lemma}\label{lm:secondder}
We have for all $0<t<1$  
\[ \partial_{tt}\fwh(t) = \frac{1}{2}  \Big\langle \hess_{\mu_t} \ent \Big(v_t-  \frac{\sigma}{2}\otgrad \ent(\mu_t)\Big), v_t- \frac{\sigma}{2} \otgrad \ent (\mu_t)\Big\rangle_{\tsp_{\mu_t}}, \]
and
\[ \partial_{tt} \bwh(t) = \frac{1}{2}  \Big\langle \hess_{\mu_t} \ent \Big(v_t+  \frac{\sigma}{2}\otgrad \ent(\mu_t)\Big), v_t+ \frac{\sigma}{2} \otgrad \ent (\mu_t)\Big\rangle_{\tsp_{\mu_t}}. \]
\end{lemma}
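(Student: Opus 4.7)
The strategy is to take the identities for $\partial_t\fwh$ and $\partial_t\bwh$ from Lemma \ref{lm:firstder}, write them as the squared $\tsp_{\mu_t}$-norm of a tangent vector field along $(\mu_t)$, and then differentiate using the compatibility of the metric \eqref{eq:metcomp}. Concretely, I will set
\[ w^{\pm}_t := v_t \pm \tfrac{\sigma}{2}\,\otgrad\ent(\mu_t), \]
so that Lemma \ref{lm:firstder} reads $\partial_t\fwh(t) = -\tfrac{1}{2\sigma}|w^{-}_t|^2_{\tsp_{\mu_t}}$ and $\partial_t\bwh(t) = \tfrac{1}{2\sigma}|w^{+}_t|^2_{\tsp_{\mu_t}}$. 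Differentiating once more and applying \eqref{eq:metcomp} to $w^{\pm}_t$ will give
\[ \partial_{tt}\fwh(t) = -\tfrac{1}{\sigma}\langle \tfrac{\bD}{dt} w^{-}_t,\, w^{-}_t\rangle_{\tsp_{\mu_t}},\qquad \partial_{tt}\bwh(t) = \tfrac{1}{\sigma}\langle \tfrac{\bD}{dt} w^{+}_t,\, w^{+}_t\rangle_{\tsp_{\mu_t}}. \]

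The core computation is then to express $\tfrac{\bD}{dt}w^{\pm}_t$ in terms of $\hess_{\mu_t}\ent$. For the first term, Theorem \ref{thm:2ndorderODE} applies under Assumption \ref{ref:mainass}, so $\tfrac{\bD}{dt}v_t = \tfrac{\sigma^2}{8}\otgrad\fish(\mu_t)$. For the second, Definition \ref{def:hessent} gives $\tfrac{\bD}{dt}\otgrad\ent(\mu_t) = \hess_{\mu_t}\ent(v_t)$. Now I use the identity
\[ \otgrad\fish(\mu_t) \;=\; 2\,\hess_{\mu_t}\ent\bigl(\otgrad\ent(\mu_t)\bigr), \]
which follows from $\fish(\mu) = |\otgrad\ent(\mu)|^2_{\tsp_\mu}$ (equation \eqref{eq:fishent}) together with the self-adjointness of $\hess\ent$; this is precisely the formal manipulation already sketched in the introduction between \eqref{eq:gflowent} and \eqref{eq:fishgrad2}. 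Putting the two covariant derivatives together yields the clean cancellation
\[ \tfrac{\bD}{dt}w^{\pm}_t \;=\; \tfrac{\sigma^2}{4}\hess_{\mu_t}\ent(\otgrad\ent(\mu_t)) \pm \tfrac{\sigma}{2}\hess_{\mu_t}\ent(v_t) \;=\; \pm \tfrac{\sigma}{2}\,\hess_{\mu_t}\ent(w^{\pm}_t), \]
by linearity of $\hess_{\mu_t}\ent$. Substituting into the expressions for $\partial_{tt}\fwh$ and $\partial_{tt}\bwh$ gives exactly the two claimed formulas.

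\textbf{Main obstacle.} The routine part is the algebraic cancellation above; the delicate point is justifying the use of Definition \ref{def:hessent} for the vector field $t\mapsto \otgrad\ent(\mu_t)$ along the Schrödinger-bridge marginal flow, i.e.\ showing that this vector field is absolutely continuous along $(\mu_t)$ and that its covariant derivative really is $\hess_{\mu_t}\ent(v_t)$. Under the hypotheses of Theorem \ref{thm:entropybound} we have Assumption \ref{ref:mainass}(B) (or (A), in the compact case one works identically), so by Lemma \ref{lm:compactcase} and Lemma \ref{lm:regcurv} the curve $(\mu_t)$ is regular, $\mu_t \in \cinfp$ on every $D_\veps$, and $v_t$ enjoys the bounds needed to invoke Lemma \ref{covident} applied to $\xi_t := \otgrad\ent(\mu_t) = \nabla\log\mu_t + 2\nabla U$. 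Bounding $\partial_t\xi_t + \levciv_{v_t}\xi_t$ uniformly on $[\veps,1-\veps]$ in $\tsp_{\mu_t}$ reduces, via \eqref{eq:fgdect} and the HJB equations \eqref{eq:HJB}, to uniform control on $\nabla^{\le 3}\log\tf_t$ and $\nabla^{\le 3}\log\tg_t$ and Gaussian integrability of $\inv$, both of which are supplied by Lemmas \ref{lm:logest} and \ref{lm:compactcase}. Once this regularity is in hand, the identification $\tfrac{\bD}{dt}\otgrad\ent(\mu_t) = \hess_{\mu_t}\ent(v_t)$ follows by approximating $(\mu_t)$ locally in time by the curves used in Definition \ref{def:hessent} and using the well-definedness stated there. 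The remaining integration-by-parts steps needed to identify $\otgrad\fish$ with $2\hess\ent(\otgrad\ent)$ at the level of the regularized measures $\mu_t$ are again covered by Lemma \ref{lm:compactcase}, after which letting the localization disappear yields the claim on all of $(0,1)$.
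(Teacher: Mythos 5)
Your proof is correct and follows essentially the same route as the paper: you invoke Lemma \ref{lm:firstder} to write $\partial_t h_f, \partial_t h_b$ as squared norms of $w^\pm_t = v_t \pm \frac{\sigma}{2}\otgrad\ent(\mu_t)$, differentiate via compatibility of the metric \eqref{eq:metcomp}, and substitute $\frac{\bD}{dt}v_t = \frac{\sigma^2}{8}\otgrad\fish(\mu_t)$ from Theorem \ref{thm:2ndorderODE}, $\frac{\bD}{dt}\otgrad\ent(\mu_t) = \hess_{\mu_t}\ent(v_t)$ from Definition \ref{def:hessent}, and $\otgrad\fish = 2\hess\ent(\otgrad\ent)$, which is exactly the chain of equalities in the paper's proof. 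Your preliminary reorganization into the compact identity $\frac{\bD}{dt}w^{\pm}_t = \pm\frac{\sigma}{2}\hess_{\mu_t}\ent(w^{\pm}_t)$ is a minor cosmetic variant of the paper's inline substitutions, and your justification of the regularity (Lemma \ref{lm:regcurv}, Lemma \ref{lm:compactcase}, Lemma \ref{covident}) also matches.
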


\begin{proof}
 Lemma \ref{lm:regcurv} ensures $(\mu_t)$ is regular. On the other hand, Lemma \ref{lm:compactcase} combined with \eqref{eq:fgdect} and \eqref{eq:entgrad} show that $\otgrad \ent(\mu_t) \in \cC^b_{\infty}$ on any $D_{\veps}$. Thus, we can use Lemma \ref{covident} to conclude that $\frac{\bD}{dt} \otgrad \ent(\mu_t)$ is well defined. 
Using Definition \ref{def:hessent}, we also obtain that
\be\label{eq:entropybound1} \frac{\bD}{dt} \otgrad \ent(\mu_t)\Big|_{t=s}= \hess_{\mu_s} \ent( v_s ) ,\ee
whereas from the compatibility with the metric we get
\be\label{eq:entropybound2} \otgrad \fish (\mu_t) \stackrel{\eqref{eq:fishent}}{=} \otgrad |\otgrad \ent |_{\tsp_{\mu_t}}^2 = 2\, \hess_{\mu_t} \ent ( \otgrad \ent ), \ee
where we wrote $\otgrad \ent $ in place of $\otgrad \ent(\mu_t)$ to simplify notation. We shall retain this convention in the next calculations. We thus have, using the compatibility with the metric
\beas
\partial_{tt}{\fwh}(t) &\stackrel{\text{Lemma}\, \ref{lm:firstder}}{=}& -\frac{1}{2\sigma} \partial_t  | v_t -\frac{\sigma}{2} \otgrad \ent |^2_{\tsp_{\mu_t}}  \\
&\stackrel{\eqref{eq:metcomp}}{=}& -\frac{1}{\sigma}\Big\langle \otcovcur \big( v_t -\frac{\sigma}{2} \otgrad \ent \big), v_t -\frac{\sigma}{2} \otgrad \ent \Big\rangle_{\tsp_{\mu_t}}
\\
&\stackrel{\text{Th.} \,\ref{thm:2ndorderODE} }{=}& -\Big\langle  \frac{\sigma}{8} \otgrad \fish(\mu_t) -\frac{1}{2} \otcovcur\otgrad \ent , v_t -\frac{\sigma}{2} \otgrad \ent \Big\rangle_{\tsp_{\mu_t}}\\
&\stackrel{\eqref{eq:entropybound1}+\eqref{eq:entropybound2}}{=}& -\Big\langle \hess_{\mu_t} \ent \big(\frac{\sigma}{4} \otgrad \ent -\frac{1}{2} v_t\big), v_t - \frac{\sigma}{2} \otgrad \ent \Big\rangle_{\tsp_{\mu_t}} \\
&=& \frac{1}{2} \Big\langle \hess_{\mu_t} \ent \big( v_t -\frac{\sigma}{2} \otgrad \ent), v_t - \frac{\sigma}{2} \otgrad \ent  \Big\rangle_{\tsp_{\mu_t}} .
\eeas	
The other identity is proven analogously.
\end{proof}
Let us now prove Theorem \ref{thm:entropybound}.
\begin{proof}
We first assume that $f$ and $g$ are continuous. The Bakry \'Emery condition \eqref{eq:Bakem} grants $\lambda$-convexity of the entropy. Therefore we get
\bes  \forall t\in(0,1), \quad \partial_{tt}\fwh(t) \stackrel{\eqref{eq:entconv}}{\geq} \frac{ \lambda}{2}| v_t -\frac{\sigma}{2} \nabla \ent |^2_{\tsp_{\mu_t}}  \stackrel{\text{Lemma} \, \ref{lm:firstder}}{=} - \lambda\sigma \partial_t \fwh (t), \ees
 where the use of \eqref{eq:entconv} is justified by the fact that $\mu_t \in \cC^+_{\infty}$ and $\otgrad \ent(\mu_t) \in \cC^{b}_{\infty}$. In the same way,
\bes
\forall t \in (0,1), \quad \partial_{tt} \bwh(t) \geq  \lambda \sigma \partial_t \bwh(t).
\ees
	Since $f,g$ are continuous, it is easy to see that both $h_f$ and $h_b$ are continuous over the whole $[0,1]$. Moreover, Lemma \ref{lm:firstder} and Lemma\ref{lm:secondder} make sure that they are $\cC^2$ over $(0,1)$. Thus, we can apply  Lemma \ref{lm:diffineq} (see the Appendix) to obtain
\[ \fwh(t) \leq \fwh(0) + (\fwh(1) - \fwh(0) ) \frac{1-\exp(-\lambda\sigma t)}{1-\exp(-\lambda\sigma)} \]
and 
\[ \bwh(t) \leq \bwh(1) + (\bwh(0) - \bwh(1) ) \frac{1-\exp(-\lambda \sigma(1-t) )}{1-\exp(-\lambda\sigma)} \]
Summing the two inequalities we obtain
\beas (1-\exp(-\lambda \sigma))\ent(\mu_t) &\leq&  h_{f}(0)\big(\exp(-\lambda\sigma t) - \exp(-\lambda\sigma) \big) \\
&+& h_{b}(0)\big(1-\exp(-\lambda\sigma(1-t))\big)\\
&+& h_{f}(1)\big(1-\exp(-\lambda\sigma t)\big) \\
&+&h_b(1)\big(\exp(-\lambda\sigma(1-t))-\exp(-\lambda\sigma)\big)\\
&=& (h_f(0)+h_{b}(0))\big(1-\exp(-\lambda\sigma(1-t))\big)\\& +& (h_f(1)+h_{b}(1))\big(1-\exp(-\lambda\sigma t)\big)\\
&-& (h_f(0)+h_{b}(1)) \big( \exp(-\lambda\sigma t) -1\big)\big( \exp(-\lambda\sigma (1-t)) -1\big).
\eeas
Dividing by $(1-\exp(-\lambda \sigma ))$ we arrive, after some simple calculations at
\bea\label{eq:entbound1}
 \ent(\mu_t) &\leq & (h_f(0)+h_{b}(0))\frac{1-\exp(-\lambda\sigma(1-t))}{1-\exp(-\lambda\sigma)}\\
 \nonumber & +& (h_f(1)+h_{b}(1))\frac{1-\exp(-\lambda\sigma t)}{1-\exp(-\lambda\sigma)}\\
\nonumber &-& (h_f(0)+h_{b}(1)) \frac{\cosh(\frac{\lambda\sigma}{2}) - \cosh(\lambda\sigma(t-\frac{1}{2}))}{\sinh(\frac{\lambda\sigma}{2})}
\eea
Observe that, by definition 
\be\label{eq:entbound2} \fwh(0) + \bwh(0) = \ent(\mu_0 )= \ent(\mu), \quad \fwh(1) + \bwh(1) = \ent(\mu_1)= \ent(\nu). \ee
From the definition of $\tf_0,\tg_0$, we obtain, using the standard properties of conditional expectation:
\beas \fwh(0)&\stackrel{\eqref{eq:hfhb}}{=}&\int_M  \tf_0 \tg_0 (\log \tf_0+U)  d \vol \\ 
&=& \int_M  f_0 g_0 \log f_0  d \inv\\
&=& E_{\bP}\Big(f(X_0)E_{\bP}[g(X_1)|X_0] \log f(X_0)\Big)\\
&=& E_{\bP}\Big(f(X_0)g(X_1)\log f(X_0)\Big).
\eeas
Using the same argument we can show that
\[ h_{b}(1) = E_{\bP}\Big(f(X_0)g(X_1)
\log g(X_1)\Big), \]
meaning that 
\be\label{eq:entbound3} h_f(0)+h_{b}(1) =  E_{\bP}\Big(f(X_0)g(X_1)
\log [ f(X_0) g(X_1) ] \, \Big) = E_{\hat{\bQ}}\Big( \log \frac{d \hat{\bQ}}{d\bP} \Big)= \entcost(\mu,\nu). \ee 
Plugging \eqref{eq:entbound2} and
\eqref{eq:entbound3} into \eqref{eq:entbound1} yields the conclusion.The general case when $f,g$ are not continuous is obtained with a standard approximation argument.
\end{proof}
\subsubsection{Second proof of (i) $\Rightarrow (ii)$}
In this proof we assume for simplicity $\sigma=1$. There is no difficulty in extending it to the general case. We recall the definitions of the $\Gamma$ and $\Gamma_2$ operators on $\cinf \times \cinf$
\be\label{eq:g1}
\Gamma(f,g) := \frac{1}{2}[ \scrL(fg) - f (\scrL g) - g (\scrL f) ],
\ee
\be\label{eq:g2}
\Gamma_2(f,g) := \frac{1}{2} [ \scrL \Gamma(f,g) - \Gamma(\scrL f,g) - \Gamma(f,\scrL g) ].
\ee
Since $M$ is compact, the integration by parts formula for the invariant measure 
$\inv $ tells that for any $f,g \in \cinf(=\cinf^b=\cinf^c)$.  
\be\label{eq:IBPF} \int_M f \scrL g d\inv = \int_M g \scrL f d\inv = - \int_M \Gamma(f,g) d\inv  , \ee
 see the monograph \cite{bakry2013analysis} for details.
The next two lemmas are the analogous to the Lemma \ref{lm:firstder} and \ref{lm:secondder}. In their proofs, we will often exchange integrals and time derivatives, and use the integration by parts formula \eqref{eq:IBPF}. All these operations are justified by Lemma \ref{lm:compactcase}, where it is proven that $f_t,g_t,\log f_t,\log g_t$ are of class $\cC^{b}_{\infty}$ over $D_{\veps}$, for any $\veps \in (0,1)$.
\begin{lemma}\label{lm:gamma1}
Let $f,g$ be given by \eqref{eq:fgdec} and $f_t,g_t$ be given by \eqref{def:fg}. 
Assume that $f$ and $g$ are of class $\cinfcp$. 
Then for any $t \in [0,1]$, we can write $\ent(\mu_t) = \fwh(t)+\bwh(t)$, where
\be\label{eq:gamma13}
\fwh(t) = \int_M \log f_t \, f_t g_t d \inv, \quad \bwh(t) = \int_M \log g_t \, f_t g_t d \inv
\ee
 Moreover, for all $0<t<1$:
\be\label{eq:gamma11}
\partial_t\fwh(t) = - \int_M  \Gamma(\log f_t, \log f_t) f_t g_t d\inv , \quad \partial_t\bwh(t) =  \int_M  \Gamma(\log g_t, \log g_t)f_t g_t d \inv
\ee
\end{lemma}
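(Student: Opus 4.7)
The plan is to handle the decomposition \eqref{eq:gamma13} first (it is essentially automatic from the $fg$-representation), and then to derive \eqref{eq:gamma11} by differentiating $h_f$ under the integral sign, using the standard PDEs satisfied by $f_t$ and $g_t$, and concluding with one integration by parts via \eqref{eq:IBPF}.

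For the decomposition, recall that $d\hat{\bQ} = f(X_0) g(X_1) d\bP$ by hypothesis. Since $\bP$ is a stationary Markov measure, $X_t$ has law $\inv$ under $\bP$ and conditioning on $X_t$ factorises past and future, so
\[ d\mu_t(x) = E_{\bP}[f(X_0) g(X_1) \mid X_t = x] \, d\inv(x) = f_t(x) g_t(x) \, d\inv(x). \]
Because $d\inv = e^{-2U} d\vol$, the density of $\mu_t$ with respect to $\vol$ is $f_t g_t e^{-2U}$, hence $\log \mu_t + 2U = \log f_t + \log g_t$. Plugging into \eqref{eq:ent} produces $\ent(\mu_t) = \int (\log f_t + \log g_t) f_t g_t \, d\inv$, which is precisely \eqref{eq:gamma13}.

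For the derivative of $h_f$, I would use the reversibility of $\bP$ with respect to $\inv$ to identify $f_t = P_t f$ and the Markov property to identify $g_t = P_{1-t} g$, where $(P_s)$ is the semigroup generated by $\scrL$; in particular $\partial_t f_t = \scrL f_t$ and $\partial_t g_t = -\scrL g_t$ on $(0,1) \times M$. Lemma \ref{lm:compactcase} ensures that $f_t, g_t, \log f_t$ lie in $\cinfcp$ uniformly on each $D_\veps$, which justifies differentiating under the integral and yields
\[ \partial_t h_f(t) = \int (1 + \log f_t) \, g_t \, \scrL f_t \, d\inv \; - \; \int \log f_t \cdot f_t \, \scrL g_t \, d\inv. \]

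Applying \eqref{eq:IBPF} term by term and expanding with the Leibniz rule for $\Gamma$, one uses the diffusion chain rule $\Gamma(\log f_t, f_t) = \Gamma(f_t, f_t)/f_t = f_t \, \Gamma(\log f_t, \log f_t)$ together with the symmetry $\Gamma(f_t, g_t) = \Gamma(g_t, f_t)$ to check that every term involving $\Gamma(f_t, g_t)$ cancels, leaving exactly $-\int \Gamma(\log f_t, \log f_t) f_t g_t \, d\inv$. The identity for $\partial_t h_b$ follows from the same computation with the roles of $f_t$ and $g_t$ swapped, the opposite sign reflecting $\partial_t g_t = -\scrL g_t$ (instead of $+\scrL g_t$). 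The only real obstacle is bookkeeping: tracking the three terms produced by each application of \eqref{eq:IBPF} and verifying the cancellation of the mixed $\Gamma(f_t, g_t)$ contributions. The analytic regularity needed to differentiate under the integral and to apply the integration-by-parts formula on all of $M$ is supplied by Lemma \ref{lm:compactcase}.
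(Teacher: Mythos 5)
Your proof is correct and follows essentially the same route as the paper: identify $d\mu_t = f_t g_t\,d\inv$ so that $\log\mu_t + 2U = \log f_t + \log g_t$, split the entropy into $h_f + h_b$, differentiate under the integral using the forward/backward heat equations for $f_t,g_t$, and reduce to the stated form via \eqref{eq:IBPF} and the $\Gamma$-derivation/chain rule. The only cosmetic difference is that you apply the chain rule to get $\partial_t\log f_t = \scrL f_t/f_t$ directly, whereas the paper records the expanded form $\partial_t\log f_t = \scrL\log f_t + \Gamma(\log f_t,\log f_t)$ before differentiating, and the paper organizes the cancellation through a single integration by parts and the identity $\Gamma(\log f_t,f_t) = f_t\Gamma(\log f_t,\log f_t)$; both resolutions of the bookkeeping give the same result.
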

\begin{proof}
From \eqref{eq:fgdect} we get
\[ \ent(\mu_t ) = \int_M f_tg_t \log f_tg_t d \inv \]
 which yields \eqref{eq:gamma13}. Recall that $f_t,g_t$ and their logarithms are classical solutions over $[0,1] \times M$ of
\bea\label{eq:gammaheat} \partial_t f_t &=& \scrL f_t , \quad \partial_t \log f_t = \scrL \log f_t + \Gamma(\log f_t, \log f_t), \\ 
 \nonumber \partial_t g_t &=& -\scrL g_t, \quad  \partial_t \log g_t = -\scrL \log g_t - \Gamma(\log g_t, \log g_t),\eea
where we used the fact that $\Gamma(f,f) = \frac{1}{2}|\nabla f|^2$
Hence
\[
\partial_t \fwh(t) \stackrel{\eqref{eq:gammaheat}}{=} \int_M  (\scrL f_t )g_t \log f_t - (\scrL g_t) f_t \log f_t + \scrL(\log f_t ) f_t g_t + \Gamma(\log f_t,\log f_t) f_t g_t d\inv
\]
Using integration by parts , we get 
\beas &{}&\int_M  (\scrL f_t )g_t \log f_t - (\scrL g_t)  f_t\log f_t  + \scrL(\log f_t ) f_t g_t + \Gamma(\log f_t, \log f_t)f_t g_t d\inv \\
&=& \int_M  (\scrL f_t )g_t \log f_t -  g_t \scrL( f_t\log f_t)  + \scrL(\log f_t ) f_t g_t + \Gamma(\log f_t, \log f_t)f_t g_t d\inv \\
&=&  \int_M \Gamma(\log f_t, \log f_t)f_t g_t -2\Gamma(\log f_t, f_t) g_t d \inv
\\
&=& - \int_M \Gamma(\log f_t, \log f_t)f_t g_t d \inv.
\eeas
The other identity is derived analogously.
\end{proof}
\begin{lemma}\label{lm:gamma2}
In the same hypothesis of Lemma \ref{lm:gamma1} we have for all $0<t<1$
 \bea\label{eq:gamma21}
\partial_{tt} \fwh(t) &=& 2 \int_M  \Gamma_2(\log f_t, \log f_t)f_t g_td\inv, \\ \partial_{tt} \nonumber \bwh(t) &=& 2 \int_M  \Gamma_2(\log g_t, \log g_t)f_t g_t d\inv.
\eea
\end{lemma}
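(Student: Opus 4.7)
\textbf{Proof proposal for Lemma \ref{lm:gamma2}.} The plan is to differentiate once more the identities of Lemma \ref{lm:gamma1} and manipulate the resulting expression using the $\Gamma$-calculus identities recalled in \eqref{eq:g1}, \eqref{eq:g2}, together with the integration by parts formula \eqref{eq:IBPF} valid on compact $M$. Throughout, set $h_t := \log f_t$; by Lemma \ref{lm:compactcase} the functions $f_t, g_t, h_t$ are of class $\cC^b_{\infty}$ on every $D_{\veps}$, so all the exchanges between $\partial_t$ and $\int_M$ and all the integrations by parts below are legitimate.

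Starting from $\partial_t \fwh(t)=-\int_M \Gamma(h_t,h_t) f_t g_t\,d\inv$ given in \eqref{eq:gamma11}, I would apply $\partial_t$ under the integral sign and invoke the two evolution equations in \eqref{eq:gammaheat}, namely $\partial_t h_t = \scrL h_t + \Gamma(h_t,h_t)$ and $\partial_t(f_tg_t)=g_t\scrL f_t - f_t \scrL g_t$. This yields
\bes
\partial_{tt}\fwh(t) \;=\; -\int_M\!\Big[\,2\Gamma(h_t,\scrL h_t)+2\Gamma(h_t,\Gamma(h_t,h_t))\Big]f_tg_t\,d\inv \;-\; \int_M \Gamma(h_t,h_t)\,(g_t\scrL f_t - f_t\scrL g_t)\,d\inv .
\ees
By definition $2\Gamma_2(h_t,h_t)=\scrL\Gamma(h_t,h_t)-2\Gamma(h_t,\scrL h_t)$, so to close the argument it suffices to prove that the ``extra'' quantity
\bes
R(t) \;:=\; 2\!\int_M \Gamma(h_t,\Gamma(h_t,h_t))f_tg_t\,d\inv + \int_M \Gamma(h_t,h_t)(g_t\scrL f_t - f_t\scrL g_t)\,d\inv + \int_M \scrL\Gamma(h_t,h_t)\,f_tg_t\,d\inv
\ees
vanishes identically.

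To handle $R(t)$, I would use three elementary facts that all follow from $f_t=e^{h_t}$ and the defining properties of $\Gamma$ and $\scrL$:
\begin{enumerate}[(a)]
\item $\scrL f_t = f_t\bigl(\scrL h_t + \Gamma(h_t,h_t)\bigr)$;
\item $\Gamma(f_t,g_t)=f_t\,\Gamma(h_t,g_t)$ and $\Gamma(h_t,f_tg_t)=g_tf_t\Gamma(h_t,h_t)+f_t\Gamma(h_t,g_t)$ (derivation rule);
\item the self-adjointness of $\scrL$ in $L^2(\inv)$, i.e.\ \eqref{eq:IBPF}, giving $\int_M \scrL\Gamma(h_t,h_t) f_tg_t\,d\inv=\int_M \Gamma(h_t,h_t)\scrL(f_tg_t)\,d\inv$ together with the product identity $\scrL(f_tg_t)=f_t\scrL g_t + g_t\scrL f_t + 2\Gamma(f_t,g_t)$.
\end{enumerate}
Using (c) first collapses the second and third terms of $R(t)$ into $-2\int_M \Gamma(h_t,h_t)(g_t\scrL f_t +\Gamma(f_t,g_t))\,d\inv$. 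Expanding $\scrL f_t$ via (a) and $\Gamma(f_t,g_t)$ via (b) turns this into a sum of terms in $\Gamma(h_t,h_t)\scrL h_t$, $\Gamma(h_t,h_t)^2$ and $\Gamma(h_t,h_t)\Gamma(h_t,g_t)$. Finally an integration by parts via \eqref{eq:IBPF} applied to $\int_M \Gamma(h_t,h_t)\scrL h_t\, f_tg_t\,d\inv = -\int_M \Gamma(h_t,\Gamma(h_t,h_t)f_tg_t)\,d\inv$, combined once more with (b), cancels exactly the remaining $\Gamma(h_t,\Gamma(h_t,h_t))$, $\Gamma(h_t,h_t)^2$ and $\Gamma(h_t,h_t)\Gamma(h_t,g_t)$ contributions, giving $R(t)=0$.

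For the second identity, I would run an entirely parallel computation starting from $\partial_t \bwh(t)=+\int_M \Gamma(\log g_t,\log g_t) f_tg_t\,d\inv$ and using $\partial_t \log g_t = -\scrL\log g_t - \Gamma(\log g_t,\log g_t)$: the overall signs swap but the structure is identical, so the same cancellation produces $\partial_{tt}\bwh(t)=2\int_M \Gamma_2(\log g_t,\log g_t)f_tg_t\,d\inv$. The main obstacle is not conceptual but bookkeeping: one must group the five or six terms produced by the differentiation with exactly the right signs and weights so that the derivation rule for $\Gamma$ and the single integration by parts eliminate everything outside $\Gamma_2$.
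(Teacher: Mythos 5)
Your strategy is in substance the same as the paper's: differentiate once more using \eqref{eq:gammaheat}, recognize the combination $\scrL\Gamma(\log f_t,\log f_t)-2\Gamma(\scrL\log f_t,\log f_t)=2\Gamma_2(\log f_t,\log f_t)$, and eliminate the remaining terms via the derivation/chain-rule identities for $\Gamma$ and the self-adjointness \eqref{eq:IBPF}. You organize the bookkeeping as ``isolate a remainder $R(t)$ and show $R(t)=0$'', whereas the paper attacks the single term $2\int\Gamma(\Gamma(\log f_t,\log f_t),\log f_t)f_tg_t\,d\inv$ directly by rewriting $\Gamma(\cdot,\log f_t)\,f_t=\Gamma(\cdot,f_t)$, expanding it via the definition of $\Gamma$, and integrating by parts once; the two routes use the same tools and are logically equivalent.

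There is, however, a sign error you should fix. Applying (c) to the third term of $R(t)$ gives $\int_M\Gamma(h_t,h_t)\scrL(f_tg_t)\,d\inv = \int_M\Gamma(h_t,h_t)\bigl[f_t\scrL g_t+g_t\scrL f_t+2\Gamma(f_t,g_t)\bigr]\,d\inv$, and adding the second term $\int_M\Gamma(h_t,h_t)(g_t\scrL f_t-f_t\scrL g_t)\,d\inv$ the $f_t\scrL g_t$ contributions cancel and the $g_t\scrL f_t$ contributions reinforce, yielding
$$
+\,2\int_M\Gamma(h_t,h_t)\bigl(g_t\scrL f_t+\Gamma(f_t,g_t)\bigr)\,d\inv,
$$
with a \emph{plus} sign rather than the minus sign you wrote. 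This matters: with the minus sign the final integration by parts on $\int\Gamma(h_t,h_t)\scrL h_t\,f_tg_t\,d\inv=-\int\Gamma(h_t,\Gamma(h_t,h_t)f_tg_t)\,d\inv$ does not cancel the term $2\int\Gamma(h_t,\Gamma(h_t,h_t))f_tg_t\,d\inv$ but instead doubles it, leaving $R(t)=4\int\Gamma(h_t,\Gamma(h_t,h_t))f_tg_t\,d\inv\neq 0$. With the correct $+2$, expanding $\scrL f_t=f_t(\scrL h_t+\Gamma(h_t,h_t))$ and $\Gamma(f_t,g_t)=f_t\Gamma(h_t,g_t)$, then integrating by parts and using the derivation rule on $\Gamma(h_t,\Gamma(h_t,h_t)f_tg_t)$, all terms cancel and $R(t)=0$ as you intend. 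The parallel computation for $\bwh$ then follows as you describe.
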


\begin{proof}
Using Lemma \ref{lm:gamma1} we get
\bea \label{eq:gamma22} \nonumber \partial_{tt} \fwh(t) &\stackrel{\eqref{eq:gammaheat}}{=}& - \int_M (\scrL f_t) \Gamma(\log f_t,\log f_t) g_t -(\scrL g_t) \Gamma(\log f_t,\log f_t) f_t\\
&+&  2 \Gamma(\scrL \log f_t, \log f_t)f_tg_t + 2\Gamma( \Gamma(\log f_t,\log f_t),\log f_t)f_tg_t d\inv
\eea
We can now use \eqref{eq:IBPF} and the fact that $\Gamma$ is a derivation to rewrite the last term in the above expression
\beas
&{}&2\int_M  \Gamma(\Gamma(\log f_t,\log f_t),\log f_t) f_t g_t d\inv \\&=&  2\int_M   \Gamma(\Gamma(\log f_t,\log f_t), f_t)g_t d \inv \\
&\stackrel{\eqref{eq:g1}}{=}&
\int_M  \scrL( f_t  \,\Gamma (\log f_t,\log f_t) ) g_t \\ &-&  \scrL \Gamma(\log f_,\log f_t)f_t g_t -   (\scrL f_t) \Gamma(\log f_t,\log f_t) g_td \inv  \\
&\stackrel{\eqref{eq:IBPF}}{=}& \int_M (\scrL g_t)   \Gamma (\log f_t,\log f_t) f_t -  \scrL \Gamma(\log f_t,\log f_t)\, f_t g_t\\& -&   (\scrL f_t) \Gamma(\log f_t,\log f_t)g_t d\inv.
\eeas
Plugging this expression in \eqref{eq:gamma22}, and using the definition of $\Gamma_2$, we arrive at
\beas 
\partial_{tt} \fwh(t) =  \int_M  \Big( \scrL \Gamma(\log f_t,\log f_t)  -  
2  \Gamma( \scrL \log f_t, \log f_t ) \Big) g_tf_t  d \inv \\ = 2 \int_M   \Gamma_2(\log f_t, \log f_t)f_t g_t d \inv,
\eeas
which is the desired conclusion. The other identity is proven analogously.
\end{proof}
Let us now complete the proof of Theorem \ref{thm:entropybound}.
\begin{proof}
Let $\mu,\nu$ be such that $f,g \in \cC^{b,+}_{\infty}$. It is well known that\footnote{The constant $\frac{\lambda}{2}$ instead of $\lambda$ is because the generator $\scrL$ has a $\frac{1}{2}\Delta$ as second order part instead of $\Delta$ } under \eqref{eq:Bakem} 
\[ \forall f \in \cC_{\infty} \quad \Gamma_2(f,f) \geq \frac{\lambda}{2} \Gamma(f,f). \]
In view of Lemmas \ref{lm:gamma1} and \ref{lm:gamma2}, this implies that 
\[ \partial_{tt}\fwh(t)\geq - \lambda \partial_t \fwh(t), \quad \partial_{tt}\bwh(t) \geq \lambda \partial_t \bwh(t). \] 
From this point on, the proof goes as the previous case, and we do not repeat it.  The case when $f,g$ are not both in $\cC^{b,+}_{\infty}$ follows with a standard approximation argument.
\end{proof}
\subsubsection{Proof of (ii) $\Rightarrow$ (i)}
\begin{proof}
Let us choose $\mu$ and $\nu$ such that $\mu,\nu \ll \inv$ and with bounded density, and let $(\mu^{\veps}_t)$ be the marginal flow of $SB(\mu,\nu,\veps \scrL)$. Then, equation \eqref{eq:entropybound} tells that 
\beas \nonumber \forall t \in [0,1] \quad \ent(\mu^{\veps}_t)  \leq \frac{1-\exp(- \veps\lambda(1-t))}{ 1 -\exp(-  \veps\lambda )}\ent(\mu)+ \frac{1-\exp(- \veps \lambda t)}{ 1 -\exp(- \veps \lambda )}\ent(\nu) \\
-  \frac{\cosh(\frac{\veps\lambda }{2})- \cosh(-\veps \lambda(t-\frac{1}{2}))}{\sinh(\frac{\veps\lambda }{2})} \cT^{\veps}_{\ent}(\mu,\nu).\eeas
Next, as it easy to check, our hypothesis allows us to apply the results of \cite[sec 6]{gigli2017second}. They tell that,
\begin{itemize}
\item For $t \in [0,1]$, \[\lim_{\varepsilon \rightarrow 0}\mu^{\varepsilon}_t \rightarrow \mu^0_t,\] where the limit is intented in the weak sense and $(\mu^0_t)$ is the unique constant speed geodesic between $\mu$ and $\nu$.
\item \[ \lim_{\varepsilon \rightarrow 0} \veps \, \cT^{\veps}_{\ent}(\mu,\nu) = \frac{1}{2} W^2_{2} (\mu,\nu).\]
\end{itemize}
In view of this, taking the $\liminf$ at both sides in the equation above and using the lower semicontinuity of $\ent$ yields
\be\label{eq:entconva} \ent(\mu^0_t) \leq (1-t) \ent(\mu) + t\ent(\nu) - \frac{\lambda}{2} t(1-t) W^2_{2} (\mu,\nu)  \ee
Therefore we obtained that the entropy is $\lambda$ convex along displacement interpolations, provided the initial and final measure are absolutely continuous with bounded density. It is well known that $\lambda$-convexity along \emph{all} geodesics implies the condition \eqref{eq:Bakem}. However, the proof of this fact given in \cite{von2005transport} uses only geodesics between uniform measures on balls, which are clearly among those for which we can prove \eqref{eq:entconva}. The conclusion follows.
\end{proof}
\subsubsection{Proof of Corollary \ref{cor:enttrans2}}
\begin{proof}
If $\ent(\mu)= + \infty$, there is nothing to prove since, $\entcost(\mu,\nu) = + \infty$ as well. Let $\ent(\mu)<+\infty$ and $\nu=\inv$. Then we can apply Theorem \ref{thm:entropybound}. After dividing by $(1-t)$ the bound \eqref{eq:entropybound} becomes, observing that entropy is non negative and $\cosh$ symmetric around $t=0$:
\bes	0 \leq \frac{1-\exp(- \lambda(1-t))}{1-t}\frac{1}{ 1 -\exp(-  \lambda)}\ent(\mu) 
-  \frac{\cosh(\frac{\lambda}{2})- \cosh(\frac{\lambda}{2}-\lambda(1-t))}{(1-t)\sinh(\frac{\lambda}{2})} \entcost(\mu,\nu).\ees
Letting $t \rightarrow 1$ the conclusion follows.
\end{proof}

\subsection{Proof of Theorem \ref{thm:fishdecay}}
We prove two preparatory Lemmas, and then the Theorem. The first Lemma is a rigorous proof of equation \eqref{eq:rec2}
\begin{lemma}\label{lm:covdevfish}
The vector field  $\otgrad \fish (\mu_t)$ is absolutely continuous along $(\mu_t)$. Moreover
\bea\label{eq:covdevfish1} 
 	\langle \frac{\bD}{dt}\otgrad\fishu(\mu_t),v_t\rangle_{\tsp_{\mu_t}} &=& 
2 \int_{\RD} | D v_t \cdot \nabla \log \mu_t  +  \nabla \mathbf{div}(v_t) |^2 d\mu_t  \\ \nonumber &-& 4 \int_{\RD}  \sum_{k,j=1}^d (Dv_t\cdot Dv_t)_{kj} \partial_{kj} \log \mu_t d\mu_t \\
\nonumber &+& 8\int_{\RD}  \langle v_t , \mathbf{Hess} \scrU \cdot v_t \rangle d\mu_t.
\eea
\end{lemma}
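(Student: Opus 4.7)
The proof splits naturally into two stages: establishing absolute continuity of $(\otgrad \fishu(\mu_t))$ along $(\mu_t)$, and then evaluating the inner product by an explicit manipulation in $\RD$.

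For the first stage, note that by \eqref{eq:fishgrad2} the vector field $\otgrad \fishu(\mu_t)$ is the gradient of the scalar function
\[ G_t := -|\nabla \log \mu_t|^2 - 2 \Delta \log \mu_t + 8 \scrU, \]
hence lies in $\tsp_{\mu_t}$. In the flat setting the covariant derivative $\levciv_{v_t}\xi_t$ coincides with $D\xi_t \cdot v_t$, so condition \eqref{eq:covident1} of Lemma \ref{covident} applied to $\xi_t = \otgrad \fishu(\mu_t)$ is precisely hypothesis (ii) of the Theorem. Lemma \ref{covident} thus yields both the absolute continuity and the identity
\[ \frac{\bd}{dt}\otgrad\fishu(\mu_t) = \partial_t \otgrad\fishu(\mu_t) + D\otgrad\fishu(\mu_t)\cdot v_t. \]
Since $v_t$ is tangent and, by Definition \ref{def:covder}, $\frac{\bD}{dt}$ is the $L^2_{\mu_t}$-projection of $\frac{\bd}{dt}$ onto the closure of gradients, one may replace $\frac{\bD}{dt}$ by $\frac{\bd}{dt}$ inside the pairing with $v_t$, obtaining
\[ \langle \tfrac{\bD}{dt}\otgrad\fishu(\mu_t),v_t\rangle_{\tsp_{\mu_t}} = \int_{\RD} \big(\partial_t \nabla G_t + D\nabla G_t \cdot v_t\big)\cdot v_t \, d\mu_t. \]

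The second stage is the explicit computation of this integral. The plan is to use the continuity equation $\partial_t \mu_t = - \divg(\mu_t v_t)$ and its consequence
\[ \partial_t \log \mu_t = -\divg v_t - v_t \cdot \nabla \log \mu_t \]
to rewrite $\partial_t G_t$ in terms of spatial derivatives of $v_t$ and $\log \mu_t$, and then integrate by parts against $d\mu_t$. The three pieces of $G_t$ contribute separately. The time-independent piece $8\scrU$ contributes $8 \int \langle v_t, \mathbf{Hess}\,\scrU\cdot v_t\rangle d\mu_t$ directly from the $D\nabla G_t\cdot v_t$ part. Expanding $\partial_t |\nabla \log \mu_t|^2$ and $\partial_t \Delta \log \mu_t$ produces several cross terms, and the key algebraic observation is that the contributions from the $-|\nabla \log \mu_t|^2$ piece and from $\nabla \divg v_t$ coming from the $-2\Delta \log \mu_t$ piece assemble, after one integration by parts, into the perfect square $2\int|Dv_t \cdot \nabla \log \mu_t + \nabla \divg v_t|^2 d\mu_t$. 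The remaining second-order terms in $v_t$, produced by differentiating $\partial_{kj}\log \mu_t$ and by commuting $\partial_t$ through $\nabla$, combine into $-4\int \sum_{k,j}(Dv_t\cdot Dv_t)_{kj} \partial_{kj}\log \mu_t \, d\mu_t$.

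The main obstacle is bookkeeping: several integrations by parts are required and the identification of the square $|Dv_t\cdot \nabla \log \mu_t + \nabla \divg v_t|^2$ emerges only after reorganizing the cross terms produced by the $|\nabla\log\mu_t|^2$ and $\Delta \log \mu_t$ pieces of $G_t$. Hypothesis (iii) of the Theorem is what justifies all of these manipulations: the pointwise bounds on derivatives up to order four of $\log \mu_t$ and $v_t$, weighted against $\mu_t + \max_i |\partial_i \mu_t|$, guarantee that every intermediate integral is absolutely convergent uniformly in $t$ and that all boundary terms at infinity vanish, so differentiation under the integral and integration by parts are fully legitimate.
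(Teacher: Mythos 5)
Your overall strategy matches the paper's: you invoke Lemma \ref{covident} via hypotheses (i) and (ii) to get absolute continuity and the formula $\frac{\bd}{dt}\otgrad\fishu(\mu_t)=\partial_t\otgrad\fishu(\mu_t)+D\otgrad\fishu(\mu_t)\cdot v_t$, replace $\frac{\bD}{dt}$ by $\frac{\bd}{dt}$ in the pairing with the tangent vector $v_t$, split off the $8\scrU$ piece (which indeed contributes $8\int\langle v_t,\mathbf{Hess}\,\scrU\cdot v_t\rangle d\mu_t$ directly since $\partial_t\nabla\scrU=0$), and then reduce to the $U=0$ case, to be handled by inserting the continuity equation in the form $\partial_t\log\mu_t=-\divg v_t - v_t\cdot\nabla\log\mu_t$ and integrating by parts using hypothesis (iii) for justification. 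All of that is correct and is exactly how the paper proceeds.

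The gap is that you do not actually carry out the computation for the two nontrivial pieces $-\nabla|\nabla\log\mu_t|^2$ and $-2\nabla\Delta\log\mu_t$; you assert the outcome. In particular, the claim that the cross terms ``assemble, after one integration by parts, into the perfect square $2\int|Dv_t\cdot\nabla\log\mu_t+\nabla\divg v_t|^2 d\mu_t$'' is not a faithful description of what happens: in the paper's own derivation the contributions are first expanded into nine integrals ($A.1$--$A.3$, $B.1$--$B.6$), and roughly half a dozen separate integrations by parts (on $B.1$ twice, then on $A.3$, $A.2.2$, $A.2.2.1$, and $B.4$) are needed before everything collapses to $-4C.1+2C.2+2C.3+4C.4$, which is then recognized as the right-hand side. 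The emergence of the perfect square and of the isolated $-4\int\sum_{k,j}(Dv_t\cdot Dv_t)_{kj}\partial_{kj}\log\mu_t\,d\mu_t$ term is genuinely delicate (the $(Dv\cdot Dv)$ term, for instance, surfaces in three different intermediate integrals and only nets out to $-4C.1$ at the very end). Without exhibiting that cancellation explicitly, the central identity \eqref{eq:covdevfish1} is merely stated, not proved. To repair the argument you need to write out the expansions of $\partial_t\alpha_t+D\alpha_t\cdot v_t$ and $\partial_t\beta_t+D\beta_t\cdot v_t$ (with $\alpha_t=-\nabla|\nabla\log\mu_t|^2$, $\beta_t=-2\nabla\Delta\log\mu_t$), pair each with $v_t$, and track the integrations by parts term by term, as the paper does.
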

\begin{proof}
Point (i) of the assumption make sure that $(\mu_t)$ is regular. (ii) combined with Lemma \ref{covident} grant the desired absolute continuity and that 
\[ \frac{\bd}{dt} \otgrad \fishu(\mu_t) = \partial_t \otgrad \fishu(\mu_t) + D \otgrad \fishu(\mu_t)_t \cdot v_t.  \]
Since $v_t \in \tsp_{\mu_t}$ we also have, by definition of covariant derivative
\[\langle \frac{\bD}{dt}\otgrad\fishu(\mu_t),v_t\rangle_{\tsp_{\mu_t}} =\langle \frac{\bd}{dt}\otgrad\fishu(\mu_t),v_t\rangle_{\tsp_{\mu_t}}  \]
Let us now prove \eqref{eq:covdevfish1}. First, we do it for the case when $U=0$. In this case, from \eqref{eq:fishgrad}  we have $ \otgrad \fishu(\mu_t)= \alpha_t + \beta_t$ with
\[\alpha_t = -\nabla |\nabla \log \mu_t|^2, \quad \beta_t = - 2 \nabla \Delta \log \mu_t \]
From now on, we drop the dependence on $t$ both in $\mu_t$ and $v_t$ and adopt Einstein's convention for indexes.
Moreover, we abbreviate $\partial_{x_k}$ with $\partial_k$ and we are going to use, without mentioning it, the fact that $v$ is a gradient vector field, i.e. $\partial_k v^j=\partial_jv^k$. Since $\mu_t \in \cC^{+}_{\infty}$, we can use the continuity equation in the form $\partial_t \log \mu =- \divg(v)+ \nabla \log \mu \cdot v$  to get 
\beas (\partial_t \alpha_t + D \alpha_t \cdot v_t)^i &=& 
- 2 \partial_i ( \partial_{k} \log \mu \, \partial_k \partial_t  \log \mu)  - 2\partial_{i}(\partial_{kj} \log \mu \, \partial_k \log \mu) v^j \\
&=& 2  \partial_i \big(\partial_k \log \mu \,\partial_{k}(\partial_j v^j + \partial_j \log \mu \, v^j) \big) - 2\partial_{i}(\partial_{jk} \log \mu \, \partial_k \log \mu) v^j \\
&=& 2\partial_i \big(\partial_k \log \mu\,\partial_{jk} v^j + \partial_k \log \mu \, \partial_j \log \mu \,\partial_k v^j  \big)\\ &+& 2\partial_i\big(\partial_k \log \mu \, \partial_{kj} \log \mu \partial_k v^j\big) - 2\partial_{i}(\partial_{jk} \log \mu \, \partial_k \log \mu) v^j \\
&=& 2\partial_i \big(\partial_k \log \mu\,\partial_{jk} v^j + \partial_k \log \mu \, \partial_j \log \mu \,\partial_k v^j  \big)\\
&+& 2 \partial_{kj} \log \mu \partial_k \log \mu \, \partial_{i} v^j .
\eeas
Therefore
\beas
\langle \partial_t \alpha_t + D\alpha_t,v_t \rangle_{\tsp_{\mu_t}} &=& \underbrace{2\int \partial_i \big(\partial_k \log \mu\,\partial_{jk} v^j) v^i d\mu}_{:=A_1} + \underbrace{2 \int \partial_i(\partial_k \log \mu \, \partial_j \log \mu \,\partial_k v^j  \big) v^i d\mu}_{:=A.2}  \\
&+& \underbrace{2\int \partial_{kj} \log \mu \, \partial_k \log \mu \partial_{i} v^j v^i d \mu}_{:=A.3}.
\eeas
Let us now compute $\partial_t \beta_t + D \beta_t \cdot v_t $.
We have, using the continuity equation
\beas
(\partial_t \beta_t + D \beta_t \cdot v_t )^i &=& 2\partial_{ikk} (\partial_j v^j + v^j \partial_j \log \mu)-2\partial_{ijkk} \log \mu \, v^j \\
&=& 2 \partial_{ijkk} v^j +2 \partial_{i} \big( \partial_j \log \mu \,\partial_{kk}v^j+ 2 \partial_{kj}\log \mu \, \partial_k v^j  \\&+& \partial_{jkk}\log \mu \, v^j \big) -2 \partial_{ijkk} \log \mu \, v^j\\
&=& 2 \partial_{ijkk} v^j+2\partial_j \log \mu \,\partial_{ikk}v^j+2\partial_{ij} \log \mu \,\partial_{kk}v^j\\
&+& 4\partial_{ikj}\log \mu \, \partial_k v^j + 4\partial_{kj}\log \mu \, \partial_{ik} v^j \\
&+&  2\partial_{ijkk}\log \mu \, v^j+ 2\partial_{jkk}\log \mu \, \partial_i v^j -2\partial_{ijkk} \log \mu \, v^j \\
&=& 2 \partial_{ijkk} v^j+2\partial_j \log \mu \,\partial_{ikk}v^j+2\partial_{ij} \log \mu \,\partial_{kk}v^j\\
&+& 4\partial_{ikj}\log \mu \, \partial_k v^j + 4\partial_{kj}\log \mu \, \partial_{ik} v^j +  2\partial_{jkk}\log \mu \, \partial_i v^j  .
\eeas
Thus, 
\beas
\langle \partial_t \beta_t + D\beta_t,v_t \rangle_{\tsp_{\mu_t}} &=& \underbrace{2 \int \partial_{ijkk} v^j v^i d \mu}_{:=B.1} +\underbrace{2\int \partial_j \log \mu \,\partial_{ikk}v^j v^i d \mu}_{:=B.2} \\ &+&  \underbrace{2 \int \partial_{ij} \log \mu \,\partial_{kk}v^j v^i d\mu }_{:=B.3}  
+ \underbrace{4\int \partial_{ijk}\log \mu \, \partial_k v^j v^i d\mu}_{:=B.4} \\&+& \underbrace{4\int \partial_{kj}\log \mu \, \partial_{ik} v^j v^i d\mu}_{:=B.5}
 + \underbrace{2 \int \partial_{jkk}\log \mu \, \partial_i v^j v^i d\mu}_{:=B.6} 
\eeas
Finally we define 
\beas
C.1&=& \int (Dv \cdot Dv)_{ij} \, \partial_{ij} \log \mu d \mu\\
C.2&=& \int |\nabla \divg (v)|^2 d\mu \\
C.3&=& \int |D v \cdot \nabla \log \mu|^2 d\mu \\
C.4&=& \int \langle \nabla \divg (v) , Dv \cdot \nabla \log \mu \rangle d\mu,
\eeas
 where we denote by $Dv\cdot Dv$ the usual matrix product.
In the following lines we perform a series of Integration by parts, which are all justified by point (iii) of the hypothesis. We first integrate twice B.1 by parts.
\beas
B.1 &=& 2 \int \partial_{jk} v^j \partial_{ik}v^i d\mu + 2 \int \partial_{jk} v^k \partial_{i} v^i \partial_k \mu \, d\vol + 2 \int \partial_{jk} v^j \partial_{k} v^i \partial_i \mu \, d\vol \\
&+& 2 \int \partial_{jk} v^j  v^i \partial_{ik} \mu \, d\vol
\eeas
Next, we integrate the second term once again by parts and obtain
\bes
 2 \int \partial_{jk} v^k \partial_{i} v^i \partial_k \mu \, d \vol = - 2 \int \partial_{jk} v^k  v^i \partial_{ik}  \mu \, d \vol -2 \int \partial_{ijk} v^k  v^i \partial_{k}  \mu \, d \vol 
 \ees
Plugging this back, we get
\beas
B.1&=&2 \int \partial_{jk} v^j \partial_{ik}v^i d\mu  + 2 \int \partial_{jk} v^j \partial_{k} v^i \partial_i \mu \, d\vol 
-2\int \partial_{ijk} v^j  v^i \partial_{k} \mu \, d\vol \\
&=& \underbrace{2\int \partial_{jk} v^j \partial_{ik}v^i d\mu}_{=2 C.2}  + \underbrace{2 \int \partial_{jk} v^j \partial_{k} v^i \partial_i \log \mu \, d\mu}_{= 2 C.4}  -2\int \partial_{ijk} v^j  v^i \partial_{k} \log \mu \, d\mu \\
&=& 2 C.2  + 2 C.4 
\underbrace{-2\int \partial_{ijj} v^k  v^i \partial_{k} \log \mu \, d\mu}_{= - B.2 },
\eeas
where the last identity is obtained relabeling $j$ with $k$ and viceversa.
 Thus, $\sum_{i=1}^6 B.i = 2C.2+2C.4 + \sum_{i=3}^6 B.i$
 Now, let us integrate $A.3$ once by parts
 \beas
 A.3 = 2\int \partial_{kj} \log \mu \,\partial_k \mu \, \partial_{i} v^j v^i d \vol &=& \underbrace{- 2\int \partial_{jkk} \log \mu\, \partial_{i} v^j v^i d \mu}_{=-B.6} \\
 &{}& \underbrace{- 2\int \partial_{kj} \log \mu \partial_{i} v^j \partial_k v^i d \mu}_{=-2C.1} \\
 &{}&\underbrace{- 2\int \partial_{jk} \log \mu \partial_{ik} v^j v^i d \mu}_{=-\frac{1}{2}B.5}.
 \eeas  
 Thus, \be\label{eq:fishconv1}A.3 +\sum_{i=1}^6B.i = -2C.1 + 2C.2 + 2C.4 + B.3+ B.4+\frac{1}{2}B.5. \ee
 Next, we observe that, using the product's rule and exchanging $j$ and $k$, we have $A.1 = A.1.1+ B.3$, where 
 \bes
 A.1.1:= 2\int \partial_j \log \mu \,\partial_{ijk}v^k \, v^i\, d \mu 
 \ees
 Let us now turn to A.2. Using the product rule, and exchanging $j$ with $k$:
 \bes
 A.2 = \underbrace{4 \int \partial_{ij} \log \mu \partial_k \log \mu \partial_j v^k v^i \, d\mu}_{:= A.2.1} \underbrace{+2 \int \partial_{j} \log \mu \partial_k \log \mu \partial_{ij} v^k v^i \,d\mu }_{:=A.2.2}
 \ees
 Using that $\partial_k \log \mu d \mu =\partial_k \mu d \vol $, we integrate A.2.2. by parts
 \beas
 A.2.2 &=& \underbrace{- 2\int \partial_{jk} \log \mu \partial_{ij}v^k v^i d\mu}_{=-\frac{1}{2}B.5} \underbrace{-2\int \partial_{ijk}v^k \partial_j \log \mu v^i \,d \mu}_{= -A.1.1} \\
 &{}& \underbrace{ -2\int \partial_j \log \mu \partial_{ij}v^k \partial_k v^i d\mu}_{:=A.2.2.1}
 \eeas
 Therefore $A.1+A.2=B.3-\frac{1}{2}B.5 + A.2.1+A.2.2.1$. Combining this with \eqref{eq:fishconv1} we get
 \[ \sum_{i=1}^3A.i + \sum_{i=1}^6 B.i = -2C.1+2C.2+2C.4+2B.3+B.4 +A.2.1 +A.2.2.1 . \]
Let us now integrate A.2.2.1 by parts.
\beas
A.2.2.1&=& \underbrace{2 \int \partial_{ij}\log \mu \, \partial_j v^k \partial_k v^i d\mu}_{=2C.1} + \underbrace{2 \int \partial_{j}\log \mu \partial_j v^k \partial_{ik} v^i d\mu}_{=2C.4} \\
&+& \underbrace{2 \int \partial_{j}\log \mu \, \partial_j v^k \partial_k v^i \partial_i \log \mu \, d\mu}_{=2C.3}  
\eeas
Thus, $\sum_{i=1}^3A.i + \sum_{i=1}^6 B.i = 2C.2+2C.3+4C.4+2B.3+B.4 +A.2.1 $.
Lastly, we shall integrate $B.4$ by parts. We get
\beas B.4 &=& \underbrace{- 4 \int \partial_{ij}\log \mu \partial_{kk} v^j v^i d\mu}_{=-2 B.3} \underbrace{-4 \int \partial_{ij} \log \mu \partial_{k} v^j \partial_k v^i d\mu }_{= -4C.1 }\\
&{}& \underbrace{-4 \int \partial_{ij} \log \mu \partial_{k} v^j v^i \partial_k \log\mu  d\mu}_{=-A.2.1}.
\eeas
Hence, we can conclude that $\sum_{i=1}^3A.i + \sum_{i=1}^6 B.i = -4 C.1 + 2C.2+2C.3+4C.4 $. It is an easy calculation to see that this is indeed the right hand side of \eqref{eq:covdevfish1}. Since 
\[ \sum_{i=1}^3A.i + \sum_{i=1}^6 B.i= \langle\partial_t \beta_t + D\beta_t,v_t \rangle_{\tsp_{\mu_t}}+\langle\partial_t \beta_t + D\beta_t,v_t \rangle_{\tsp_{\mu_t}}= \langle\frac{\bD}{dt}\otgrad \scrI(\mu_t) ,v_t \rangle_{\tsp_{\mu_t}},\]
the Lemma is proven for the case when $U=0$. The general case follows observing that $\otgrad \fishu (\mu_t) = \otgrad \scrI(\mu_t) + \nabla \scrU $. The conclusion then follows with an easy calculation. 
\end{proof}
In the next Lemma we establish log-concavity of $\mu_t$.
\begin{lemma}\label{lm:logmix}
For all $0\leq t \leq 1$, $\mu_t$ is a log-concave measure
\end{lemma}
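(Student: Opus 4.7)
My plan is to express $\mu_t$ as a mixture of time-$t$ marginals of bridges and then invoke Pr\'ekopa's theorem on preservation of log-concavity under marginalisation. From the bridge decomposition \eqref{eq:bridgedec},
\[
\mu_t(dz) \;=\; \left(\int_{\RD \times \RD} p^{xy}_t(z)\, \hat\pi(dx\,dy)\right) d\vol(z),
\]
where $p^{xy}_t$ is the density of $X_t$ under the bridge measure $\bP^{xy}$. Because $\hat\pi$ is log-concave by hypothesis (iv), the statement reduces to showing that the map $(x,y,z)\mapsto p^{xy}_t(z)$ is jointly log-concave on $\RD \times \RD \times \RD$: once this is established, $(x,y,z)\mapsto p^{xy}_t(z)\, \hat\pi(x,y)$ is log-concave as a product of log-concave functions, and Pr\'ekopa's theorem immediately gives log-concavity of its $z$-marginal, which is $\mu_t$.

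The joint log-concavity of the bridge density is the substantive step. The natural starting point is the Doob representation
\[
p^{xy}_t(z) \;=\; \frac{p_t(x,z)\, p_{1-t}(z,y)}{p_1(x,y)},
\]
where $p_s$ is the transition density of the Langevin dynamics. In the stationary Ornstein--Uhlenbeck setting $U(z) = \tfrac{\alpha}{2}|z|^2$ all three factors are explicit Gaussians, and a direct computation of the joint covariance of $(X_0, X_t, X_1)$ shows that $-\log p^{xy}_t(z)$ is a positive semi-definite quadratic form in $(x,y,z)$; this is the archetypal example highlighted right after the statement of Theorem \ref{thm:fishdecay}. For a general smooth potential $U$ with $\scrU$ uniformly convex (hypothesis (v)), joint log-concavity can be obtained from a Brascamp--Lieb style analysis of the Feynman--Kac semigroup with multiplicative potential $\scrU$, which is precisely the semigroup governing $\tf_t$ and $\tg_t$ in \eqref{def:tftg}; bridge estimates of this flavour under $\mathbf{Hess}\,\scrU \geq \alpha^2 \id$ have been developed in \cite{conforti2016gradient}, and the key point is that the Hessian of $-\log p^{xy}_t$ in $(x,y,z)$ can be controlled by the Brascamp--Lieb lower bound applied to the potential $\scrU$.

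I expect the main obstacle to be producing a clean Brascamp--Lieb argument for joint log-concavity of $(x,y,z)\mapsto p^{xy}_t(z)$ in the non-Gaussian case; the unsigned $-\log p_1(x,y)$ term in the Doob formula makes a naive application fail, and one has to exploit the positivity of the mixed second derivatives coming from the two forward factors. An alternative route that partially sidesteps this difficulty uses the factorisation $\mu_t = \tf_t \tg_t$ from Lemma \ref{lm:speed}: since $\tf_t$ and $\tg_t$ solve forward/backward parabolic equations of Feynman--Kac type with potential $\scrU$, and such propagators preserve log-concavity whenever $\scrU$ is convex, this strategy reduces the problem to verifying log-concavity of the boundary data $\tf_0, \tg_1$, which can in turn be read off hypothesis (iv) through the representation $\hat\pi(dx\,dy) = f(x) g(y)\, (X_0, X_1)_\# \bP(dx\,dy)$ supplied by the Schr\"odinger system.
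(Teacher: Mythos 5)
Your overall strategy coincides with the paper's: decompose $\mu_t$ via \eqref{eq:bridgedec} as a mixture of bridge marginals weighted by $\hat\pi$, and use convexity of $\scrU$ together with log-concavity of $\hat\pi$. You are also more careful than the paper's own wording on one point: the paper's closing claim that a ``log-concave mixture of log-concave measures is log-concave'' is, taken literally, false --- one genuinely needs \emph{joint} log-concavity of $(x,y,z) \mapsto p^{xy}_t(z)\,\hat\pi(x,y)$ so that Pr\'ekopa's theorem can be applied to the $z$-marginal. You identify this correctly.

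The problem is that your proposal does not actually establish the joint log-concavity of $(x,y,z)\mapsto p^{xy}_t(z)$; you verify it by hand in the Ornstein--Uhlenbeck case, sketch a Brascamp--Lieb route for the general case, and then explicitly concede that you do not see how to handle the $-\log p_1(x,y)$ term in the Doob formula. That is precisely the hard step, and it is where the proposal stops. The paper does not re-derive this fact: it cites \cite[Thm 2.1]{conforti2016fluctuations}, which is the ``bridge estimate of this flavour'' you allude to, and which supplies the log-concavity of the bridge law $\bP^{xy}(X_t\in\cdot)$ under $\mathbf{Hess}\,\scrU\geq\alpha^2\,\id$. So the paper's proof is shorter but rests on an external result, while your outline correctly isolates what must be proved (and even spots an imprecision in how the paper phrases the final deduction) but leaves that key ingredient unproved. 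Your alternative factorisation $\mu_t=\tf_t\tg_t$ with Feynman--Kac propagation is also a reasonable idea, but the claim that such propagators ``preserve log-concavity whenever $\scrU$ is convex'' is again exactly the sort of Brascamp--Lieb statement that needs a proof and is not standard folklore; as written it shifts the gap rather than closing it. To complete the argument you would either have to carry out the infinite-dimensional Pr\'ekopa/Brascamp--Lieb argument for the Feynman--Kac kernel in full, or invoke the result of \cite{conforti2016fluctuations} as the paper does.
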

\begin{proof}
From \eqref{eq:bridgedec} we have:
\bes
\mu_t(\cdot) = \int_{\RD \times \RD} \bP^{xy}(X_t \in \cdot) \hat{\pi}(dxdy),
\ees
where $\bP^{xy}$ is the bridge of $\bP$ between $x$ and $y$. It has been show at \cite[Thm 2.1]{conforti2016fluctuations} that if $\scrU$ is convex, $\bP^{xy}(X_t \in \cdot) $ is a log-concave distribution for all $x,y$. Thus, because of point $(iv)$ of the hypothesis, $\mu_t$ is a log-concave mixture of log-concave measures, and is therefore log-concave itself. 
\end{proof}

We can now proceed to the proof of the Theorem.
\begin{proof}
Since the hypothesis of Theorem \ref{thm:2ndorderODEb} hold we can differentiate once in time the Fisher information to get
\[ \partial_t \fishu(\mu_t) = \langle \otgrad \fishu(\mu_t),v_t \rangle_{\tsp_{\mu_t}}. \]
Using Lemma \ref{lm:covdevfish} in combination with Theorem \ref{thm:2ndorderODEb} and the compatibility with the metric we get
\be\label{eq:fishclaim1} \partial_{tt}  \fishu(\mu_t) = \langle \frac{\bD}{dt} \otgrad\fishu(\mu_t),v_t\rangle_{\tsp_{\mu_t}} + \frac{1}{8}|\otgrad\fishu(\mu_t) |^2_{\tsp_{\mu_t}}.\ee
We know from Lemma \ref{lm:logmix} that $\mu_t$ is a log concave measure. Therefore, using Schur's product Theorem \cite[Th 7.5.3]{horn2012matrix}in the second term of the rhs of \eqref{eq:covdevfish1}  we get
\bes \langle \frac{\bD}{dt} \otgrad\fishu(\mu_t),v_t\rangle_{\tsp_{\mu_t}} \geq 8 \langle v_t, \mathbf{Hess} \scrU \cdot v_t\rangle_{\tsp_{\mu_t}} \geq 8 \alpha^2 |v_t|^2_{\tsp_{\mu_t}},\ees
since $\scrU$ is $\alpha^2$-convex.
Plugging this back into \eqref{eq:fishclaim1} we get 
\bes
\partial_{tt}  \fishu(\mu_t) \geq  \frac{1}{8}|\otgrad\fish(\mu_t) |^2_{\tsp_{\mu_t}} + 8 \alpha^2 |v_t|^2_{\tsp_{\mu_t}},
\ees
which proves the desired convexity and \eqref{eq:fishsecder}.
\end{proof}
\subsection*{Proof of Theorem \ref{thm:FK}}
\begin{proof}
From the Feynman-Kac formula we have that $f_t,g_t$ solve
\[ \partial_t f_t(x) = \frac{1}{2} \Delta f_t(x) - V f_t(x), \quad \partial_t g_t = -\frac{1}{2}\Delta g_t + Vg_t  \]
on any $D_{\veps}$. By taking logarithms, and using the positivity of $f_t,g_t$ we get
\bea\label{eq:FKHJB}\nonumber  \partial_t \log f_t(x) = \frac{1}{2} \Delta \log f_t(x) + \frac{1}{2} |\nabla \log f_t|^2- V \\  \partial_t \log g_t(x) = -\frac{1}{2} \Delta \log g_t(x) - \frac{1}{2} |\nabla \log g_t|^2+ V \eea

One can check directly as in the proof of Theorem \ref{thm:2ndorderODE} that $\frac{1}{2} \nabla (\log f_t - \log g_t) $ solves the continuity equation in the classical sense. Assumption \eqref{eq:FK2} ensures enough integrability to conclude that this vector field is actually the velocity field of $(\mu_t)$. Moreover, \eqref{eq:FK2} ensures that $(\mu_t)$ is a regular curve. If we replace \eqref{eq:HJBt} with \eqref{eq:FKHJB} and use the same argument as in the proof of Theorem \ref{thm:2ndorderODE} we get
\bes
\partial_t v_t + \frac{1}{2} \nabla |v_t|^2 = \otgrad (\frac{1}{8} \scrI + \scrE_K ).
\ees
Then, using  \eqref{eq:FK1} and Lemma \ref{covident}, we get that the covariant derivative exists and it has the desired form.
\end{proof}

\section*{Acknowledgments}
The author acknowledges support from CEMPI Lille and the University of Lille 1. He also wishes to thank Christian L\'eonard for having introduced him to the Schr\"odinger problem, and for many fruitful discussions.

\section{Appendix}
The following Lemma has been used in the proof of Theorem \ref{thm:entropybound}. Here, we denote $\dot{f},\ddot{f}$ the first and second derivatives of a function on the real line.
\begin{lemma}\label{lm:diffineq}
	Let $\phi:[0,1]\rightarrow \R$ be twice differentiable on $(0,1)$ and continuous on $ [0,1] $. 
	\begin{enumerate}[(i)]
	\item If $\ddot{\phi}_t \geq \lambda \dot{\phi}_t$ for all $t \in (0,1)$, then
	 \be\label{eq:diffineq1} \forall t \in [0,1], \quad \phi_t \leq \phi_1 + (\phi_0 - \phi_1 ) \frac{1-\exp(-\lambda (1-t) )}{1-\exp(-\lambda)}.\ee
	\item If $\ddot{\phi}_t \geq - \lambda \dot{\phi}_t$ for all $t \in (0,1)$, then
	 \[ \forall t \in [0,1], \quad \phi_t \leq \phi_0 + (\phi_1 - \phi_0 ) \frac{1-\exp(-\lambda t )}{1-\exp(-\lambda)}. \]
	\end{enumerate}
\end{lemma}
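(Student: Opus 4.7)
The plan is to reduce each inequality to a weak maximum principle for the linear second-order operator $Lu := \ddot u - \lambda \dot u$, after subtracting an explicit solution of the homogeneous ODE $Lu = 0$ that matches the boundary values.

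For part (i), consider the function
\[ \psi_t := \phi_1 + (\phi_0 - \phi_1)\,\frac{1 - e^{-\lambda(1-t)}}{1 - e^{-\lambda}}, \]
which is the right-hand side of \eqref{eq:diffineq1}. A direct computation gives $\dot\psi_t = -\lambda (\phi_0-\phi_1)(1-e^{-\lambda})^{-1} e^{-\lambda(1-t)}$ and then $\ddot\psi_t = \lambda \dot\psi_t$ on $(0,1)$, together with $\psi_0 = \phi_0$ and $\psi_1 = \phi_1$. Setting $\eta_t := \phi_t - \psi_t$, the function $\eta$ is continuous on $[0,1]$, twice differentiable on $(0,1)$, vanishes at both endpoints, and satisfies $L\eta_t \geq 0$ on $(0,1)$. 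The claim reduces to showing $\eta_t \leq 0$ for all $t \in [0,1]$.

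The heart of the argument, and the only delicate step, is the weak maximum principle for $L$. A naive attempt to derive a contradiction at an interior positive maximum $t^{\ast}$ of $\eta$ fails: one only obtains $\dot\eta(t^{\ast}) = 0$ and $\ddot\eta(t^{\ast}) \leq 0$, hence $L\eta(t^{\ast}) \leq 0$, which is compatible with the hypothesis. The standard remedy is to pass to a strict subsolution. Choose $\mu > \max(0,\lambda)$, so that $L(e^{\mu t}) = \mu(\mu - \lambda) e^{\mu t} > 0$ on $[0,1]$, and set $\eta^{\varepsilon}_t := \eta_t + \varepsilon e^{\mu t}$ for $\varepsilon > 0$. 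Then $L\eta^{\varepsilon} > 0$ strictly on $(0,1)$, so the continuous function $\eta^{\varepsilon}$ on $[0,1]$ cannot attain its maximum at an interior point; consequently $\eta^{\varepsilon}_t \leq \max(\eta^{\varepsilon}_0, \eta^{\varepsilon}_1) = \varepsilon \max(1, e^{\mu})$ for every $t \in [0,1]$. Letting $\varepsilon \downarrow 0$ gives $\eta_t \leq 0$, which is the desired bound.

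Part (ii) follows immediately from part (i) by the time reversal $\tilde\phi_s := \phi_{1-s}$: the hypothesis of (ii) gives $\ddot{\tilde\phi}_s - \lambda \dot{\tilde\phi}_s = \ddot\phi_{1-s} + \lambda \dot\phi_{1-s} \geq 0$, so applying (i) to $\tilde\phi$ and relabelling $t = 1-s$ produces exactly the claimed inequality. Apart from the perturbation trick in the maximum principle, the remaining work is the verification of the explicit formula for $\psi$ and the bookkeeping of its boundary values.
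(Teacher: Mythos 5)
Your proof is correct, but it takes a genuinely different route from the paper's. Both arguments start identically: subtract the explicit solution $\psi$ of $\ddot g = \lambda\dot g$ with matching boundary values, reducing to showing $h := \phi - \psi \leq 0$ on $[0,1]$ given $L h := \ddot h - \lambda\dot h \geq 0$ and $h_0 = h_1 = 0$. From there you invoke the weak maximum principle for $L$, handling the degeneracy (non-strict subsolution) by the standard perturbation $\eta + \varepsilon e^{\mu t}$ with $\mu > \max(0,\lambda)$, observing that a strict subsolution cannot have an interior maximum, and letting $\varepsilon\downarrow 0$. The paper instead integrates the differential inequality directly: it introduces $u_t := e^{-\lambda t}\dot h_t$, notes $\dot u_t = e^{-\lambda t}(\ddot h_t - \lambda\dot h_t)\geq 0$ so $u$ is nondecreasing, and deduces that once $\dot h$ becomes positive it stays positive, forcing $h_1 > 0$ if $h$ were ever positive — a contradiction. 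Your approach is the textbook elliptic maximum principle argument, transferable verbatim to more general second-order operators; the paper's is a one-dimensional first-integral trick, slightly more self-contained but specific to ODEs. Both are elementary and of comparable length, and your time-reversal reduction of (ii) to (i) matches the paper's exactly.
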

Note that the rhs of \eqref{eq:diffineq1} rewrites nicely as $ \frac{\exp(\lambda) - \exp(\lambda t)}{\exp(\lambda) -1} \phi_0 + \frac{\exp(\lambda t) - 1}{\exp(\lambda) -1} \phi_1$.
\begin{proof}
We prove only $(i)$, as $(ii)$ follows from $(i)$ with a simple time-reversal argument.
Let $g$ be the unique solution of the differential equation
\begin{align}\label{gio02}
\ddot{g}_t= \lambda \dot{g}_t,\ 0<t<1,\quad g_0=\phi_0,\ g_1=\phi_1.
\end{align}
 All we have to show is: $h:= \phi-g\le 0,$ because  a direct calculation shows that the solution of \eqref{gio02} is
$ \displaystyle{
g_t= \phi_1 + (\phi_0 - \phi_1 ) \frac{1-\exp(-\lambda (1-t) )}{1-\exp(-\lambda)}.
}$
\\
We see that  $\ddot{h}_t\ge \lambda \dot{h}_t, \ 0<t<1$ with $h_0=h_1=0.$
Considering the function  $u_t:=e ^{ -\lambda t}\dot{h}_t,$ $0\le  t\le 1,$ we have $\dot{u}_t=e^{ -\lambda t}[\ddot{h}_t-\lambda \dot{h}_t]\ge 0,$ which implies that $u$ is increasing, that is:
\begin{align}\label{gio01}
\dot{h}_t\ge \dot{h}_{t^*} e ^{ \lambda(t-t_*)},\qquad 0\le  t_*\le t\le 1.
\end{align}
Suppose ad absurdum that $h_{t_o}>0$ for some $0<t_o<1.$ As $h_0=0,$ there exists some $0< t_*\le t_o$ such that $h_{t_*}>0$ and $\dot{h}_{t_*}>0.$ In view of \eqref{gio01}, this implies that $h$ is increasing on $[t_*,1].$ In particular, $h_{1}\ge h_{t_*}>0,$ contradicting $h_1=0.$ Hence $h\le 0.$ 
\end{proof}

\subsection{Hessian of the Entropy and gradient of the Fisher information} 
\subsubsection*{Hessian of the entropy} In this paragraph we make some formal computations, whose aim is to give an explanation for equation \eqref{eq:entconv}. We assume $U=0$ for simplicity. Let $\mu \in \cinfcp$ and $\nabla \varphi \in \cinf^c $ be fixed. We consider  the constant speed geodesic $(\mu_t)$  such that $\mu_0 = \mu$ and $v_0 = \nabla \varphi$. Then, by definition
\[  \hess \ent (\nabla \varphi) = \frac{\bD}{dt} \otgrad \ent (\mu_t)\big|_{t=0}. \]
Using the identification of the covariant derivative at Lemma \ref{covident} and \eqref{eq:entgrad} we have that
\[ \hess \ent (\nabla \varphi) =  \partial_t \nabla \log \mu_t + \levciv_{v_t} \nabla \log \mu_t \big|_{t=0} \, . \]
Using the continuity equation 
\beas \partial_t \nabla \log \mu_t &=&  -\nabla (\frac{1}{\mu_t} \nabla \cdot (\mu_t v_t) ) \\
&=& -\nabla (\mathbf{div}(v_t) ) -\nabla \langle \nabla \log \mu_t,v_t\rangle .
\eeas
Evaluating at $t=0$ and using $v_0=\nabla\varphi$, we can rewrite the latter as
\[ -\nabla \Delta \varphi - \mathbf{Hess} \log \mu (\nabla \varphi) - \mathbf{Hess} \, \varphi( \nabla \log \mu). \]
Therefore, observing that $\levciv_{v_t} \nabla \log \mu_t \big|_{t=0} = \mathbf{Hess} \log \mu (\nabla \varphi)$, we arrive at 
\[\hess \ent (\nabla \varphi) =   -\nabla \Delta \varphi - \mathbf{Hess} \, \varphi( \nabla \log \mu) .\]
Hence, using an integration by parts:
\beas \langle \hess \ent (\nabla \varphi), \nabla \varphi \rangle_{\tsp_{\mu_t}} &=& - \int_M   \langle \nabla \varphi, \nabla \Delta \varphi \rangle d\mu - \int_M \langle \nabla \varphi, \mathbf{Hess} \varphi (\nabla \log \mu)\rangle d \mu \\
&=& - \int_M   \langle \nabla \varphi, \nabla \Delta \varphi \rangle d\mu - \int_M \langle \nabla  \mu, \mathbf{Hess} \varphi (\nabla \varphi)
\rangle d \vol  \\
&=& -\int_M \langle \nabla \varphi, \nabla \Delta \varphi \rangle d\mu - \frac{1}{2}\int_M \langle \nabla  \mu, \nabla |\nabla \varphi|^2 
\rangle d \vol \\
&=& \int_M   \frac{1}{2} \Delta |\nabla \varphi|^2- \langle \nabla \varphi, \nabla \Delta \varphi \rangle   d \mu .
\eeas
At this point one can use the Bochner-Weitzenb\"ock formula
\[ \frac{1}{2} \Delta |\nabla \varphi|^2 = \langle \nabla \varphi, \nabla \Delta \varphi   \rangle + |\mathbf{Hess} \varphi|^2_{\text{HS}} + \ric(\nabla \varphi, \nabla \varphi) \]
and the hypothesis \eqref{eq:Bakem} to obtain the conclusion. 
\subsubsection*{Gradient of the Fisher information} In this section, we shall make some formal computations to justify \eqref{eq:fishgrad}. As we did before, we assume $U=0$ for simplicity. Differentiating the relation \eqref{eq:fishent} and using the definition of Hessian we get
\[ \otgrad \scrI(\mu) = 2\, \hess_{\mu} \scrH (\otgrad \scrH(\mu)) \]
By the definition of Hessian 
\[ \hess_{\mu} (\otgrad \scrH(\mu) ) = \frac{\bD}{dt} \otgrad \scrH(\mu_t) \Big|_{t=0} ,\]
where $(\mu_t)$ is any regular enough curve such that $\mu_0=\mu$, $v_0=\otgrad \scrH(\mu)$.
From Lemma \ref{covident} such covariant is the projection on the space of gradient vector fields of
\bes
\partial_t \otgrad \scrH(\mu_t) + \levciv_{v_t} \otgrad \scrH(\mu_t) \ees
Using the continuity equation in the form $\partial_t \log \mu_t = - \nabla \cdot v_t - v_t \cdot \nabla \log \mu_t$, and recalling that $\otgrad \scrH(\mu_t) = \nabla \log (\mu_t)$ we arrive at
\beas \partial_t \otgrad \scrH(\mu_t) \Big|_{t=0} &=& - \nabla (\nabla \cdot v_t + v_t \cdot \nabla \log \mu_t ) \Big|_{t=0} \\
&=& - \nabla \big(\nabla \cdot (\nabla \log \mu)\big) - \nabla( |\nabla \log \mu|^2 )  \\
&=&- \nabla \Delta \log \mu - \nabla |\nabla \log \mu|^2.
\eeas
On the other hand
\[\levciv_{v_t} \otgrad \scrH(\mu_t) \Big|_{t=0} = \mathbf{Hess} \log (\mu_t)(v_t) \Big|_{t=0} =  \mathbf{Hess} \log (\mu)(\nabla \log \mu) =  \frac{1}{2} \nabla |\nabla \log \mu|^2. \]
Therefore
\[ \partial_t \otgrad \scrH(\mu_t) + \levciv_{v_t} \otgrad \scrH(\mu_t) \Big|_{t=0} =- \nabla \Delta \log \mu - \frac{1}{2}\nabla |\nabla \log \mu|^2,\]
and since the rhs of this vector field is of gradient type,
\[\otgrad \scrI (\mu) = 2\frac{\bD}{dt} \otgrad \scrH(\mu_t) \Big|_{t=0} =- 2\nabla \Delta \log \mu - \nabla |\nabla \log \mu|^2  ,\]
which is \eqref{eq:fishgrad}.

\subsection{Lemmas \ref{lm:logest} and \ref{lm:compactcase}}
These Lemmas are needed in the proof of Theorem \ref{thm:2ndorderODE} and \ref{thm:2ndorderODEb}.
\begin{lemma}\label{lm:logest}
Let Assumption \ref{ref:mainass} (B) hold. Then $\entcost(\mu,\nu)<+\infty$ and the dual representation \eqref{eq:fgdec} holds. Moreover 
\begin{enumerate}[(i)]	
\item $f$ and $g$ are compactly supported and $f_t,g_t$ globally bounded on $[0,1] \times \RD$.
\item For any $1 \leq l \leq 3 $ and $\varepsilon \in (0,1)$, there exist constants $A_{l,\varepsilon},B_{l,\varepsilon}$ such that 
\[\forall x \in M  \sup_{\substack{ \\ t \in [\varepsilon ,1-\varepsilon] } } \sup_{ \substack{v_1,..,v_l \in \RD \\ |v_1|,..,|v_l| \leq 1}}  |\partial_{v_l} \ldots \partial_{v_1} \log f_t(x)| \leq A_{l,\varepsilon}+B_{l,\varepsilon}|x|^l , \]
and the same conclusion holds replacing $f_t$ by $g_t$.
\item For any $\varepsilon \in (0,1)$ there exists a constant $A_{2,\varepsilon}$ such that
\[ \sup_{x \in\RD, t \in [\varepsilon ,  1-\varepsilon]}  \sup_{\substack{v_2,v_1 \in \RD\\|v_2|,|v_1|\leq 1}} | \partial_{v_2}\partial_{v_1} \log f_t(x) |  \leq  A_{2,\varepsilon}, \]
\end{enumerate}
and the same conclusion holds replacing $f_t$ by $g_t$.
\end{lemma}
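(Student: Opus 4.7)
My plan exploits the fact that under Assumption~\ref{ref:mainass}(B) the reference law $\bP$ is the stationary Ornstein--Uhlenbeck process on $\RD$, so conditional expectations under $\bP$ can be made explicit via Mehler's formula and all derivative bounds reduce to moment estimates for a compactly supported auxiliary measure.

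For the \emph{finiteness of $\entcost$ and the existence of the dual representation} \eqref{eq:fgdec}, I would test the variational problem against the product coupling $\pi := \mu \otimes \nu$: since $(X_0,X_1)_{\#}\bP$ is a non-degenerate centered Gaussian on $\R^{2D}$ with smooth, everywhere positive density $r$, the relative entropy $\scrH(\pi\,|\,(X_0,X_1)_{\#}\bP) = \int (\log\mu + \log\nu - \log r)\,d\pi$ is finite because $\pi$ is compactly supported with bounded density while $-\log r$ is locally quadratic. Then \cite[Thm~2.8]{LeoSch} yields nonnegative measurable $f,g$ realising \eqref{eq:fgdec}. From the marginal identity $\mu = f\cdot g_0 \cdot \inv$, together with the fact that $\inv$ and $g_0(x) = \int g(y)(2\pi b_1^2)^{-D/2}\exp(-|y-a_1 x|^2/(2b_1^2))\,dy$ are both strictly positive on $\RD$ (the latter because $g\not\equiv 0$ and the Gaussian kernel is positive), one gets $\supp f \subseteq \supp \mu$, which is compact, and $f = \mu/(\inv\cdot g_0)$ is the quotient of a bounded function by a continuous strictly positive one on this support, hence bounded. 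The symmetric argument handles $g$. By Jensen's inequality, $\|f_t\|_{\infty} \leq \|f\|_{\infty}$ and $\|g_t\|_{\infty} \leq \|g\|_{\infty}$, which proves (i).

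The heart of the proof is (ii)--(iii). Reversibility of the stationary OU gives the clean identification $f_t = P_t f$ and $g_t = P_{1-t}g$, with $(P_s)$ the OU semigroup, and Mehler's formula reads, for $a_t := e^{-\sigma\alpha t}$, $b_t^2 := (1-e^{-2\sigma\alpha t})/(2\alpha)$,
\[
f_t(x) = (2\pi b_t^2)^{-D/2}\int f(y)\,e^{-|y - a_t x|^2/(2b_t^2)}\,dy.
\]
Let $\mu^t_x$ be the probability measure proportional to $f(y)\,e^{-|y-a_t x|^2/(2b_t^2)}\,dy$, well defined since $f\geq 0$, $f\not\equiv 0$. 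A direct computation of logarithmic derivatives gives
\[
\partial_{x_i}\log f_t(x) = \tfrac{a_t}{b_t^2}\bigl(\bar y_i(x) - a_t x_i\bigr),\qquad \partial^2_{x_i x_j}\log f_t(x) = \tfrac{a_t^2}{b_t^4}\,\mathrm{Cov}_{\mu^t_x}(Y_i,Y_j) - \tfrac{a_t^2}{b_t^2}\delta_{ij},
\]
where $\bar y(x)$ is the mean of $\mu^t_x$, and the third derivatives take an analogous form involving third cumulants of $\mu^t_x$. Since $\mu^t_x$ is supported in the compact set $\supp f$, all its centered moments are bounded uniformly in $x$ and in $t\in[\varepsilon,1-\varepsilon]$; the only $x$-dependence surviving in the resulting bounds is the linear term $a_t x_i$ appearing in the first derivative. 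This delivers the linear growth bound (ii) for $l=1$, the uniform bound (iii) for $l=2$ (from which the $l=2$ case of (ii) follows trivially), and uniform bounds for $l=3$. Uniformity in $t\in[\varepsilon,1-\varepsilon]$ is ensured by continuity of $t\mapsto a_t/b_t^2$ on this compact interval. The analogous statements for $g_t$ follow by the symmetry $t\leftrightarrow 1-t$ combined with $g_t = P_{1-t}g$.

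The computations are largely routine; the one conceptual point I expect to be pivotal is the reversibility identification $f_t = P_t f$, which turns what looks like a delicate forward-backward conditioning into a plain heat semigroup evaluation and allows Mehler's formula to take over. Without this reduction, the same estimates could be obtained by manipulating explicit bivariate Gaussian densities for $(X_0,X_t)$ under $\bP$, but the proof would become notationally heavier without adding anything new.
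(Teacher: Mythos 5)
Your proof is correct and follows the same overall strategy as the paper: (i) is handled by the marginal identity $d\mu/d\inv = f\, g_0$ together with positivity and smoothness of $g_0$ and boundedness of conditional expectation; (ii)--(iii) are reduced, via reversibility ($f_t = P_t f$) and Mehler's formula, to the fact that $f$ is compactly supported and the coefficients $a_t,b_t$ are bounded away from degeneracy on $[\veps,1-\veps]$. Where you diverge is in the mechanics of the derivative estimates. The paper works in $d=1$, expresses $\partial_x^m p_t(x,z)$ through Hermite polynomials, and then writes $\partial_x^l \log f_t$ as a finite sum of terms $f_t^{-k}\prod_j \int f\,\partial_x^{i_j}p_t\,dz$; the linear growth in $x$ comes from the degree of the Hermite polynomial, and the $l=2$ improvement is obtained by an explicit cancellation reducing to a variance. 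You instead organize the same computation probabilistically: introduce the tilted measure $\mu^t_x \propto f(y)\exp(-|y-a_t x|^2/2b_t^2)\,dy$, observe that the $l$-th logarithmic derivative of $f_t$ is (up to the explicit $-a_t^2 x_i/b_t^2$ term at order $1$) an $l$-th cumulant of $\mu^t_x$, and use that a measure supported in the fixed compact set $\supp f$ has uniformly bounded centered moments. This buys a cleaner, dimension-free presentation and makes the $l=2$ case and statement (iii) manifestly the same estimate (variance bounded by $\mathrm{diam}(\supp f)^2$), rather than something requiring a separate Hermite identity. It also explains structurally why only the $l=1$ bound has the $|x|$ growth. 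Your treatment of $\entcost(\mu,\nu)<+\infty$ by testing against $\mu\otimes\nu$ is a direct argument where the paper invokes \cite[Prop.~2.5]{LeoSch}; both are fine. Two small points worth tightening: the inequality $\|f_t\|_\infty\leq\|f\|_\infty$ needs only monotonicity of conditional expectation, not Jensen; and when you assert $\supp f\subseteq\supp\mu$ and conclude compactness, you should note (as the paper does) that $g_0$ is strictly positive and continuous, so it is bounded below on the compact $\supp\mu$, giving both compact support and boundedness of $f$.
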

\begin{proof}
Since all statements concerning $g$ are proven in the same way as those for $f$, we limit ourselves to prove the latter ones.
In the proof, we assume that $\sigma=1$, the proof for the general case being almost identical. The fact that $\entcost(\mu,\nu)<+\infty$ can be easily settled using point (b) in \cite[Prop. 2.5]{LeoSch}, whereas the dual representation is obtained from \cite[Th 2.8]{leonard2012schrodinger}. Let us show that $f$ is compactly supported. Observe
\bes
\frac{d \mu }{d \inv} = f(x)g_0(x).
\ees
Since $g_0 \in \cinfp$, and $\frac{d\mu}{d\inv}$ is compactly supported, $f$ must have the same support as $ \frac{d\mu}{d\inv}$. Moreover, since $g_0$ is bounded from below on the support of $f$, the fact that $\frac{d\mu}{d\inv}$ is bounded from above, implies that $f$ is bounded from above. It follows from the very definition of $f_t$ that they must be bounded as well. The proof of $(i)$ is complete. We only do the proof of $(ii)$ and $(iii)$ in the case $d=1$. This proof can be extended with no difficulty to the general case. We first make some preliminary observations.  For $\alpha$ fixed, the transition density of the Ornstein-Uhlenbeck semigroup is
\[ p_t(x,z)= \Big( \frac{\gamma(\alpha,t)}{2\pi}\Big)^{-1/2} \phi( \gamma(\alpha,t) (z- \exp(-\alpha t) x ))  \]
where 
\[ \phi(z) = \exp(-\frac{z^2}{2}), \quad \gamma(\alpha,t) = 2\frac{\alpha}{(1-\exp(- 2 \alpha t))  }. \]
The derivatives of $\phi$ can be computed using the Hermite polynomials $(H_m)_{m \geq 0}$. We have 
\[ \forall m\in \N, \quad \phi^{m} (z) =(-1)^m H_m(z) \phi(z). 
\]
Thus, we obtain the following formula for the $m$-th derivative of the transition density w.r.t. $x$:
\be\label{eq:logest3} \partial^m_x p_t(x,z) = (-1)^m \gamma(\alpha,t)^{m} \exp(-m \alpha t) H_m(z-\exp(-\alpha t) x) p_t(x,z) . \ee
Finally, observe that we can rewrite $f_t$ equivalently in the form
\be\label{eq:logest4} f_t(x) = \int_{\R} p_t(x,z)f(z)dz. \ee
Let us now prove (ii). Fix $1 \leq l \leq 3$. Using \eqref{eq:logest4}, we can write $\partial^l_x \log f_t(x)$ as a sum of finitely terms of the form
\[ f_t(x)^{-k} \prod_{j=1}^k \int f(z) \partial^{i_j}_x p_t(x,z) dz \]
where $k \leq l$ and $i_1,..,i_k$ are integers summing up to $l$. Plugging \eqref{eq:logest3} in this expression, the desired conclusion follows using the fact that $H_m$ is a polynomial of degree $m$, $f$ is compactly supported, and $\gamma(\alpha,t)$ is uniformly bounded from above and below for $t \in[\varepsilon,1-\varepsilon]$. To prove (iii), we compute explicitly $\partial^2_x \log f_t(x)$, using \eqref{eq:logest3}:
\beas \exp(-2 \alpha t) \gamma(\alpha,t)^{2}f^{-2}_t(x) \times \\\Big(\int f(z)  H_2(z-\exp(-\alpha t) x) p_t(x,z)dz\, \int f(z) p_t(x,z)dz  -\\
\Big[\int f(z)  H_1(z-\exp(-\alpha t) x) p_t(x,z)dz \Big]^2 \Big). \eeas
Using the explicit form of the first two Hermite polynomials and some standard calculations, the latter expression is seen to be equal to
\beas \exp(-2\alpha t) \gamma(\alpha,t) f_t(x)^{-2} \times  \\
\Big( \int f(z)  z^2 p_t(x,z)dz\, \int f(z) p_t(x,z)dz  -\Big[\int f(z)  z p_t(x,z)dz \Big]^2 \Big).
\eeas
The conclusion then follows using the fact that $f$ is compactly supported and that $\gamma(\alpha,t)$ is uniformly bounded from above and below for $t \in[\varepsilon,1-\varepsilon]$. 
\end{proof}
\begin{lemma}\label{lm:compactcase}
Let Assumption \ref{ref:mainass}(A) hold. Then the dual representation \ref{eq:fgdec} holds and 
\begin{enumerate}[(i)]
\item$\entcost(\mu,\nu) < \infty$ 
\item For any $\veps \in(0,1)$ $f_t,g_t,\tf_t,\tg_t$ are $\cinfcp$ over $D_{\veps}$ and $\log f_t, 
\log g_t, \log \tf_t,\log \tg_t$ are $\cinfc$ over $D_{\veps}$. 
\end{enumerate}
\end{lemma}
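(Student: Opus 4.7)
The plan is to reduce all claims to classical parabolic regularity of the Markov semigroup $(P_t)$ generated by $\scrL$ on the compact manifold $M$, combined with the existence/duality theory for the Schr\"odinger problem.

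First I would dispatch the finiteness of $\entcost$. Testing with the product coupling $\mu\otimes\nu$, the reference measure $(X_0,X_1)_{\#}\bP$ admits the density $p_1(x,y) e^{-2U(x)} e^{-2U(y)}$ with respect to $\vol\otimes\vol$, where $p_1$ is the transition kernel of $\bP$. Since $\scrL$ is uniformly elliptic with smooth coefficients on the compact $M$, classical parabolic theory supplies a two-sided bound $0<c\le p_1\le C<+\infty$; the factor $e^{-2U}$ is bounded above and below as $U\in\cC_\infty(M)$. A direct comparison then gives
\bes
\entcost(\mu,\nu)\le\scrH\bigl(\mu\otimes\nu\,\big|\,(X_0,X_1)_{\#}\bP\bigr)\le\ent(\mu)+\ent(\nu)+\mathrm{const},
\ees
finite by hypothesis. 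With $\entcost(\mu,\nu)<+\infty$ and regular marginals in hand, \cite[Th.~2.8]{LeoSch} yields the dual representation $\hat{\bQ}=f(X_0)g(X_1)\bP$ for measurable $f,g\ge 0$. Stationarity and reversibility of $\bP$ under $\inv$ then identify $f_t=P_t f$ and $g_t=P_{1-t}g$.

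For the regularity claims, compactness of $M$ and smoothness of the coefficients of $\scrL$ yield, for every $\veps\in(0,1)$ and every multi-index $\alpha$,
\bes
0<c_\veps\le p_t(x,y)\le C_\veps<+\infty,\qquad \sup_{(t,x,y)\in D_\veps\times M}\bigl|\partial_x^\alpha p_t(x,y)\bigr|<+\infty.
\ees
Differentiating $f_t(x)=\int_M p_t(x,y)f(y)\,\inv(dy)$ under the integral sign gives smoothness and boundedness of every spatial derivative on $D_\veps$; time derivatives follow from the forward equation $\partial_t f_t=\scrL f_t$. The pointwise lower bound $f_t(x)\ge c_\veps\|f\|_{L^1(\inv)}$, together with $\|f\|_{L^1(\inv)}>0$ (forced by $\hat{\bQ}$ being a probability, which rules out $f\equiv 0$), gives strict positivity of $f_t$ uniformly on $D_\veps$; hence $f_t\in\cinfcp$ there. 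The same argument applies to $g_t$; the tilded versions $\tf_t=e^{-U}f_t$, $\tg_t=e^{-U}g_t$ inherit the conclusion since $e^{-U}\in\cinfcp$ on compact $M$. The two-sided bounds then give $\log f_t,\log g_t,\log\tf_t,\log\tg_t\in\cinfc$ on $D_\veps$ by the chain rule.

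The one delicate point, which I expect to be the main obstacle, is ensuring that $\|f\|_{L^1(\inv)}$ (and symmetrically $\|g\|_{L^1(\inv)}$) is actually \emph{finite}, so that the upper bound $f_t\le C_\veps\|f\|_{L^1(\inv)}$ is nontrivial; this is not automatic from the dual representation alone. I would handle it via the Schr\"odinger system $f=(d\mu/d\inv)/g_0$: once $g_0=P_1 g$ is shown bounded \emph{below} on $M$ (which follows from $\|g\|_{L^1(\inv)}>0$ combined with $p_1\ge c>0$), finiteness of $\|f\|_{L^1(\inv)}$ follows from $\int f\,g_0\,d\inv=1$, and a symmetric argument for $g$ closes the bootstrap. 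Once these $L^1$ controls are in place, every other claim reduces to a routine differentiation under the integral.
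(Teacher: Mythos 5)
Your argument follows essentially the same route as the paper's. The paper too tests with the product coupling $\mu\otimes\nu$, uses that $(X_0,X_1)_{\#}\bP$ has a strictly positive smooth density on the compact $M\times M$ (attributed there to Malliavin calculus rather than written out as heat-kernel bounds), compares relative entropies to get $\entcost(\mu,\nu)<\infty$, invokes the same existence theorem of Léonard for the $fg$ decomposition, and then asserts $f_t,g_t\in\cinfcp$ on $D_{\veps}$ again by Malliavin-calculus smoothing together with compactness. Where you add value: you make the parabolic kernel estimates explicit and carry out the differentiation under the integral, your entropy comparison is in the cleaner additive form $\ent(\mu)+\ent(\nu)+\mathrm{const}$ (the paper's purely multiplicative bound $\scrH(\pi\,|\,(X_0,X_1)_{\#}\bP)\le C\,\scrH(\pi\,|\,\inv\otimes\inv)$ is not literally correct as written, though the conclusion stands), and you flag and justify the finiteness and positivity of $\|f\|_{L^1(\inv)},\|g\|_{L^1(\inv)}$, a point the paper passes over silently but which is genuinely needed both to differentiate under the integral and to bound $f_t$ from below. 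For that last point, a shorter route than your bootstrap is to note that $E_{\bP}[f(X_0)g(X_1)]=1$ together with $c\le p_1\le C$ gives $c\,\|f\|_{L^1(\inv)}\|g\|_{L^1(\inv)}\le 1\le C\,\|f\|_{L^1(\inv)}\|g\|_{L^1(\inv)}$; since neither $f$ nor $g$ is a.e.\ zero, both norms are simultaneously positive and finite.
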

\begin{proof}
Let $\varphi_{\mu} = \frac{d \mu}{d \inv}$,$\varphi_{\nu} = \frac{d \nu}{d \inv}$ and define $\pi\in \Pi(\mu,\nu)$ by $\pi(x,y) = \varphi_{\mu}(x) \varphi_{\nu}(y) \inv \otimes \inv(dx dy)$. The theory of Malliavin calculus ensures that $(X_0,X_1)_{\#} \bP$ is an absolutely continuous measure on $M \times M$ with positive smooth density. Since $M$ is compact, then $(X_0,X_1)_{\#} \bP$ is equivalent to $ \inv \otimes \inv$. Therefore, for some constant $C<+\infty$,
\[ \scrH(\pi | (X_0,X_1)_{\#} \bP) \leq C \, \scrH(\pi | \inv \otimes \inv) = C ( \scrH(\mu | \inv)+\scrH(\nu | \inv))  <+\infty. \]
Thus $\entcost(\mu,\nu)<+\infty$. It is also a result of Malliavin calculus that $f_t,g_t$ are of class $\cC^{+}_{\infty}$ on any $D_{\veps}$. But then, since $M$ is compact, they are also in $\cC^{b,+}_{\infty}$ and uniformly bounded from below, which gives that $\log f_t,\log g_t $ are in $\cC^{b}_{\infty}$. The statement about $\tf_t,\tg_t$ and their logarithms follows from the one for $f_t,g_t$ and the compactness of $M$.
\end{proof}
\subsection*{Proof of Lemma \ref{lm:speed}}

\begin{proof}
We can rewrite \eqref{eq:fgdec} as
\[\frac{d \hQ}{d \bP} = f(X_0) g(X_1). \]
Moreover, since $\bP$ is stationary, we have for any $t$ that ${X_t}_{\#} \bP = \inv$. Therefore
\beas \frac{d\mu_t}{d\inv}(x) = \frac{d({X_t}_{\#} \hQ)}{d({X_t}_{\#} \bP)}(x) &=& E_{\bP}[ f(X_0)g(X_1) | X_t =x ] \\ &\stackrel{\substack{\text{Markov}\\ \text{property}}}{=}& E_{\bP}[f(X_0) |X_t=x] E_{\bP}[g(X_1) |X_t=x] \\
&=& f_t(x)g_t(x).  \eeas
Observing that $d\inv=\exp(-2U) d \vol $, \eqref{eq:fgdect} follows from the definition of $\tf_t$ and $\tg_t$.
To prove the second statement, fix $\varepsilon \in (0,1)$. We observe that, since $U$ is taken to be smooth, the well known results of Malliavin calculus grant that the function $f_t$ and $g_t$ are of class $\cinfp$, and thus classical solutions on $D_{\varepsilon}$ of the forward and backward Kolmogorov equations
\be\label{eq:Kolmogorov} \partial_t f_t = \scrL f_t, \quad \partial_t g_t = - \scrL g_t. \ee
Using some standard algebraic manipulations and the positivity of $f_t,g_t$ one finds that $ \tf_t$ and $ \tg_t$ are classical solutions on $D_{\varepsilon}$ of
\be\label{eq:FKpde} \partial_t \tilde{f}_t = \frac{1}{2} \Delta \tilde{f}_t  - \scrU \tilde{f}_t  , \quad \partial_t \tilde{g}_t =- \frac{1}{2} \Delta \tilde{g}_t + \scrU \tilde{g}_t  =0,  \ee
where $\scrU$ was defined at \eqref{eq:fishgrad}. 
Using this, we prove that $\frac{1}{2}\nabla (\log g_t-\log f_t) = \frac{1}{2}\nabla (\log \tg_t-\log \tf_t)$ is a classical solution to the continuity equation on $D_{\varepsilon}$. Indeed 
 \beas \partial_t \mu_t&\stackrel{\eqref{eq:fgdect}}{=}&  (\partial_t \tilde{f}_t) \tilde{g}_t + \tilde{f}_t (\partial_t \tilde{g}_t) \\
 &\stackrel{\eqref{eq:FKpde}}{=}& \frac{1}{2} \tilde{g}_t \Delta \tilde{f}_t - \frac{1}{2} \tilde{f}_t \Delta \tilde{g}_t \\
 &=& \frac{1}{2} \nabla \cdot (\tilde{g}_t \nabla \tilde{f}_t) - \frac{1}{2} \nabla \cdot (\tilde{f}_t \nabla \tilde{g}_t) \\
 &=& \frac{1}{2} \nabla \cdot \big(\tilde{f}_t\tilde{g}_t( \nabla \log \tilde{f}_t -\nabla \log \tilde{g}_t)\big) \\
 &\stackrel{\eqref{eq:fgdect}}{=}& \frac{1}{2} \nabla \cdot \big(\mu_t( \nabla \log \tilde{f}_t -\nabla \log \tilde{g}_t)\big).
 \eeas
Thus, $(t,x)\mapsto \frac{1}{2}\nabla (\log g_t-\log f_t)$ solves the continuity equation,  it is of gradient type and, thanks to Lemma \ref{lm:speedbound} and \eqref{eq:fgdect},  $ \sup_{t \in [\varepsilon,1-\varepsilon]}\frac{1}{2}| \nabla (\log g_t-\log f_t)|_{\tsp_{\mu_t}} < + \infty$ also holds. The conclusion then follows.
\end{proof}

\subsection*{Proof of Lemma \ref{covident}}
\begin{proof}
Fix $\veps \in (0,1)$. As a preliminary step, we compute $\partial_t \tau^{\veps}_t(\xi_t)$. Using the group property we get
\beas  \tau^{\veps}_{t+h}(\xi_{t+h})-\tau^{\veps}_t(\xi_t) &=& \tau^{\veps}_t \big( \tau^t_{t+h}(\xi_{t+h}) - \xi_t  \big) \\
&=& h \tau^{\veps}_t \big(  \partial_t \xi_t \big)+ \tau^{\veps}_t \big(\tau^t_{t+h}(\xi_{t})- \xi_t  \big) + o(h),
\eeas
where $o(h)/h \rightarrow 0$ as $h \rightarrow 0$. Recalling Definition \ref{def:trmap2} and the definition of flow map we get
\beas
\tau^t_{t+h}(\xi_{t})(x)- \xi_t (x) &=& (\tau_x)^t_{t+h}\big(\xi_t \circ \bT(t,t+h,x)-\xi_t(x)\big)\\
&=& h \levciv_{\partial_t \bT(t,t,x)} \xi_t(x) +o(h) \\
&=&h \levciv_{v_t(x)} \xi_t(x) +o(h) .
\eeas
Therefore, we have shown that, as a pointwise limit
\be\label{eq:covident4} 
\lim_{h \rightarrow 0} \frac{ \tau^t_{t+h} \xi_{t+h} - \xi_t}{h} = \partial_t \xi_t + \levciv_{v_t} \xi_t, \ee
which implies that
\bes 
\partial_t \tau^{\veps}_t(\xi_t) =  \tau^{\veps}_t(\partial_t \xi_t + \levciv_{v_t} \xi_t). \ees
Let us now prove the absolute continuity of $(\xi_t)$ along $(\mu_t)$ using what we have just shown. We have
\beas | \tau^{\veps}_s (\xi_s)  - \tau^{\veps}_t (\xi_t) |_{L^2_{\mu_{\veps}}} &=&\Big( \int_{M} \Big| \int_t^s \tau^{\veps}_r \big( \partial_r \xi_r + \levciv_{v_r} \xi_r  \big)  dr\Big|^2 d \mu_{\veps}\Big)^{\frac{1}{2}}\\
&\stackrel{\text{Jensen}}{\leq}&  (s-t)^{1/2} \Big(  \int_t^s \int_{M} \big| \tau^{\veps}_r \big( \partial_r \xi_r + \levciv_{v_r} \xi_r) \big|^2  d \mu_{\veps} dr\Big)^{\frac{1}{2}} \\
&\stackrel{\eqref{eq:isometry}}{=}& (s-t)^{1/2} \Big(  \int_t^s \int_{M} \big|  \partial_r \xi_r + \frac{1}{2}\levciv_{v_r} \xi_r \big|^2 d \mu_r dr\Big)^{\frac{1}{2}} \\
&\leq & (s-t) \sup_{r \in [\varepsilon,1-\varepsilon]} |\partial_r \xi_r + \levciv_{v_r} \xi_r |_{\tsp_{\mu_r}} .
  \eeas
Using \eqref{eq:covident1}, the desired absolute continuity follows. Let us now turn to the proof of \eqref{eq:covident2}.
 By definition,
\[\frac{\bd}{dt} \xi_t = \lim_{h \downarrow 0} \frac{\tau^t_{t+h}\xi_{t+h} (\cdot)- \xi_t(\cdot)}{h} , \]
where the limit is in $L^2_{\mu_t}$. 
 But then, it is also the pointwise limit along a subsequence. Such computation has been done at \eqref{eq:covident4}, and yields the desired result. The identity \eqref{eq:covident3} is a direct consequence of \eqref{eq:covident2} and the fact that $v_t$ is a gradient vector field.
\end{proof}

\end{document}